\numberwithin{equation}{section}
\setlist[enumerate]{label=(\alph*)}
\setlist[itemize]{leftmargin=20pt}
\theoremstyle{plain}
\newtheorem{theorem}{Theorem}[section]
\newtheorem{proposition}[theorem]{Proposition}
\newtheorem{conjecture}[theorem]{Conjecture}
\newtheorem{lemma}[theorem]{Lemma}
\newtheorem{corollary}[theorem]{Corollary}
\theoremstyle{definition}
\newtheorem{definition}[theorem]{Definition}
\newtheorem{construction}[theorem]{Construction}
\newtheorem*{convention}{Convention}
\newtheorem{remark}[theorem]{Remark}
\newtheorem{example}[theorem]{Example}
\newtheorem{question}[theorem]{Question}
\renewcommand{\epsilon}{\varepsilon}
\title{Uniqueness of $p$-local truncated Brown-Peterson spectra}
\author{David Jongwon Lee}
\address{Department of Mathematics, MIT, Cambridge, MA, USA}
\email{jongwonl@mit.edu}
\newcommand{\Map}{\mathrm{Map}}
\newcommand{\Ext}{\operatorname{Ext}}
\newcommand{\cof}{\operatorname{cof}}
\newcommand{\Hom}{\operatorname{Hom}}
\newcommand{\Seq}{\operatorname{Seq}}
\newcommand{\tmf}{\mathrm{tmf}}
\newcommand{\ko}{\mathrm{ko}}
\newcommand{\ku}{\mathrm{ku}}
\newcommand{\BP}{\mathrm{BP}}
\newcommand{\gl}{\mathrm{gl}}
\newcommand{\textbpn}{\texorpdfstring{$\BP\langle n\rangle$}{BP<n>}}
\newcommand{\textp}{\texorpdfstring{$p$}{p}}
\newcommand{\Maj}{\operatorname{Maj}}
\newcommand{\ind}{\operatorname{ind}}
\newcommand{\BSO}{\mathrm{BSO}}
\newcommand{\BSU}{\mathrm{BSU}}
\begin{document}
    \begin{abstract}
        When $p$ is an odd prime, we prove that the $\mathbb F_p$-cohomology of $\mathrm{BP}\langle n\rangle$ as a module over the Steenrod algebra determines the $p$-local spectrum $\mathrm{BP}\langle n\rangle$. In particular, we prove that the $p$-local spectrum $\mathrm{BP}\langle n\rangle$ only depends on its $p$-completion $\mathrm{BP}\langle n\rangle_p^\wedge$. As a corollary, this proves that the $p$-local homotopy type of $\mathrm{BP}\langle n\rangle$ does not depend on the ideal by which we take the quotient of $\mathrm{BP}$. In the course of the argument, we show that there is a vanishing line for odd degree classes in the Adams spectral sequence for endomorphisms of $\mathrm{BP}\langle n\rangle$. We also prove that there are enough endomorphisms of $\mathrm{BP}\langle n\rangle$ in a suitable sense. When $p=2$, we obtain the results for $n\leq 3$.
    \end{abstract}
        \maketitle
    \tableofcontents
    \section{Introduction}
    \subsection{Main results}
    The Brown-Peterson spectrum $\BP$, defined for each prime $p$, is a central object in $p$-local chromatic homotopy theory. It is a structured ring spectrum (\cite{CM}, \cite{BM}), and its homotopy groups are
    \[
        \pi_\ast\BP = \mathbb Z_{(p)}[v_1,v_2,\dots]
    \]
    with $|v_i|=2p^i-2$. As a graded ring, $\pi_\ast\BP$ is the universal ring with a $p$-typical formal group law. Roughly speaking, chromatic homotopy theory is the study of the $p$-local stable homotopy category via the stratification of the moduli of formal groups by heights, where the closed stratum of height $> n$ is defined by the equations $p=v_1=\cdots=v_{n}=0$.

    Given a nonnegative integer $n$, the $\BP$-module
    \[
        \BP\langle n\rangle = \BP/(v_{n+1},v_{n+2},\dots)
    \]
    is one of the most basic objects in the theory. In this paper, we show that the $p$-local spectrum $\BP\langle n\rangle$ is uniquely characterized by its $\mathbb F_p$-cohomology. Since the $\mathbb F_p$-cohomology of a spectrum only depends on its $p$-completion, the claim is naturally a combination of the following two theorems. Let us say that a $p$-local spectrum is \emph{of finite type} if it is bounded below and each homotopy group is a finitely generated $\mathbb Z_{(p)}$-module.
    \begin{theorem}\label{thm:intro1}
        Let $p$ be an odd prime and let $X$ be a $p$-local spectrum of finite type. Then, any isomorphism
        \[
            H^\ast(X;\mathbb F_p)\simeq H^\ast(\BP\langle n\rangle;\mathbb F_p)
        \]
        as modules over the Steenrod algebra can be lifted to an equivalence of spectra $X_p^\wedge\simeq \BP\langle n\rangle_p^\wedge$. The same result holds when $p=2$ if $n\leq3$.
    \end{theorem}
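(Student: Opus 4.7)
The plan is to realize the given Steenrod-module isomorphism $\phi\colon H^*(X;\mathbb F_p)\xrightarrow{\sim}H^*(\BP\langle n\rangle;\mathbb F_p)$ as the induced map on cohomology of a genuine map of $p$-complete spectra, and then conclude it is an equivalence via a Whitehead-type theorem. The natural tool is the mod-$p$ Adams spectral sequence
\[
    E_2^{s,t}=\Ext^{s,t}_{A}\bigl(H^*\BP\langle n\rangle,\,H^*X\bigr)\Longrightarrow\pi_{t-s}\Map(X,\BP\langle n\rangle_p^\wedge),
\]
which converges because $X$ is bounded below of finite type and $\BP\langle n\rangle$ is connective of finite type. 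The isomorphism $\phi$ defines a class in $E_2^{0,0}=\Hom_A(H^*\BP\langle n\rangle,H^*X)$, and by the edge-homomorphism property it is realized as the cohomology of a genuine map exactly when it is a permanent cycle.

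To show that $\phi$ is a permanent cycle, I would use $\phi$ itself to identify the $E_2$-page above with that of the Adams spectral sequence for the endomorphism spectrum of $\BP\langle n\rangle$:
\[
    \Ext^{s,t}_{A}\bigl(H^*\BP\langle n\rangle,\,H^*X\bigr)\;\cong\;\Ext^{s,t}_{A}\bigl(H^*\BP\langle n\rangle,\,H^*\BP\langle n\rangle\bigr).
\]
The target of $d_r$ acting on $\phi$ lies in bidegree $(r,r-1)$, which is the odd stem $t-s=-1$. Applying the vanishing line for odd-degree classes in the endomorphism Adams spectral sequence promised in the abstract---itself built on the ``enough endomorphisms'' input---every such target group vanishes already at $E_2$, and hence vanishes in each subquotient $E_r$ for all $r\geq 2$. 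Consequently $\phi$ is a permanent cycle, and the edge map produces some $f\colon X_p^\wedge\to\BP\langle n\rangle_p^\wedge$ with $H^*(f)=\phi$.

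Since $f$ induces an isomorphism on mod-$p$ cohomology and both source and target are bounded below, $p$-complete, and of finite type, the standard Whitehead theorem for such spectra upgrades $f$ to an equivalence, completing the proof. The real difficulty is entirely concentrated in the vanishing-line input: one must show that $\Ext^{s,s-1}_A(H^*\BP\langle n\rangle,H^*\BP\langle n\rangle)$ vanishes for every filtration $s\geq 2$ along the stem $t-s=-1$, and the ``enough endomorphisms'' result is what powers the inductive propagation of bounds through Adams filtration. The restriction to $n\leq 3$ at $p=2$ marks precisely where this inductive control breaks down at the even prime; at odd primes the argument goes through for all $n$.
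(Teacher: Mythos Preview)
Your overall argument is correct and is exactly the paper's approach: run the Adams spectral sequence, invoke the vanishing of the $E_2$-page in odd topological degree (Theorem~\ref{thm:intro6}) to see that $\phi$ survives, and conclude by the mod-$p$ Whitehead theorem.

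However, you have the logical dependencies reversed. The vanishing line of Theorem~\ref{thm:intro6} is \emph{not} built on the ``enough endomorphisms'' input; it is proved purely algebraically in Section~\ref{sec:uniquepadic} by analyzing $\mathcal A//E(n)$ as an $E(m)$-module via the Koszul complex and the weight and $\ell$-mixed weight filtrations (Propositions~\ref{prop:exact} and~\ref{prop:mainineq}). The ``enough endomorphisms'' theorem (Theorem~\ref{thm:intro4}) runs in the opposite direction: it \emph{uses} the vanishing line to show that certain Ext classes are permanent cycles, and it is needed only for the separate $p$-local uniqueness statement (Theorem~\ref{thm:introplocal}), not for Theorem~\ref{thm:intro1} at all. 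So the sentence ``the `enough endomorphisms' result is what powers the inductive propagation of bounds through Adams filtration'' should be struck; the bounds come from direct combinatorics on the Milnor basis.
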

    
    \begin{theorem}\label{thm:introplocal}
        Let $p$ be an odd prime and let $X$ be a $p$-local spectrum of finite type. Then, given any equivalence $f:X_p^\wedge\simeq \BP\langle n\rangle_p^\wedge$, there is an equivalence $X\simeq \BP\langle n\rangle$ that lifts $f/p:X/p\simeq \BP\langle n\rangle/p$. When $p=2$, the theorem holds if $n\leq 3$.
    \end{theorem}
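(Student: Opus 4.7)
The plan is to use the arithmetic fracture square to reduce the theorem to a lattice-matching problem on $\pi_*$, and then apply the ``enough endomorphisms'' result established elsewhere in the paper.

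Since $X$ is $p$-local of finite type, it sits in the arithmetic fracture pullback
\[
    X \simeq X_{\mathbb Q} \times_{(X_p^\wedge)_{\mathbb Q}} X_p^\wedge,
\]
and analogously for $\BP\langle n\rangle$. With $f$ given, producing an equivalence $X\simeq \BP\langle n\rangle$ lifting $f/p$ amounts to supplying a rational equivalence $g\colon X_{\mathbb Q}\simeq \BP\langle n\rangle_{\mathbb Q}$ together with a homotopy filling the rational square whose right-hand horizontal arrow is $f_{\mathbb Q}$. To ensure the output lifts $f/p$ on the nose, I would allow $f$ to be replaced by $\phi\circ f$ for a self-equivalence $\phi$ of $\BP\langle n\rangle_p^\wedge$ satisfying $\phi\equiv\mathrm{id}\pmod p$.

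Next I analyze the rational and integral data. Since $\pi_*\BP\langle n\rangle_p^\wedge = \mathbb Z_p[v_1,\ldots,v_n]$ is torsion-free and concentrated in even degrees, the finite-type hypothesis forces $\pi_* X$ to be a finitely generated torsion-free $\mathbb Z_{(p)}$-module in each degree; via $\pi_* f$ it embeds as a full $\mathbb Z_{(p)}$-sublattice $L\subset \pi_*\BP\langle n\rangle_p^\wedge$ whose $p$-adic completion is the ambient module. Rationally, $X_{\mathbb Q} \simeq \BP\langle n\rangle_{\mathbb Q}$ abstractly, but the fracture square commutes precisely when $L$ coincides with the standard sublattice $\pi_*\BP\langle n\rangle \subset \pi_*\BP\langle n\rangle_p^\wedge$, which will generally not hold on the nose.

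To correct the discrepancy, I choose $\phi$ so that $\pi_*\phi$ carries $L$ onto $\pi_*\BP\langle n\rangle$. Any two full $\mathbb Z_{(p)}$-sublattices of a finite free $\mathbb Z_p$-module are related by an element of $\mathrm{GL}(\mathbb Z_p)$, and the decomposition $\mathrm{GL}(\mathbb Z_p) = \mathrm{GL}(\mathbb Z_{(p)}) \cdot \bigl(1 + p\cdot \End_{\mathbb Z_p}\bigr)$ (which follows from the surjectivity of $\mathrm{GL}(\mathbb Z_{(p)})\to \mathrm{GL}(\mathbb F_p)$) allows us, after absorbing a $\mathrm{GL}(\mathbb Z_{(p)})$-change of basis into $\pi_*\BP\langle n\rangle$, to arrange the two lattices to differ by an element of the congruence subgroup $1 + p\cdot \End_{\mathbb Z_p}(\pi_*\BP\langle n\rangle_p^\wedge)$. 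The ``enough endomorphisms'' theorem then provides a homotopy self-equivalence $\phi$ of $\BP\langle n\rangle_p^\wedge$ realizing this $\pi_*$-automorphism, and in particular with $\phi\equiv\mathrm{id}\pmod p$. Feeding $\phi\circ f$ into the fracture square yields the $p$-local equivalence $X\simeq \BP\langle n\rangle$ whose mod-$p$ reduction is $f/p$.

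The main obstacle is the realization step, i.e.\ lifting a congruence-subgroup element of $\Aut(\pi_*\BP\langle n\rangle_p^\wedge)$ to a homotopy self-equivalence. This is precisely the content of ``enough endomorphisms,'' which runs through the Adams spectral sequence
\[
    \Ext^{s,t}_{\mathcal A}(H^*\BP\langle n\rangle, H^*\BP\langle n\rangle) \Longrightarrow \pi_{t-s}\End(\BP\langle n\rangle_p^\wedge),
\]
and uses the vanishing line for odd-degree classes to eliminate the obstructions that would otherwise arise in odd total degree when one inductively lifts an identity-mod-$p$ $\pi_*$-automorphism along the Adams filtration. The restriction to $n\leq 3$ at $p=2$ is traceable to the failure of the odd-degree vanishing line in the relevant range at the prime $2$.
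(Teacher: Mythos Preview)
Your overall strategy---reduce via the arithmetic fracture square to a lattice-matching problem on $\pi_*$, then correct by post-composing $f$ with endomorphisms of $\BP\langle n\rangle_p^\wedge$---matches the paper's approach in spirit (see Lemma~\ref{lem:plocallift} and Proposition~\ref{prop:AParg}). But the realization step contains a genuine gap.

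The paper's ``enough endomorphisms'' theorem (Theorem~\ref{thm:intro4}, proved as Theorem~\ref{thm:BPnAP}) does \emph{not} assert that an arbitrary element of the congruence subgroup $1+p\cdot\End_{\mathbb Z_p}(\pi_*\BP\langle n\rangle_p^\wedge)$ can be realized by a self-equivalence. It only asserts the preAP condition: for each fixed $k$, the map
\[
    [\BP\langle n\rangle,\BP\langle n\rangle]\otimes\mathbb Q \longrightarrow \prod_{i\leq k}\Hom_{\mathbb Q}(\pi_i\BP\langle n\rangle\otimes\mathbb Q,\pi_i\BP\langle n\rangle\otimes\mathbb Q)
\]
is surjective. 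This is both truncated (only up to degree $k$) and rational (you must pay an uncontrolled power of $p$ to hit a given target); neither defect is removable in a single stroke. Your $\mathrm{GL}(\mathbb Z_{(p)})$-absorption step has the same problem: a degreewise change of $\mathbb Z_{(p)}$-basis of $\pi_*\BP\langle n\rangle$ is again not obviously realized by any self-map of the spectrum, so you have only moved the difficulty rather than discharged it.

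What the paper actually does (Proposition~\ref{prop:AParg}) is an \emph{iterative} correction: starting from $g_{-1}=f$, one repeatedly sets $g_k=g_{k-1}+p^{k+r+1}h_k$, where $h_k$ is chosen via the preAP condition so that $\pi_i(g_k)$ has $p$-local coefficients for all $i\leq k$, and then takes the $p$-adically convergent sum $g=f+\sum_k p^{k+r+1}h_k$. By Lemma~\ref{lem:plocallift} this $g$ lifts to $X\to\BP\langle n\rangle$, and by construction $g/p^r=f/p^r$. No single $\phi$ with a prescribed $\pi_*$-automorphism is ever produced; your outline needs exactly this limiting argument to close.

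Finally, your description of the role of the vanishing line is not how the paper uses it. The odd-degree vanishing line (Theorem~\ref{thm:intro6}) is invoked in Theorem~\ref{thm:BPnAP} to show that the explicit $E_2$-classes $\phi^I$ built in Proposition~\ref{prop:algAP} are permanent cycles (every possible differential target lies in the vanishing region after multiplying by a high enough power of $v_0$), hence lift to honest maps $\widetilde{\phi_i}$ with the desired effect on $\pi_*$ in a range. It is not run as an obstruction theory for lifting a given $\pi_*$-automorphism along the Adams filtration.
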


    The following key result about the $E_2$-page of the Adams spectral sequence that computes maps from $\BP\langle n\rangle_p^\wedge$ to $\BP\langle m\rangle_p^\wedge$ immediately implies Theorem \ref{thm:intro1} by considering the case $m=n$. It will also be crucially used in the proof of Theorem \ref{thm:introplocal}.
    \begin{theorem}\label{thm:intro6}
        Let $p$ be an odd prime, $n\geq m$ nonnegative integers, and $\mathcal A$ the mod $p$ Steenrod algebra. Then, there exists a positive real number $\epsilon>0$, depending only on $p$, $m$, and $n$, such that
        \[
            \Ext_{\mathcal A}^{s,t}(H^\ast(\BP\langle m\rangle;\mathbb F_p),H^\ast(\BP\langle n\rangle;\mathbb F_p))
        \]
        is zero if $t-s$ is odd and $\epsilon s + (t-s) > 1-2p^{n+1}$. Informally, there is a line of negative slope in the Adams chart that meets the $x$-axis at $(1-2p^{n+1},0)$ so that there are no odd degree classes above that line.  When $p=2$, the theorem holds if either $n=m\leq 2$ or $n>m$.
    \end{theorem}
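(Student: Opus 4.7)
The plan is to change rings to the exterior sub-Hopf algebra $E(m) = E(Q_0,\ldots,Q_m)$, express the resulting Ext via a Koszul complex, and extract the vanishing line from a parity-plus-degree argument supplemented by a differential analysis.

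First, since $E(m)\subset\mathcal A$ is a sub-Hopf algebra over which $\mathcal A$ is free as a right module (Milnor--Moore) and $H^*(\BP\langle m\rangle;\mathbb F_p)\cong\mathcal A/\!/E(m)$, the standard change-of-rings isomorphism gives
\[
\Ext_{\mathcal A}^{s,t}(H^*\BP\langle m\rangle, H^*\BP\langle n\rangle)\cong\Ext_{E(m)}^{s,t}(\mathbb F_p, H^*\BP\langle n\rangle).
\]
Since the minimal free resolution of $\mathbb F_p$ over the exterior algebra $E(m)$ is Koszul, the right-hand side is computed by the bigraded complex
\[
H^*(\BP\langle n\rangle;\mathbb F_p)\otimes\mathbb F_p[v_0,\ldots,v_m]
\]
with $v_i$ in bidegree $(s,t)=(1,2p^i-1)$ and differential $d(x)=\sum_i Q_i(x)\otimes v_i$.

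Second, use the standard Milnor-dual identification
\[
H_*(\BP\langle n\rangle;\mathbb F_p)\cong\mathbb F_p[\xi_1,\xi_2,\ldots]\otimes E(\tau_{n+1},\tau_{n+2},\ldots),
\]
with $|\xi_i|=2p^i-2$ even and $|\tau_j|=2p^j-1$ odd, to see that every odd-degree class of $H^*\BP\langle n\rangle$ has internal degree at least $2p^{n+1}-1$. Since each $v_i$ contributes an even amount $2p^i-2$ to $t-s$, any cochain monomial $x\otimes v_0^{a_0}\cdots v_m^{a_m}$ with $t-s$ odd forces $|x|$ to be odd, and hence the cochain lives in internal degree at least $2p^{n+1}-1$. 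This immediately yields a horizontal vanishing bound at the cochain level.

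Third, to promote this horizontal cutoff into the claimed slope statement, analyze the Koszul differential: for $s$ sufficiently large compared to $t-s$, show that every odd-parity cocycle is a coboundary, the required preimage arising by dividing a factor $v_i$ from the monomial and inverting $Q_i$ on the $H^*\BP\langle n\rangle$-factor. The existence of such preimages is controlled by the Margolis homologies $H(H^*\BP\langle n\rangle;Q_i)$, which can be obtained either directly from the Milnor coaction formulas, or inductively in $n$ via the cofiber sequence
\[
\Sigma^{2p^n-2}\BP\langle n\rangle\xrightarrow{v_n}\BP\langle n\rangle\to\BP\langle n-1\rangle
\]
and the resulting long exact sequence of Ext groups, reducing to the analogous statement for $\BP\langle n-1\rangle$.

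The main obstacle will be this last step: controlling the surjectivity of the Koszul differential uniformly in all bidegrees, so as to obtain a genuinely negative slope rather than merely a horizontal cutoff. This is also the step that breaks down at $p=2$, since the dual Steenrod algebra is then purely polynomial and the parity mechanism of the second step is unavailable; this accounts for the restricted range of $(m,n)$ in the $p=2$ statement of the theorem, where one must fall back on direct computations.
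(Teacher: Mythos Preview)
Your setup is correct and matches the paper: change of rings to $E(m)$, the Koszul complex $(\mathcal A/\!/E(n))[v_0,\ldots,v_m]$ with $d=\sum v_iQ_i$, and the observation that odd-degree elements of $\mathcal A/\!/E(n)$ have internal degree at least $2p^{n+1}-1$, which yields a horizontal cutoff. You also correctly identify that the exactness of each $Q_i$ on odd-degree elements (Margolis-type input) is the engine for improving this to a sloped line.

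The genuine gap is your third step, which you yourself flag as ``the main obstacle'' without resolving it. Dividing out one $v_i$ and applying $Q_i^{-1}$ to the leading term does not produce a $d$-preimage: the remaining differentials $v_jQ_j$ for $j\neq i$ reappear, and one is forced into a zig-zag whose termination is the entire content of the theorem. Neither Margolis homology (which controls each $Q_i$ separately but says nothing about their interaction) nor the cofiber-sequence induction you sketch gives the needed quantitative control on the length of this zig-zag. The paper's missing ingredient is a family of auxiliary gradings on $\mathcal A/\!/E(n)$, the \emph{$\ell$-mixed weights}, together with a ``normal representative'' for each Ext class; one shows that in the zig-zag each application of $Q_i^{-1}$ followed by $d^{i+1}$ strictly decreases the $(n-i)$-mixed weight by at least $2p^{n+1}-2p^n$, forcing termination after at most $(w-2p^{n+1})/(2p^n(p-1))$ steps, and hence bounding the $v_i$-divisibility of any odd class of weight $w$. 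Combining this with the degree bound from your second step produces the sloped line after a short numerical manipulation.

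Your diagnosis of the $p=2$ failure is also off. The paper works throughout with the uniform presentation of the dual Steenrod algebra in which the $\tau_i$ are present (with $\tau_i^2=\xi_{i+1}$ rather than $0$), so the parity argument of your second step survives intact at $p=2$. What actually fails for $p=2$ and $n=m\geq 3$ is purely numerical: the inequality $2p^{j+1}-2p^j>\frac{p}{(p-1)^2}(2p^j-2)$ needed to close up the bounds on the $r_i$ breaks down, not the parity mechanism.
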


    \begin{remark}
        Let us briefly discuss the history of this problem. The case $n=1$ of Theorems \ref{thm:intro1} and \ref{thm:introplocal} is a classic result of Adams and Priddy \cite{AP}. Their argument does not clearly separate Theorem \ref{thm:introplocal} from the combined theorem, and it required a careful analysis of now-standard convergence issues in Adams spectral sequences. One goal of this paper is to clarify their argument while generalizing it to arbitrary $n$. A special feature of the case $n=1$ is the existence of geometrically defined Adams operations, for which we must find a suitable substitute in our generalization.

        More recently, in their work \cite{AL}, Angeltveit and Lind showed that Theorem \ref{thm:intro6} holds for $n=m$, which immediately implies Theorem \ref{thm:intro1}. Furthermore, in \cite[Remark 4.1]{AL}, they observed that in order to prove Theorem \ref{thm:introplocal}, one would have to produce many automorphisms of $\BP\langle n\rangle$ as we do in Section \ref{sec:uniqueplocal}.

        Unfortunately, there appears to be an error in a part of their argument where they obtain upper bounds on the Adams filtration of some Ext classes. See Remark \ref{rmk:AL}. In private communication, Angeltveit independently suggested a fix that is similar to the argument given in this paper, so Theorem \ref{thm:intro6} and thus Theorem \ref{thm:intro1} should be considered due to Angeltveit and Lind. The new argument is not strong enough for the prime $2$, so even Theorem \ref{thm:intro1} is still an open question for $n\geq 4$.

        Theorem \ref{thm:introplocal} is entirely new and surprisingly subtle despite its innocuous look. For example, it is false for $\BP\langle n\rangle\oplus\BP\langle m\rangle$ if $m\neq n$. See Example \ref{ex:Zplusell} for a discussion on $\BP\langle 0\rangle\oplus\BP\langle1\rangle$.
    \end{remark}
    
    \begin{conjecture}\label{conj:intro}
        Theorems \ref{thm:intro1}, \ref{thm:introplocal}, and \ref{thm:intro6} are true at $p=2$ for all $n$ and $m$.
    \end{conjecture}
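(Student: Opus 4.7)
Since Theorem \ref{thm:intro6} immediately implies Theorem \ref{thm:intro1} and its vanishing line is the crucial input to Theorem \ref{thm:introplocal}, the entire conjecture reduces to proving Theorem \ref{thm:intro6} at $p=2$ for all $n\geq m$; the cases $n=m\leq 2$ and $n>m$ are already settled, so the genuinely open range is $n=m\geq 3$. I would take this as the single target.

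At odd primes, $H^\ast(\BP\langle n\rangle;\mathbb{F}_p) \cong \mathcal{A}/\!/E(Q_0,\ldots,Q_n)$ with $E(Q_0,\ldots,Q_n)$ an exterior sub-Hopf algebra of $\mathcal{A}$ in odd degrees and the quotient concentrated in even degrees. A change-of-rings reduces the relevant $\Ext$ to $\Ext$ over an exterior algebra, where the parity of $t-s$ is simply the parity of the number of $Q_i$'s appearing in a generator. At $p=2$ the Milnor primitives still have odd degrees $2^{i+1}-1$, but $E(Q_0,\ldots,Q_n)$ is not a sub-Hopf algebra of $\mathcal{A}$; the smallest one containing them is the full $\mathcal{A}(n)$, and the clean Koszul structure is lost.

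The plan is to replace the Koszul argument by a two-stage spectral sequence. First, use the Cartan-Eilenberg spectral sequence for the Hopf algebra extension $\mathcal{A}(n)\subset \mathcal{A}$ to reduce to $\Ext$ over $\mathcal{A}(n)$. Second, within $\mathcal{A}(n)$, run the May spectral sequence associated with its augmentation ideal filtration; the $E_1$-page is a polynomial-exterior algebra on generators $h_{i,j}$ whose $(t-s)$-degrees can be enumerated, turning the odd-degree vanishing statement into a combinatorial problem about lattice points. The goal is to establish the vanishing line on the $E_1$-page, argue that differentials can only remove classes, and verify that the Cartan-Eilenberg extension does not reintroduce odd-degree classes above the line. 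A potentially more powerful alternative is to work with $\BP\langle n\rangle$-synthetic spectra in the style of Pstragowski and Burklund-Hahn-Senger, where the bigraded $\tau$-deformation often produces sharper vanishing than is visible on the classical Adams $E_2$-page.

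The hard part is precisely what blocks the current argument at $p=2$: odd-degree non-nilpotent elements such as $h_1=\eta$, together with their Massey product families, produce infinitely many odd-degree $\Ext$ classes that lie close to the conjectured vanishing line. Confirming that none cross it requires genuine new input—either a sharp vanishing theorem on the May $E_\infty$-page, or an external geometric constraint coming from an $E_k$-ring refinement of $\BP\langle n\rangle$ for $n\geq 4$ at $p=2$, itself a subtle open problem. I expect any honest resolution of the conjecture to hinge on producing such an input.
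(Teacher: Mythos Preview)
The statement is a \emph{conjecture}; the paper does not prove it and explicitly says the methods are insufficient for the remaining $p=2$ cases. So there is no paper proof to compare against, and your proposal is a research strategy rather than a proof. That said, the strategy rests on a misdiagnosis of the obstruction.

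Your central structural claim---that at $p=2$ the exterior algebra $E(n)=\Lambda(Q_0,\dots,Q_n)$ is not a sub-Hopf algebra of $\mathcal A$, forcing a detour through $\mathcal A(n)$---is incorrect. Lemma~\ref{lem:steenrod}(b) holds at all primes: the $Q_k$ are primitive and generate an exterior subalgebra, hence a sub-Hopf algebra, even when $p=2$. The paper's change-of-rings isomorphism $\Ext_{\mathcal A}(\mathcal A/\!/E(m),\mathcal A/\!/E(n))\cong\Ext_{E(m)}(\mathbb F_p,\mathcal A/\!/E(n))$ and the Koszul complex of Construction~\ref{cons:Ext} work at $p=2$ without modification; indeed the paper uses them to establish the $n>m$ and $n=m\leq 2$ cases and the weak line at $n=m=3$. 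Consequently the proposed Cartan--Eilenberg/May route over $\mathcal A(n)$ is not addressing the real difficulty, and the invocation of $h_1=\eta$ is misplaced: that class lives in $\Ext_{\mathcal A}(\mathbb F_p,\mathbb F_p)$, not in the $\Ext_{E(m)}$ groups at hand, whose ring of operators $\mathbb F_p[v_0,\dots,v_m]$ is concentrated in even topological degree.

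The actual obstruction is purely numerical and is visible in the proof of Theorem~\ref{thm:vanish}. The bound of Proposition~\ref{prop:mainineq} on the exponents $r_i$ in a normal representative, combined with Lemma~\ref{lem:maxdeg}, yields inequalities that at odd primes force a negative-slope vanishing line; at $p=2$ and $n=m\geq 3$ the same inequalities degenerate (the factor $(p-1)^2$ in the denominator is $1$) and at best give the weak vanishing line $t-s\leq 1-2p^{n+1}$. A genuine attack on the conjecture should therefore aim to sharpen Proposition~\ref{prop:mainineq} or Lemma~\ref{lem:ellweightineq} at $p=2$, e.g.\ by a finer filtration of $\mathcal A/\!/E(n)$ than the $\ell$-mixed weights, rather than by abandoning the Koszul framework.
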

    
    As stated in the conjecture above, the author expects every theorem in this paper to hold for all $n$ and $m$ when $p=2$, but the methods of this paper are not sufficient for the remaining cases.

    The following is a statement about maps between $p$-local $\BP\langle n\rangle$ and $\BP\langle m\rangle$ that may be of interest. It is proved in Section \ref{sec:uniqueplocal}. See also Lemma \ref{lem:plocallift} for the description of the image of the following injection.
    \begin{proposition}\label{thm:introinj}
        Let $p$ be an odd prime and let $n\geq m$ be nonnegative integers. Then, for any integer $k$, the natural map
        \[
            [\BP\langle n\rangle,\Sigma^k\BP\langle m\rangle]\to [\BP\langle n\rangle_p^\wedge,\Sigma^k\BP\langle m\rangle_p^\wedge]
        \]
        is injective. In particular, combined with Theorem \ref{thm:intro6}, we have
        \[
            [\BP\langle n\rangle,\Sigma^k\BP\langle m\rangle]=[\BP\langle n\rangle,\Sigma^k\BP\langle m\rangle_p^\wedge]=0
        \]
        for $k$ odd and strictly less than $2p^{n+1}-1$. The same is true when $p=2$ if either $n>m$ or $n=m\leq 2$.
    \end{proposition}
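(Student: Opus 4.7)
The plan is to translate the injectivity into a vanishing of infinitely $p$-divisible elements. Since $\BP\langle m\rangle_p^\wedge$ is $p$-complete, the identification $[\BP\langle n\rangle_p^\wedge,\Sigma^k\BP\langle m\rangle_p^\wedge]=[\BP\langle n\rangle,\Sigma^k\BP\langle m\rangle_p^\wedge]$ shows that the natural map in the proposition is induced by the $p$-completion $\BP\langle m\rangle\to\BP\langle m\rangle_p^\wedge$.

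I would then use the arithmetic fracture square
\[
    \begin{tikzcd}
        \BP\langle m\rangle \ar[r] \ar[d] & \BP\langle m\rangle_p^\wedge \ar[d] \\
        \BP\langle m\rangle_\mathbb{Q} \ar[r] & (\BP\langle m\rangle_p^\wedge)_\mathbb{Q}
    \end{tikzcd}
\]
and apply $[\BP\langle n\rangle,\Sigma^k\,\cdot\,]$ to get a Mayer--Vietoris long exact sequence. The rationalized map
\[
    [\BP\langle n\rangle, \Sigma^k \BP\langle m\rangle_\mathbb{Q}] \longrightarrow [\BP\langle n\rangle, \Sigma^k (\BP\langle m\rangle_p^\wedge)_\mathbb{Q}]
\]
is injective, because both targets are computed as products of copies of $H^\ast(\BP\langle n\rangle;\mathbb{Q})$ tensored over $\mathbb{Q}$ with $\mathbb{Q}[v_1,\ldots,v_m]$ and $\mathbb{Q}_p[v_1,\ldots,v_m]$ respectively, and the map is base change along the injection $\mathbb{Q}\hookrightarrow\mathbb{Q}_p$, which is exact over the field $\mathbb{Q}$. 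A diagram chase in the MV sequence then identifies the kernel of the map in the proposition with the image of the MV boundary $[\BP\langle n\rangle, \Sigma^{k-1}(\BP\langle m\rangle_p^\wedge)_\mathbb{Q}]\to[\BP\langle n\rangle, \Sigma^k \BP\langle m\rangle]$. Since the source of this boundary is a $\mathbb{Q}_p$-vector space, every class in its image is infinitely $p$-divisible in $[\BP\langle n\rangle,\Sigma^k\BP\langle m\rangle]$, so the whole statement reduces to the vanishing $\bigcap_n p^n [\BP\langle n\rangle, \Sigma^k\BP\langle m\rangle] = 0$.

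The main obstacle is this last vanishing. I would attack it by filtering $\BP\langle n\rangle=\operatorname{colim}_\ell F_\ell$ by $p$-local finite subspectra (say, skeleta for a minimal cell structure). Each $[F_\ell, \Sigma^k\BP\langle m\rangle]$ is a finitely generated $\mathbb{Z}_{(p)}$-module, where Krull's intersection theorem forces $\bigcap_n p^n = 0$; hence any infinitely $p$-divisible class in $[\BP\langle n\rangle, \Sigma^k\BP\langle m\rangle]$ restricts to zero on every $F_\ell$ and therefore must sit in the Milnor term $\lim^1_\ell[F_\ell,\Sigma^{k-1}\BP\langle m\rangle]$. The heart of the argument is to rule this out, either by establishing a Mittag--Leffler condition for the tower (so $\lim^1=0$) or, more carefully, by comparing the Milnor sequences for $\BP\langle m\rangle$ and $\BP\langle m\rangle_p^\wedge$ via the rational cofiber $\cof(\BP\langle m\rangle\to\BP\langle m\rangle_p^\wedge)$ and exploiting the finite-dimensionality of $H^\ast(F_\ell;\mathbb{Q})$ in each degree to force the relevant $\lim^1$ contribution to be non-$p$-divisible.
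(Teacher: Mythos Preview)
Your fracture-square reduction is correct and matches the paper exactly through Proposition~\ref{prop:fracture}: the kernel of $[\BP\langle n\rangle,\Sigma^k\BP\langle m\rangle]\to[\BP\langle n\rangle,\Sigma^k\BP\langle m\rangle_p^\wedge]$ is the image of the connecting map out of a $\mathbb Q$-vector space, hence consists of infinitely $p$-divisible elements. But at that point you and the paper diverge, and your final step has a genuine gap.

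The vanishing of $\bigcap_r p^r[\BP\langle n\rangle,\Sigma^k\BP\langle m\rangle]$ is not a formality. Example~\ref{ex:Zplusell} shows that for $(\BP\langle0\rangle,\BP\langle1\rangle)$ there is an embedding $\mathbb Q_p/\mathbb Q\hookrightarrow[\mathbb Z_{(p)},\Sigma\ell]$, so infinitely $p$-divisible elements do occur and the injectivity genuinely fails there. In the language of your Milnor argument, the tower $[F_\ell,\Sigma^{k-1}\BP\langle m\rangle]$ is not Mittag--Leffler in that case, and the $\lim^1$ contribution carries exactly these divisible classes. Your two suggested fixes (Mittag--Leffler, or comparing $\lim^1$ through the rational cofiber) never invoke the hypothesis $n\geq m$ or the vanishing line, so they cannot distinguish the true statement from this false variant. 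In particular, showing that the relevant $\lim^1$ piece is ``non-$p$-divisible'' is precisely as hard as the original claim.

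The paper does not attempt to kill the image of the boundary. It instead proves that the \emph{previous} map in the long exact sequence,
\[
[\BP\langle n\rangle,\Sigma^{k-1}\BP\langle m\rangle_p^\wedge]\oplus[\BP\langle n\rangle,\Sigma^{k-1}\BP\langle m\rangle\otimes\mathbb Q]\longrightarrow[\BP\langle n\rangle,\Sigma^{k-1}\BP\langle m\rangle_p^\wedge\otimes\mathbb Q],
\]
is surjective (Proposition~\ref{prop:fracture}). This surjectivity is where all the content lives: it requires the preAP property for $(\BP\langle n\rangle,\Sigma^{k-1}\BP\langle m\rangle)$, established in Theorem~\ref{thm:BPnAP} by explicitly constructing enough maps $\BP\langle n\rangle\to\Sigma^{\ast}\BP\langle m\rangle$ via the Ext classes $\phi^I$ of Proposition~\ref{prop:algAP} and then lifting them through the Adams spectral sequence using the vanishing line hypothesis of Theorem~\ref{thm:vanish}. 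Both $n\geq m$ and the vanishing line enter here and nowhere else. So the step you labeled ``the heart of the argument'' is indeed the heart, but it needs the paper's construction of maps, not a filtration argument on the source.
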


    Finally, we note that in Section \ref{sec:uniqueplocal} we will actually prove the following stronger version of the main theorems.

    \begin{theorem}\label{thm:intro1s}
        Let $p$ be an odd prime, $X$ a $p$-local spectrum of finite type, and $Y$ a direct sum of even suspensions of $\BP\langle n\rangle$
        \[
            Y = \Sigma^{2i_1}\BP\langle n\rangle\oplus\cdots\oplus\Sigma^{2i_k}\BP\langle n\rangle
        \]
        such that $0\leq i_1\leq\cdots\leq i_k < p^{n+1}-1$. Then, any isomorphism
        \[
            H^\ast(X;\mathbb F_p)\simeq H^\ast(Y;\mathbb F_p)
        \]
        as modules over the Steenrod algebra can be lifted to an equivalence of spectra $X\simeq Y$. When $p=2$, the same result holds if $n\leq 2$.
    \end{theorem}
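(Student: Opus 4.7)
The plan is to combine Theorems \ref{thm:intro1} and \ref{thm:introplocal} in a sum version, with $Y$ in place of $\BP\langle n\rangle$. The hypothesis $0\le i_1\le\cdots\le i_k<p^{n+1}-1$ is exactly what keeps every shifted mapping group between summands of $Y$ inside the vanishing regions of Theorem \ref{thm:intro6} and Proposition \ref{thm:introinj}.

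The first step is to lift $\phi : H^\ast X\simeq H^\ast Y$ to an equivalence $f : X_p^\wedge\simeq Y_p^\wedge$ using the Adams spectral sequence for $F(X,Y)_p^\wedge$. Its $E_2$-page splits as a direct sum over pairs of summands, and under $\phi$ each piece becomes a shifted copy of $\Ext_{\mathcal A}^{\ast,\ast}(H^\ast\BP\langle n\rangle,H^\ast\BP\langle n\rangle)$ with internal shift $2(i_b-i_a)$. The obstructions to $\phi$ being a permanent cycle land in $\Ext^{r,r-1}$ of each summand, translating to bidegrees with $t-s=2(i_b-i_a)-1$, which is odd. Since $|i_b-i_a|\le p^{n+1}-2$, the inequality $\epsilon r+(t-s)>1-2p^{n+1}$ of Theorem \ref{thm:intro6} holds for every $r\ge 2$, so $\phi$ lifts to $f$, which is an equivalence because it realizes $\phi$ on $\mathbb F_p$-cohomology and both spectra are $p$-complete of finite type.

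The second step is to descend $f$ to a $p$-local equivalence $X\simeq Y$ following the strategy of Section \ref{sec:uniqueplocal}. Arithmetic fracture writes $X$ as $X_\mathbb Q\times_{(X_p^\wedge)_\mathbb Q}X_p^\wedge$ and similarly for $Y$, so a $p$-local equivalence is a compatible triple of the $p$-completed equivalence $f$, a rational equivalence (forced canonically on $\pi_\ast$ by $\phi$), and a gluing datum at the rationalized $p$-completion. I would modify $f$ by an automorphism of $Y_p^\wedge$ coming from an automorphism of $Y$ to arrange the gluing; here Proposition \ref{thm:introinj} enters componentwise, both injecting $p$-local mapping groups between pairs of summands into the corresponding $p$-complete ones and, thanks to $i_k<p^{n+1}-1$, keeping every relevant odd degree strictly less than $2p^{n+1}-1$ so that no stray odd-degree cross-terms appear. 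The main obstacle is producing enough automorphisms of $Y$ for this adjustment: even for a single $\BP\langle n\rangle$ this requires a subtle construction (the ``enough endomorphisms'' assertion of this paper, extending the Angeltveit--Lind observation), and for a sum one must additionally show that the off-diagonal entries in $\Aut(Y)$ are rich enough to absorb the rational discrepancy, which is exactly where the sharp bound $i_k<p^{n+1}-1$ on the shifts is used.
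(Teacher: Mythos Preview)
Your Step~1 is correct and matches the paper: the Adams $E_2$-page for $F(X,Y)$ splits over pairs of summands, and under $\phi$ each piece is a shift of $\Ext_{\mathcal A}(\mathcal A//E(n),\mathcal A//E(n))$ by an even amount of absolute value at most $2(p^{n+1}-2)$, so Theorem~\ref{thm:intro6} kills the entire column $t-s=-1$ and $\phi$ survives to an equivalence $X_p^\wedge\simeq Y_p^\wedge$.

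Your Step~2 is in the right spirit but is both more complicated than necessary and slightly misdiagnosed. The paper does not argue by hand with the fracture square and off-diagonal automorphisms; instead it isolates the abstract condition ``$(X,Y)$ is preAP'' (Definition in Section~\ref{sec:AParg}) and proves a clean black-box Proposition~\ref{prop:AParg}: if $(X,Y)$ is preAP and $X_p^\wedge\simeq Y_p^\wedge$, then $X\simeq Y$. The substantial input is Theorem~\ref{thm:BPnAP}, which says $(\BP\langle n\rangle,\BP\langle n\rangle)$ is AP, i.e.\ preAP after \emph{every} suspension. Since the preAP condition is phrased entirely in terms of $p$-completions, it is invariant under $X_p^\wedge\simeq Y_p^\wedge$ and decomposes over finite direct sums of suspensions; hence $(X,Y)$ is preAP with no further work, and Proposition~\ref{prop:AParg} finishes. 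In particular, the bound $i_k<p^{n+1}-1$ plays \emph{no role} in Step~2: it is used only in Step~1 to get the Adams vanishing. Your claim that the sharp bound is ``exactly where'' the off-diagonal richness is used is therefore a misattribution, and the vanishing from Proposition~\ref{thm:introinj} that you invoke (odd-degree maps between summands) is not actually needed here. What you describe as ``the main obstacle'' is precisely the content of Theorem~\ref{thm:BPnAP}, so your sketch is not yet a proof until that is supplied; once it is, the direct-sum case is immediate from the AP formalism rather than requiring a separate off-diagonal analysis.
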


    \subsection{Applications}
    Let us describe three examples where Theorem \ref{thm:intro1s} can be applied. In each of the applications, the $\mathbb F_p$-cohomology of a spectrum is computed by other means, and we use the theorem to determine its homotopy type.

    First, the following corollary is immediate.
    \begin{corollary}
        Let $p$ be an odd prime. Then, the homotopy type of $\BP\langle n\rangle$ does not depend on the choice of generators $v_{n+1},v_{n+2},\dots\in\pi_\ast\BP$. When $p=2$, the same holds if $n\leq3$ and conjecturally for all $n$ depending on Conjecture \ref{conj:intro}.
    \end{corollary}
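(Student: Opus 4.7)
The plan is to reduce the corollary to Theorems \ref{thm:intro1} and \ref{thm:introplocal}. Fix two choices of polynomial generators $\{v_i\}_{i>n}$ and $\{v'_i\}_{i>n}$ of $\pi_\ast\BP$ in the appropriate degrees, and set $X = \BP/(v_{n+1},v_{n+2},\ldots)$ and $X' = \BP/(v'_{n+1},v'_{n+2},\ldots)$, each constructed as an iterated cofiber of a regular sequence in $\BP$-modules. Both $X$ and $X'$ are $p$-local spectra of finite type with homotopy $\mathbb Z_{(p)}[v_1,\ldots,v_n]$ concentrated in even degrees, so the hypotheses of the two main theorems are available; the goal is to produce an equivalence $X \simeq X'$.

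The first step is to identify $H^\ast(X;\mathbb F_p)$ and $H^\ast(X';\mathbb F_p)$ as modules over the Steenrod algebra $\mathcal A$. Starting from Milnor's isomorphism $H^\ast(\BP;\mathbb F_p) \cong \mathcal A /\!/ E(Q_0,Q_1,\ldots)$ and iterating the cofiber long exact sequences for multiplication by each $v_i$ (respectively $v'_i$), a standard calculation yields
\[
    H^\ast(X;\mathbb F_p) \cong H^\ast(X';\mathbb F_p) \cong \mathcal A /\!/ E(Q_0,Q_1,\ldots,Q_n).
\]
The key input is that any polynomial generator of $\pi_{2p^i-2}\BP$ acts on $H^\ast(\BP;\mathbb F_p)$ via the Milnor primitive $Q_i$ up to a unit and decomposables, so the resulting $\mathcal A$-module is insensitive to the particular polynomial representatives chosen.

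With this isomorphism in hand, I would apply Theorem \ref{thm:intro1} to lift it to an equivalence of $p$-completions $X_p^\wedge \simeq (X')_p^\wedge$, and then invoke Theorem \ref{thm:introplocal} to lift that further to the desired $p$-local equivalence $X \simeq X'$. The restriction $n \leq 3$ at $p=2$ is inherited directly from those two theorems. There is no genuine new obstacle in the corollary itself: all the difficulty has been isolated into the proofs of Theorems \ref{thm:intro1} and \ref{thm:introplocal}, and the only routine verification is the cohomology computation in the previous paragraph.
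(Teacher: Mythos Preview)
Your proposal is correct and follows essentially the same route as the paper: the paper observes (citing Wilson) that $H^\ast(\BP\langle n\rangle;\mathbb F_p)\cong\mathcal A/\!/E(n)$ independently of the chosen generators, and then invokes Theorems \ref{thm:intro1} and \ref{thm:introplocal} exactly as you do. The only cosmetic difference is that the paper black-boxes the cohomology computation rather than sketching the cofiber-sequence argument you outline.
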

    Recall that there is no preferred choice of the generators $v_1,v_2,\dots$ of $\pi_\ast\BP$, while the $\BP$-module structure of $\BP\langle n\rangle$ clearly depends on the choice of generators, or more precisely on the ideal $(v_{n+1},v_{n+2},\dots)$. However, the corollary follows from the fact that the $\mathbb F_p$-cohomology of $\BP\langle n\rangle$ as a module over the Steenrod algebra does not depend on the choice of generators. See Section \ref{sec:HBPn} for a detailed discussion of its cohomology.
    

    The second application, which appears in the original work \cite{AP} of Adams and Priddy, is the identification of the spectrum $\gl_1(\ku)$. In \cite[Theorem 1.2]{AP}, they show that if $X$ is a $p$-local spectrum such that $\Omega^\infty X = BSU_{(p)}$, then $H^\ast(X;\mathbb F_p)=H^\ast(\Sigma^4\ku;\mathbb F_p)$ as modules over the Steenrod algebra. Then, the equivalences
    \[
        \Omega^{\infty}\tau_{\geq 3} \gl_1(\ku)_{(p)} = \Omega^{\infty}\tau_{\geq 3} \ku_{(p)} =  \BSU_{(p)}
    \]
    imply that $H^\ast(\tau_{\geq_3}\gl_1(\ku);\mathbb F_2)= H^\ast(\Sigma^4\ku;\mathbb F_2)$. Therefore, by Theorem \ref{thm:intro1s}, we have an equivalence
    \[
        \tau_{\geq3}\gl_1(\ku)_{(p)} = \Sigma^4\ku_{(p)},
    \]
    and the identification of $\gl_1(\ku)$ reduces to the attaching maps of two more homotopy groups. New applications of results of this paper to the $\gl_1$-spectra of $\mathbb E_\infty$-ring spectra of chromatic height $2$ will appear in a forthcoming paper by Jeremy Hahn and Andrew Senger.

    The last application we discuss is the splitting of $\tmf$, the spectrum of topological modular forms, at primes $p\geq 5$. See, for example, \cite{tmfsurvey} for a survey of $\tmf$. The following theorem seems to be a folklore and well-known to experts, but the proof does not exist in the literature. Here, we record it as one of the applications of Theorems \ref{thm:intro6} and \ref{thm:intro1s}. The computations  of the cohomology groups of $\tmf$ appear in \cite[Theorem 21.5]{Rezktmf}. See Corollary \ref{cor:tmfsplit} for details.
    \begin{theorem}\label{thm:introtmf}
        The spectrum of topological modular forms $\tmf$ splits into a direct sum of even suspensions of $\BP\langle 2\rangle$ after localization at a prime $p\geq5$.
    \end{theorem}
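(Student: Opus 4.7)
The plan is to verify the hypothesis of Theorem~\ref{thm:intro1s} for $X = \tmf_{(p)}$ with $n=2$, which requires two inputs: an $\mathcal A$-module isomorphism
\[
    H^*(\tmf;\mathbb F_p) \;\cong\; \bigoplus_{j} \Sigma^{2i_j}\, H^*(\BP\langle 2\rangle;\mathbb F_p),
\]
and the bound $i_j < p^3-1$ on the shifts.

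For the first input, I would appeal to Rezk's explicit computation \cite[Theorem~21.5]{Rezktmf} of $H^*(\tmf;\mathbb F_p)$. At $p\geq 5$, $\tmf_{(p)}$ is torsion-free with $\pi_*\tmf_{(p)} \cong \mathbb Z_{(p)}[c_4, c_6]$ ($|c_4|=8$, $|c_6|=12$). Combined with the standard identification $H^*(\BP\langle 2\rangle;\mathbb F_p) \cong \mathcal A/\!/E(Q_0,Q_1,Q_2)$ discussed in Section~\ref{sec:HBPn}, Rezk's formula produces a splitting of the required form, with shifts $\{2i_j\}$ indexed by a monomial basis of the finite-dimensional quotient of $\pi_*\tmf_{(p)}$ by a regular sequence $(f,g)$ of degrees $2p-2$ and $2p^2-2$ (arising from the images of $v_1,v_2$ under some map of ring spectra $\BP_{(p)} \to \tmf_{(p)}$).

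For the second input, a Poincar\'e-series computation gives
\[
    \sum_j t^{2i_j} \;=\; \frac{(1-t^{2p-2})(1-t^{2p^2-2})}{(1-t^8)(1-t^{12})},
\]
whose top degree is $2p^2+2p-24$; hence $i_j \leq p^2+p-12 < p^3-1$ for every prime $p\geq 5$. Invoking Theorem~\ref{thm:intro1s} then upgrades the algebraic splitting to an equivalence of $p$-local spectra $\tmf_{(p)}\simeq \bigoplus_j \Sigma^{2i_j}\BP\langle 2\rangle$. The main substantive step is the algebraic identification of $H^*(\tmf;\mathbb F_p)$ as a direct sum of shifts of $\mathcal A/\!/E(Q_0,Q_1,Q_2)$ as $\mathcal{A}$-modules --- not merely as a module having the same Poincar\'e series --- and this is precisely what Rezk's calculation provides. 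Once in hand, the degree bound and the application of Theorem~\ref{thm:intro1s} are routine.
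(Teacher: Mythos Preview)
Your outline is nearly the paper's, but you have collapsed a step that the paper treats as nontrivial. Rezk's computation \cite[Theorem~21.5]{Rezktmf} gives $H^*(\tmf;\mathbb F_p)$ only as a \emph{filtered} $\mathcal A$-module whose associated graded is the direct sum of even suspensions of $\mathcal A/\!/E(2)$; it does not a priori give you the $\mathcal A$-module splitting. In your proposal you assert that ``this is precisely what Rezk's calculation provides,'' but that is exactly the point in question. The paper handles this by observing that the extension classes live in groups of the form
\[
    \Ext^1_{\mathcal A}\bigl(\Sigma^{2i}\mathcal A/\!/E(2),\,\Sigma^{2j}\mathcal A/\!/E(2)\bigr)
\]
with $0\leq i,j\leq p^2+p-12$, and then invokes Theorem~\ref{thm:intro6} (with $m=n=2$) to see that these $\Ext^1$ groups vanish because the topological degree $2j-2i-1$ is odd and exceeds $1-2p^3$. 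Only after this $\Ext^1$ argument does one know that $H^*(\tmf;\mathbb F_p)$ is actually a direct sum, making Theorem~\ref{thm:intro1s} applicable.

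So the missing ingredient in your write-up is precisely one more application of the vanishing result of Theorem~\ref{thm:intro6}, this time to split the filtration at the level of cohomology before passing to spectra. Your degree bound $i_j\leq p^2+p-12<p^3-1$ is correct and is what makes both the $\Ext^1$ vanishing and the hypothesis of Theorem~\ref{thm:intro1s} go through; you just need to invoke it twice rather than once.
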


    \subsection{Proof outline}
    The proof of Theorem \ref{thm:intro1} is a standard Adams spectral sequence argument. More precisely, there is a strongly convergent spectral sequence (Remark \ref{rmk:conv})
    \[
        E_2^{s,t}=\Ext_{\mathcal A}^{s,t}(H^\ast(\BP\langle n\rangle;\mathbb F_p),H^\ast(X;\mathbb F_p))\Rightarrow[\Sigma^{t-s}X_p^\wedge,\BP\langle n\rangle_p^\wedge]
    \]
    where $\mathcal A$ is the mod $p$ Steenrod algebra. If the group $E_2^{s,t}$ vanishes when $t-s=-1$, then the isomorphism $H^\ast(\BP\langle n\rangle;\mathbb F_p)\simeq H^\ast(X;\mathbb F_p)$ is not the source of any differential in the spectral sequence, and it can be lifted to an equivalence $X_p^\wedge\simeq \BP\langle n\rangle_p^\wedge$. In particular, it follows from the stronger Theorem \ref{thm:intro6}, which we prove in Section \ref{sec:uniquepadic}. We note that Theorem \ref{thm:intro6} was proved for $n=m=1$ in \cite[Proposition 4.1]{AP} by completely computing the Ext groups.

    For the proof of Theorem \ref{thm:introplocal}, we prove the following result about obtaining $p$-local homotopy types from $p$-complete ones. This argument is implicitly used in \cite[Section 5]{AP}.
    \begin{theorem}\label{thm:intro3}
        Let $X, Y$ be $p$-local spectra of finite type. Suppose also that the natural map
        \[
            [Y,Y]\otimes_{\mathbb Z}\mathbb Q\to \prod_{i\leq k}\Hom_{\mathbb Q}(\pi_i(Y)\otimes_{\mathbb Z}\mathbb Q,\pi_i(Y)\otimes_{\mathbb Z}\mathbb Q)
        \]
        is surjective for all $k$. Then, for any equivalence $f:X_p^\wedge\simeq Y_p^\wedge$, there exists an equivalence $X\simeq Y$ lifting $f/p:X/p\simeq Y/p$.
    \end{theorem}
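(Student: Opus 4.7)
The plan is an arithmetic fracture argument in which the hypothesis supplies enough automorphisms of $Y_p^\wedge$ to rectify the compatibility between the $p$-complete and rational parts. Write $V_n = \pi_n(Y) \otimes \mathbb{Q}$ and $W_n = \pi_n(X) \otimes \mathbb{Q}$; these are finite-dimensional $\mathbb{Q}$-vector spaces, and $f$ induces $\mathbb{Q}_p$-linear isomorphisms $f_n : W_n \otimes \mathbb{Q}_p \simeq V_n \otimes \mathbb{Q}_p$, so $\dim_\mathbb{Q} V_n = \dim_\mathbb{Q} W_n$. Since $X$ and $Y$ are $p$-local of finite type, each is recovered as a pullback $Z \simeq Z_\mathbb{Q} \times_{(Z_p^\wedge)_\mathbb{Q}} Z_p^\wedge$. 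Producing $g : X \simeq Y$ with $g/p = f/p$ therefore reduces to finding an automorphism $\beta \in \Aut(Y_p^\wedge)$ with $\beta \equiv 1 \pmod p$ together with a rational equivalence $h : X_\mathbb{Q} \simeq Y_\mathbb{Q}$ such that the rationalized compatibility square for $\beta \circ f$ and $h$ commutes.

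At the level of $\pi_n$, commutativity demands that $(\beta \circ f)_n$ carry the $\mathbb{Q}$-form $W_n \hookrightarrow W_n \otimes \mathbb{Q}_p$ into $V_n \hookrightarrow V_n \otimes \mathbb{Q}_p$. Using the density of $\mathbb{Q}$ in $\mathbb{Q}_p$, equivalently the decomposition $GL_d(\mathbb{Q}_p) = GL_d(\mathbb{Z}_p) \cdot GL_d(\mathbb{Q})$, one finds for each $n$ a $\mathbb{Z}_p$-linear automorphism $\beta_n$ of $\pi_n(Y_p^\wedge)$ achieving this integrality, and sets $h_n = \beta_n \circ f_n|_{W_n}$.

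Realizing the tuple $(\beta_n)_n$ as a single automorphism $\beta$ of $Y_p^\wedge$ is the crux of the argument. Tensoring the hypothesis with $\mathbb{Q}_p$ gives surjectivity $[Y, Y] \otimes \mathbb{Q}_p \twoheadrightarrow \prod_{i \leq k} \End_{\mathbb{Q}_p}(V_i \otimes \mathbb{Q}_p)$ for every $k$. Proceeding inductively along the Postnikov tower of $Y_p^\wedge$, I would build compatible endomorphisms $\beta^{(k)}$ of $\tau_{\leq k} Y_p^\wedge$ realizing $(\beta_n)_{n \leq k}$: at each stage the Postnikov obstruction to extending $\beta^{(k-1)}$ with prescribed $\pi_k$-effect lies in a cohomology group of $Y$, and it is killed by a correction supplied by the hypothesis. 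A Milnor $\lim^1$ argument assembles the tower into the desired $\beta$, which is automatically an equivalence since its action on every $\pi_n$ is invertible by construction.

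Finally, the resulting $g_0 : X \simeq Y$ may fail $g_0/p = f/p$, but the mod-$p$ discrepancy $u = f \circ ((g_0)_p^\wedge)^{-1} \in \Aut(Y_p^\wedge)$ can be absorbed by post-composing $g_0$ with a self-equivalence $v \in \Aut(Y)$ with $v_p^\wedge \equiv u \pmod p$; the hypothesis ensures that $\Aut(Y)$ has large enough image in $\Aut(Y/p)$ (via $\Aut(Y_p^\wedge)$) for such a $v$ to exist. The main obstacle is the crux step: the hypothesis only guarantees surjectivity onto finite truncations, so realizing the infinite tuple $(\beta_n)_n$ requires careful bookkeeping of the Postnikov obstructions and the $\lim^1$ term, and this is the technical heart of the proof.
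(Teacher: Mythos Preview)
Your fracture-square setup is correct and matches the paper's Lemma~\ref{lem:plocallift}: a map $X\to Y_p^\wedge$ lifts to $X\to Y$ precisely when its effect on homotopy groups lands in $\pi_\ast Y\subset\pi_\ast Y_p^\wedge$. The genuine gap is exactly at your self-identified crux: realizing the prescribed tuple $(\beta_n)_n$ by a single endomorphism of $Y_p^\wedge$. Your Postnikov/$\lim^1$ sketch does not work as stated, because the hypothesis only provides, for each $k$, a class in $[Y_p^\wedge,Y_p^\wedge]\otimes\mathbb{Q}_p$ with the prescribed effect on $\pi_{\le k}$; clearing denominators costs an uncontrolled power $p^{N_k}$, the resulting maps are not compatible along the tower, and there is no reason for $\lim^1$ to vanish. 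Fixing the target automorphisms $(\beta_n)$ in advance and then attempting to realize them is the wrong order of operations.

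The missing idea, and the paper's actual argument (Proposition~\ref{prop:AParg}), is to \emph{not} fix the $\beta_n$'s in advance. One modifies $f$ directly: at stage $k$, add a correction $p^{k+2}h_k$ with $h_k\in[X,Y_p^\wedge]$ chosen (via the hypothesis) to have prescribed effect on $\pi_{\le k}$. The point is that since $\mathbb{Z}_p/\mathbb{Z}_{(p)}$ is uniquely divisible, the $p$-local integrality of $\pi_k$ can always be achieved by a correction divisible by an \emph{arbitrarily high} power of $p$, so one may insist on the factor $p^{k+2}$. The series $f+\sum_{k\ge0} p^{k+2}h_k$ then converges because $F(X,Y_p^\wedge)$ is $p$-complete; the limit has $p$-local coefficients on all homotopy groups by construction, hence lifts to $\tilde g:X\to Y$; and $\tilde g/p=f/p$ since every correction is divisible by $p^2$. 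This last point also shows your final paragraph is both unnecessary and unjustified: the hypothesis is a purely rational surjectivity statement and gives no direct control over the image of $\Aut(Y)$ in $\Aut(Y/p)$.
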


    It is easy to see that $Y=\BP\langle 1\rangle$ satisfies the assumption of the theorem using Adams operations (Example \ref{ex:BP1AP}). We shall prove that it is true for $n>1$ in general when $p$ is odd. Informally, it states that there are many endomorphisms of $\BP\langle n\rangle$ that are nontrivial on homotopy groups, and we believe this paper is the first to produce such results.
    \begin{theorem}\label{thm:intro4}
        The assumption on $Y$ of theorem \ref{thm:intro3} is satisfied by $Y=\BP\langle n\rangle$ when $p$ is odd. When  $p=2$, the same holds for $n\leq 3$ and conjecturally for all $n$ depending on the validity of Theorem \ref{thm:intro6}.
    \end{theorem}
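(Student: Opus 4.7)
The plan is to produce an abundant supply of endomorphisms of $\BP\langle n\rangle$ by combining multiplication maps coming from the ring structure with negatively-graded self-maps produced via the vanishing results of Theorem~\ref{thm:intro6}. Since $\pi_\ast\BP\langle n\rangle\otimes\mathbb Q = \mathbb Q[v_1,\dots,v_n]$ has a monomial basis in each homotopy degree, the target $\prod_{i\leq k}\Hom_{\mathbb Q}(\pi_i\otimes\mathbb Q,\pi_i\otimes\mathbb Q)$ decomposes additively into rank-one pieces of the form ``$v^I \mapsto v^J$ on $\pi_d$ and vanishing on $\pi_l$ for the other $l\leq k$,'' with $v^I,v^J$ monomials of the same degree $d\leq k$. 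Realizing an arbitrary tuple $(\phi_i)_{i\leq k}$ therefore reduces to realizing each such rank-one piece.

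The main construction is the composite
\[
    \BP\langle n\rangle \xrightarrow{\ \phi\ } \Sigma^d\BP\langle n\rangle \xrightarrow{\ v^J\,\cdot\ } \BP\langle n\rangle,
\]
where the second map is multiplication by $v^J\in\pi_d\BP\langle n\rangle$ via the ring structure, and $\phi$ is a map whose rational action on $\pi_d$ is the linear functional dual to $v^I$ (sending $v^I\mapsto 1$ and the other monomials of degree $d$ to $0$), and whose rational action on $\pi_l$ for $l\leq k$, $l\neq d$, is zero. Because multiplication by $v^J$ is injective on the polynomial ring $\pi_\ast\BP\langle n\rangle$, this composite realizes the desired rank-one behavior on the first $k$ homotopy groups.

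The heart of the argument is producing $\phi$. Rationally such a map exists trivially because $\BP\langle n\rangle_\mathbb Q$ splits as a direct sum of suspensions of $H\mathbb Q$. To lift this to a $p$-local map, I would use the arithmetic fracture square together with Proposition~\ref{thm:introinj}: the injection $[\BP\langle n\rangle,\Sigma^d\BP\langle n\rangle]\hookrightarrow[\BP\langle n\rangle_p^\wedge,\Sigma^d\BP\langle n\rangle_p^\wedge]$ reduces the problem to producing a $p$-complete map whose rationalization matches the prescribed $\phi_\mathbb Q$. The relevant $E_2$-page is $\Ext_{\mathcal A}^{s,s+d}(H^\ast\BP\langle n\rangle,H^\ast\BP\langle n\rangle)$, and the even-degree permanent cycles on the $0$-line correspond precisely to the rational linear functionals on $\pi_\ast\BP\langle n\rangle$ that we need; Theorem~\ref{thm:intro6} ensures that any potentially obstructive differential into these classes has its source in a region ruled out by the odd-degree vanishing line.

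The main obstacle is showing that the Adams $E_\infty$-page really does contain all the required permanent cycles as $d$ varies and the homotopy degrees range up to $k$. The strength of the vanishing line in Theorem~\ref{thm:intro6} is exactly what makes this possible at odd primes and at $p=2$ for $n\leq 3$; it is the absence of a sharp enough vanishing line at $p=2$ for $n\geq 4$ that obstructs the argument and leaves Conjecture~\ref{conj:intro} open in these cases.
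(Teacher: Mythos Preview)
Your overall strategy matches the paper's: produce maps $\phi^I:\BP\langle n\rangle\to\Sigma^{D}\BP\langle n\rangle$ dual to the monomials $v^I$ (this is Theorem~\ref{thm:intro5}) and compose with multiplication by $v^J$. However, the Adams spectral sequence step is not correct as written. The class $P^I$ on the $0$-line of $\Ext_{E(n)}^{*,*}(\mathbb F_p,\mathcal A//E(n))$ does act as the functional dual to $v^I$ on $\pi_D$, but it is \emph{not} a permanent cycle in general: a differential $d_r$ on $P^I$ lands in topological degree $-D-1$ and Adams filtration $r$, and Theorem~\ref{thm:intro6} only kills such targets when $\epsilon r > D+2-2p^{n+1}$, which fails for small $r$ once $D\geq 2p^{n+1}-2$. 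You also have the direction reversed: the obstruction is differentials \emph{out of} the class (landing in odd topological degree), not differentials into it. The paper fixes this by constructing a cycle $\phi^I=v_0^NP^I+(\text{corrections})$ in Adams filtration $N$; for $N$ large, all potential targets of differentials lie above the vanishing line and hence vanish. The corrections and the computation that $\phi^I$ still acts correctly on homotopy are the content of Propositions~\ref{prop:Extaction} and~\ref{prop:algAP}.

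There are two further issues. First, your ``rank-one'' decomposition asks that $\phi$ vanish on $\pi_l$ for $d<l\leq k$, but the maps $\phi^I$ do not do this; the paper instead shows that the collection $\{v^J\phi^I\}$ acts on rational homotopy by a lower-triangular matrix with invertible diagonal, which suffices for surjectivity. Second, invoking Proposition~\ref{thm:introinj} to pass from $p$-complete to $p$-local is circular: that proposition is proved via Proposition~\ref{prop:fracture}, which already requires the pair $(\BP\langle n\rangle,\BP\langle n\rangle)$ to be preAP, i.e.\ essentially the statement you are trying to prove. The paper establishes the $p$-complete surjectivity directly and then uses Lemma~4.4 to deduce the $p$-local statement.
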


    More precisely, we construct maps from $\BP\langle n\rangle$ to $\BP\langle m\rangle$ for $n\geq m$ as in the following theorem, which may be of independent interest.
    \begin{theorem}\label{thm:intro5}
        Suppose that $(p,m,n)$ satsifies Theorem \ref{thm:intro6}. Let $I=(i_1,\dots,i_n)$ be a sequence of nonnegative integers of length $n$, and let
        \[
            D := \deg(v_1^{i_1}\cdots v_n^{i_n}).
        \]
        Then, for any sufficiently large integer $N$, there is a map\[
        \phi^I:\BP\langle n\rangle\to\Sigma^{D}\BP\langle m\rangle\]
        that sends $v_1^{i_1}\cdots v_n^{i_n}$ to $p^N$ and other monomials of degree $D$ to $0$.
    \end{theorem}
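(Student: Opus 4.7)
My plan is to prove this via a soft rational surjectivity argument combined with Proposition \ref{thm:introinj}, reducing the existence of $\phi^I$ to a purely $p$-adic statement about the image of evaluation on $\pi_D$.

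Consider the evaluation map
\[
\mathrm{ev}\colon [\BP\langle n\rangle_p^\wedge, \Sigma^D \BP\langle m\rangle_p^\wedge] \longrightarrow \Hom_{\mathbb Z_p}(\pi_D \BP\langle n\rangle_p^\wedge, \pi_0 \BP\langle m\rangle_p^\wedge) \cong \mathbb Z_p^b,
\]
where $b$ is the number of monomials $v_1^{j_1}\cdots v_n^{j_n}$ of degree $D$. Rationally, both $\BP\langle n\rangle \otimes \mathbb Q$ and $\BP\langle m\rangle \otimes \mathbb Q$ are generalized Eilenberg-MacLane spectra, so after tensoring with $\mathbb Q_p$ the mapping set $[\BP\langle n\rangle_p^\wedge, \Sigma^D \BP\langle m\rangle_p^\wedge] \otimes \mathbb Q_p$ splits as a product of $\Hom$ groups between rationalized homotopy groups. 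Projection to the $(\pi_D, \pi_0)$ component is visibly surjective, so $\mathrm{ev} \otimes \mathbb Q_p$ is surjective onto $\mathbb Q_p^b$.

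Consequently, $\operatorname{im}(\mathrm{ev}) \subset \mathbb Z_p^b$ is a $\mathbb Z_p$-submodule of full $\mathbb Q_p$-rank. Since $\mathbb Z_p^b$ is Noetherian, this submodule is finitely generated; by the structure theorem for modules over a DVR, it takes the form $\prod_i p^{a_i}\mathbb Z_p$ in a suitable basis of $\mathbb Z_p^b$ with all $a_i < \infty$. Hence $p^{N_0}\mathbb Z_p^b \subset \operatorname{im}(\mathrm{ev})$ for $N_0 := \max_i a_i$, and for every $N \geq N_0$ there exists $\tilde\phi^I \in [\BP\langle n\rangle_p^\wedge, \Sigma^D \BP\langle m\rangle_p^\wedge]$ with $\mathrm{ev}(\tilde\phi^I) = p^N e_I$, where $e_I$ is the basis vector corresponding to $v_1^{i_1}\cdots v_n^{i_n}$. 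This $p$-complete map has precisely the required action on $\pi_D$.

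Finally, to descend from $\tilde\phi^I$ to a $p$-local $\phi^I$, apply Proposition \ref{thm:introinj} together with Lemma \ref{lem:plocallift}, which characterizes the image of $[\BP\langle n\rangle, \Sigma^D\BP\langle m\rangle] \hookrightarrow [\BP\langle n\rangle_p^\wedge, \Sigma^D\BP\langle m\rangle_p^\wedge]$ via compatibility with rationalization; since $\tilde\phi^I$ has $\mathbb Z$-valued action on $\pi_D$, it lies in this image. The main obstacle is this last step: the descent relies on Proposition \ref{thm:introinj}, whose proof requires the vanishing line of Theorem \ref{thm:intro6}, explaining the hypothesis in the theorem statement. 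A secondary technical point is justifying that mapping spectra commute with rationalization in the relevant sense, which follows from the finite-type hypotheses on source and target.
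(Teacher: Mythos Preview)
Your argument has a genuine gap at step 2, and it is not a technicality: it is the entire content of the theorem.

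You claim that because $\BP\langle n\rangle\otimes\mathbb Q$ and $\BP\langle m\rangle\otimes\mathbb Q$ are generalized Eilenberg--MacLane spectra, the map
\[
[\BP\langle n\rangle_p^\wedge,\Sigma^D\BP\langle m\rangle_p^\wedge]\otimes_{\mathbb Z}\mathbb Q_p
\;\longrightarrow\;
\prod_c \Hom_{\mathbb Q_p}\bigl(\pi_c\otimes\mathbb Q_p,\pi_{c-D}\otimes\mathbb Q_p\bigr)
\]
is an isomorphism (or at least surjects onto the $c=D$ factor). This would follow if $F(X,Y)\otimes\mathbb Q\simeq F(X,Y\otimes\mathbb Q)$, but that identification requires $X$ to be a \emph{finite} spectrum, not merely of finite type. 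The paper's own Example~\ref{ex:Zplusell} gives the counterexample: $[\mathbb Z_p,\ell_p^\wedge]=0$ even though $\Hom_{\mathbb Q_p}(\pi_0\otimes\mathbb Q_p,\pi_0\otimes\mathbb Q_p)=\mathbb Q_p$. So your ``secondary technical point'' is exactly where the argument breaks, and finite-type hypotheses do not rescue it. Proving that $\mathrm{ev}\otimes\mathbb Q_p$ is surjective for $n\geq m$ is precisely the substance of Proposition~\ref{prop:algAP} and Theorem~\ref{thm:BPnAP}: the paper builds explicit cycles $v_0^NP^I+\cdots$ in the Koszul complex, computes their action on $\mathbb F_p[v_0,\dots,v_n]$ via the combinatorics of Proposition~\ref{prop:Extaction}, and then uses the vanishing line to promote them to permanent cycles in the Adams spectral sequence. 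There is no soft shortcut here.

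Your step 4 compounds the problem. First, Lemma~\ref{lem:plocallift} requires the induced map on $\pi_\ast$ to have $p$-local coefficients in \emph{every} degree, not just in degree $D$; you have no control over $\pi_\ast(\tilde\phi^I)$ away from $\pi_D$. Second, and more seriously, Proposition~\ref{thm:introinj} is proved in the paper (Corollary~\ref{cor:inj}) as a consequence of the AP property, which in turn rests on the very surjectivity you assumed for free in step 2. So invoking it here is circular.
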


    \begin{remark}
        We note again that it is not always true that one can recover the $p$-local homotopy type from its $p$-completion. In Example \ref{ex:Zplusell}, we shall see that there exists a $p$-local spectrum of finite type which is not equivalent to $\BP\langle0\rangle\oplus\BP\langle 1\rangle$ but becomes equivalent after $p$-completion.
    \end{remark}

    \subsection{Further questions}
    The $E_2$-page of the Adams spectral sequence for maps from $\BP\langle n\rangle$ to $\BP\langle m\rangle$ can be computed explicitly when $m=1$ using \cite[Table 3.9]{AP}. 
    \begin{question}
        What is the optimal (i.e. largest) value of $\epsilon$ in Theorem \ref{thm:intro6}? Explicit computation tells us that when $n=m=1$, the optimal value of $\epsilon$ is $2(p-1)^2$.
    \end{question}

    \begin{question}
        For $n\geq m$, does the Adams spectral sequence
        \[
            E_2 = \Ext_{\mathcal A}^{s,t}(H^\ast(\BP\langle m\rangle;\mathbb F_p),H^\ast(\BP\langle n\rangle))\Rightarrow [\Sigma^{t-s}\BP\langle n\rangle_p^\wedge,\BP\langle m\rangle_p^\wedge]
        \]
        degenerate at the $E_2$-page?
    \end{question}
    
    Adams and Priddy in \cite[Theorem 1.1]{AP} also prove the analogues of Theorems \ref{thm:intro1} and \ref{thm:introplocal} for $\ko$, the connective real $K$-theory spectrum. More precisely, they prove that if $X$ is a $2$-local spectrum of finite type such that $H^\ast(X;\mathbb F_2)\simeq H^\ast(\ko;\mathbb F_2)$ as modules over the Steenrod algebra, then $X$ is equivalent to $\ko_{(2)}$. This result is used in their paper to prove that $\BSO_{(2)}$ has a unique $\mathbb E_\infty$-space structure, which is applied to the identification of $\gl_1(\ko)$.

    Similarly, as previously asked by Devalapurkar in \cite[Conjecture F]{Sanath}, it would be interesting to know if $\tmf$ or other related spectra satisfies similar properties.
    \begin{question}
        Consider the Adams spectral sequence
        \[
            E_2^{s,t}=\Ext_{\mathcal A}^{s,t}(H^\ast(\tmf;\mathbb F_p),H^\ast(\tmf;\mathbb F_p)) \Rightarrow [\Sigma^{t-s}\tmf_p^\wedge,\tmf_p^\wedge].
        \]
        Then, for each integer $s\geq 2$, is there a page $r$ such that $E_r^{s,s-1}=0$?
    \end{question}
    For $p>3$, we will give a positive answer to this question since $\tmf_{(p)}$ is a sum of shifts of $\BP\langle 2\rangle$  by Theorem \ref{thm:introtmf}. To recover the $p$-local homotopy type from the $p$-complete homotopy type of $\tmf$ as in Theorem \ref{thm:introplocal}, we will prove the following theorem in Section \ref{sec:AParg}.
    \begin{theorem}
        For any prime $p$, the assumption on $Y$ of Theorem \ref{thm:intro3} is satisfied for $Y=\tmf_{(p)}$.
    \end{theorem}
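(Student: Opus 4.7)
The plan is to show, for every $k \geq 0$, that the natural map
\[
    [\tmf_{(p)}, \tmf_{(p)}] \otimes_{\mathbb Z} \mathbb Q \to \prod_{i \leq k}\Hom_{\mathbb Q}(\pi_i(\tmf_{(p)}) \otimes \mathbb Q, \pi_i(\tmf_{(p)}) \otimes \mathbb Q)
\]
is surjective. Since $\pi_*\tmf \otimes \mathbb Q \cong \mathbb Q[c_4, c_6]$ with $|c_4| = 8$ and $|c_6| = 12$, the target is a finite product of endomorphism rings of finite-dimensional $\mathbb Q$-vector spaces with bases consisting of monomials $c_4^a c_6^b$. The rank is $1$ in most degrees but can exceed $1$ in special degrees (starting at $24$, where $c_4^3$ and $c_6^2$ coexist), so one must construct endomorphisms that go beyond scalar multiplication in each degree, which is the substantive content.

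For primes $p \geq 5$, Theorem \ref{thm:introtmf} splits $\tmf_{(p)}$ as a finite direct sum $\bigoplus_j \Sigma^{2d_j}\BP\langle 2\rangle_{(p)}$, so
\[
    [\tmf_{(p)}, \tmf_{(p)}] = \bigoplus_{j,j'} [\Sigma^{2d_j}\BP\langle 2\rangle_{(p)}, \Sigma^{2d_{j'}}\BP\langle 2\rangle_{(p)}].
\]
Each summand admits the maps $\phi^I$ produced by Theorem \ref{thm:intro5} (with $n = m = 2$), which rationally detect the individual monomials $v_1^{i_1} v_2^{i_2}$. Rational linear combinations of their suspensions across all pairs of summands span every $\mathbb Q$-linear endomorphism of any finite initial segment of $\pi_*\tmf_{(p)} \otimes \mathbb Q$, and a matching count of dimensions gives the surjectivity.

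For $p \in \{2, 3\}$, where $\tmf_{(p)}$ does not split so conveniently, I would combine two families of operations. First, the Adams operations $\psi^\ell$ on $\tmf_{(p)}$ for primes $\ell$ coprime to $6p$, which act on $c_4^a c_6^b$ by multiplication by $\ell^{4a + 6b}$. Second, for each prime $\ell \neq p$, the $\ell$-isogeny correspondence on the moduli stack of elliptic curves lifts via the Goerss--Hopkins--Miller sheaf of $\mathbb E_\infty$-ring spectra, and the transfer along its finite flat projection yields Hecke operators $T_\ell : \tmf_{(p)} \to \tmf_{(p)}$ realizing the classical Hecke operators on modular forms. Rationally, the algebra generated by $\{\psi^\ell, T_\ell\}_{\ell \neq p}$ acts on $\mathbb Q[c_4, c_6]$ with simultaneous eigenbasis the Hecke eigenforms of each weight; by strong multiplicity one, this algebra separates all such eigenforms and hence spans the full grading-preserving endomorphism algebra of any finite truncation of $\pi_*\tmf_{(p)} \otimes \mathbb Q$.

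The main obstacle is the case $p \in \{2, 3\}$: one must carefully lift the isogeny correspondence to an honest endomorphism of the spectrum $\tmf_{(p)}$, rather than a mere rational operation on modular forms, by checking that the derived analogue of the correspondence is sufficiently finite and flat after $p$-localization for the auxiliary primes $\ell \neq p$ being used. Once the $T_\ell$'s are realized at the spectrum level, the remaining assertion that polynomials in $\psi^\ell$ and $T_\ell$ separate a chosen basis of Hecke eigenforms within each finite truncation is standard Hecke theory.
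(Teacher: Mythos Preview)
Your approach for $p\geq 5$ is correct and matches the paper: both use the splitting of $\tmf_{(p)}$ into shifted copies of $\BP\langle 2\rangle$ together with the maps $\phi^I$.

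For $p\in\{2,3\}$, however, there is a genuine gap. The target of the map you must surject onto is $\prod_{i\leq k}\End_{\mathbb Q}(\pi_i(\tmf)\otimes\mathbb Q)$, and already at $i=24$ this is the full matrix algebra $M_2(\mathbb Q)$, since $\pi_{24}\tmf\otimes\mathbb Q$ is two-dimensional (spanned by the Eisenstein series $E_{12}$ and the cusp form $\Delta$). The operators you propose, the $\psi^\ell$ and the $T_\ell$, all commute with one another and are simultaneously diagonalizable in the Hecke eigenbasis. Hence the $\mathbb Q$-algebra they generate inside $\End_{\mathbb Q}(\pi_{24}\tmf\otimes\mathbb Q)$ consists only of diagonal matrices in that basis, a two-dimensional subspace of the four-dimensional $M_2(\mathbb Q)$. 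Strong multiplicity one tells you the eigenvalue systems are distinct, which lets you separate eigenforms and realize all diagonal projections, but it cannot produce an operator taking $\Delta$ to $E_{12}$. So the claimed surjectivity fails for this family of operators.

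The paper's route at $p\in\{2,3\}$ sidesteps this by not working directly with $\tmf$ at all. It uses the finite complex $A$ (from \cite{Htmf}) with $\tmf_{(p)}\otimes A$ a finite sum of suspensions of $\BP\langle 2\rangle$, together with the general principle (Proposition~\ref{prop:tensorfin}) that tensoring either variable by a rationally nontrivial finite spectrum preserves and reflects the AP property. Thus the question for $\tmf_{(p)}$ reduces to the question for $\BP\langle 2\rangle$, which is handled by Theorem~\ref{thm:BPnAP}. The key point is that the $\phi^I$ for $\BP\langle 2\rangle$ are not ring maps and do not commute, so they can and do hit off-diagonal entries; your Hecke/Adams operators cannot play this role.
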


    Recall that there is a genuine $C_2$-spectrum $\BP\langle n\rangle_{\mathbb R}$ induced by the complex conjugation action on the complex cobordism spectrum \cite{HK}. It would be interesting to know if similar statements can be made about $\BP\langle n\rangle_{\mathbb R}$.
    \begin{question} 
        Can we state and prove similar theorems for $\BP\langle n\rangle_{\mathbb R}$?
    \end{question}

    Combining Theorems \ref{thm:intro3}, \ref{thm:BPnAP} and Proposition \ref{prop:tensorfin}, we shall see in the paper that $\BP\langle n\rangle\otimes A$ for any finite spectrum $A$ has a unique $p$-local model, i.e. satsifies the analogue of  Theorem \ref{thm:introplocal}. It is also easy to see that an analogue of Theorem \ref{thm:introplocal} is satisfied by finite $p$-local spectra or bounded spectra. It would be interesting to know if Theorem \ref{thm:introplocal} is true for a much braoder class of spectra, given the interesting counterexample in Example \ref{ex:Zplusell}.
    \begin{question}
         Can we find a large class of spectra that satisfies the analogue of Theorem \ref{thm:introplocal}?
    \end{question}
    
    \subsection{Acknowledgements}
    I would like to thank Mark Behrens, Jeremy Hahn, and Ishan Levy helpful conversations related to this work. I would especially like to thank Vigleik Angeltveit for an enlightening discussion on his original work with John Lind.
    \subsection{Notations and conventions}
    \begin{itemize}
        \item $p$ is a prime number fixed throughout the paper and $\mathcal A$ denotes the mod $p$ Steenrod algebra.
        \item A $p$-local spectrum $X$ is said to be of finite type if it is bounded below and every homotopy group $\pi_i X$ is finitely generated over $\mathbb Z_{(p)}$.
        \item Given two spectra $X$ and $Y$, we write $\Map(X,Y)$ for the mapping space, $F(X,Y)$ for the function spectrum from $X$ to $Y$, and $[X,Y]$ for $\pi_0F(X,Y)$.
        \item The notation $\Lambda(x_0,\dots,x_n)$ denotes an exterior algebra over $\mathbb F_p$ generated by $x_0,\dots,x_n$.
        \item All sequences of integers in the paper, whether of finite length or infinite length, should be considered as elements of $\mathbb Z^{\oplus\mathbb N}$. In particular, all sequences are assumed to have finitely many nonzero entries, and additions and subtractions should be done with the group structure of $\mathbb Z^{\oplus\mathbb N}$.
        
        We write
        \begin{align*}
            \mathbf f_0=\mathbf e_1 &= (1,0,0,\dots)\\
            \mathbf f_1=\mathbf e_2 &= (0,1,0,\dots)\\
            &\cdots
        \end{align*}
        for basic integer vectors. We shall use $\mathbf e_i$'s for $1$-base indexed sequences and $\mathbf f_i$'s for $0$-base indexed sequences. Unfortunately, both situations occur frequently.
    \end{itemize}
    \section{Cohomology of \textbpn}\label{sec:HBPn}
    \subsection{Steenrod algebra}
    In this section, we review the structure of the Steenrod algebra in Milnor's work \cite{Milnor}.
    
    The dual Steenrod algebra $\pi_\ast(\mathbb F_p\otimes\mathbb F_p)$ is a graded commutative $\mathbb F_p$-Hopf algebra which is isomorphic as an algebra to
    \[
        \mathbb F_p[\overline{\xi_1},\overline{\xi_2},\dots]\otimes_{\mathbb F_p}\Lambda(\overline{\tau_0},\overline{\tau_1},\dots)
    \]
    with degrees $|\overline{\xi_i}|=2p^i-2$ and $|\overline{\tau_i}|=2p^i-1$. The generators $\overline{\xi_1},\overline{\xi_2},\dots,\overline{\tau_0},\overline{\tau_1}$ are the conjugate Milnor generators. When $p=2$, we have $\overline{\tau_i}^2 = \overline{\xi_{i+1}}$ instead of $\overline{\tau_i}^2=0$, but this will not affect any of our arguments.

    The Steenrod algebra $\mathcal A=\pi_\ast F(\mathbb F_p,\mathbb F_p)$ is the dual Hopf algebra of the dual Steenrod algebra. For a sequence of nonnegative integers $I=(i_1,i_2,\dots)$ and a sequence of $0$'s and $1$'s $E=(\epsilon_0,\epsilon_1,\dots)$, let us write
    \[
        \{P^IQ^E\}_{I,E}
    \]
    for the dual basis to the monomial basis
    \[
        \{\overline{\xi_1}^{i_1}\overline{\xi_2}^{i_2}\cdots\overline{\tau_0}^{\epsilon_0}\overline{\tau_1}^{\epsilon_1}\cdots\}_{I,E}.
    \]
    Also, let us write $Q_j$ for the dual of $\overline{\tau_j}$. We shall loosely refer to a basis element $P^IQ^E$ as a \emph{monomial}. By the following lemma, the notation $P^IQ^E$ is less ambiguous since it is actually the product of $P^I$ and $Q^E$.
  
    \begin{lemma}[\cite{Milnor}]\label{lem:steenrod}
        \begin{enumerate}[leftmargin=20pt]
            \item We have
            \[
                P^IQ^E = P^I \cdot Q_0^{\epsilon_0} \cdot Q_1^{\epsilon_1}\cdots
            \]
            where $\cdot$ denotes the multiplication of $\mathcal A$.
            \item The element $Q_k$ is primitive, and the subalgebra generated by $Q_0,Q_1,\dots$ is an exterior algebra $\Lambda(Q_0,Q_1,\dots)$.
            \item We have a formula
            \begin{align*}
                Q_k P^I &= P^IQ_k+\sum_{j=1}^\infty P^{I-p^k \mathbf e_j}Q_{k+j}\\
                &=P^IQ_k + P^{I-(p^k,0,\dots)}Q_{k+1} + P^{I-(0,p^k,0,\dots)}Q_{k+2}+\cdots
            \end{align*}
            for the left multiplication by $Q_k$'s. Here, if a sequence in a superscript of $P$ contains a negative number, then that whole term should read zero.
        \end{enumerate}
    \end{lemma}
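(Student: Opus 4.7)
The plan is to derive all three parts from the Hopf algebra duality between $\mathcal{A}$ and $\mathcal{A}_{\ast} := \pi_{\ast}(\mathbb{F}_p \otimes \mathbb{F}_p)$, using Milnor's explicit formulas for the comultiplication on $\mathcal{A}_{\ast}$, which dualize to the multiplication on $\mathcal{A}$. For any $a, b \in \mathcal{A}$ and $x \in \mathcal{A}_{\ast}$, we have $\langle a \cdot b, x\rangle = \langle a \otimes b, \Delta x\rangle$, so every multiplicative identity in $\mathcal{A}$ can be verified by testing it against the monomial basis of $\mathcal{A}_{\ast}$. The key input is the comultiplication on the conjugate generators, namely $\Delta(\overline{\xi_k}) = \sum_{i+j=k}\overline{\xi_j} \otimes \overline{\xi_i}^{p^j}$ and $\Delta(\overline{\tau_k}) = 1 \otimes \overline{\tau_k} + \sum_{i+j=k}\overline{\tau_j} \otimes \overline{\xi_i}^{p^j}$ (with $\overline{\xi_0} := 1$), which follow from Milnor's standard formulas via the antipode.

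For part (a), I would expand $\Delta(\overline{\xi}^J\overline{\tau}^F)$ multiplicatively using the formulas above. Since $P^I$ pairs nontrivially only with pure $\overline{\xi}$-monomials and $Q^E$ only with pure $\overline{\tau}$-monomials, the only summand contributing to $\langle P^I \otimes Q^E, \Delta(\overline{\xi}^J\overline{\tau}^F)\rangle$ is the term $\overline{\xi}^J \otimes \overline{\tau}^F$ obtained by selecting the $1 \otimes \overline{\xi_i}$ piece from each $\Delta(\overline{\xi_i})$ and the $1 \otimes \overline{\tau_l}$ piece from each $\Delta(\overline{\tau_l})$. This evaluates to $\delta_{I,J}\delta_{E,F}$, proving $P^I \cdot Q_0^{\epsilon_0}Q_1^{\epsilon_1}\cdots = P^IQ^E$.

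For part (b), primitivity of $Q_k$ is equivalent to $\langle Q_k, xy\rangle = 0$ for every pair of positive-degree monomials $x, y \in \mathcal{A}_{\ast}$. Since $\overline{\tau_k}$ is an algebra generator of $\mathcal{A}_{\ast} = \mathbb{F}_p[\overline{\xi_i}] \otimes \Lambda(\overline{\tau_i})$, it cannot be expressed as a product of two positive-degree monomials, so the pairing vanishes. The exterior structure then follows automatically: each $Q_k$ has odd degree $2p^k - 1$, so at odd $p$ graded commutativity of primitives gives $Q_k^2 = 0$ and $Q_jQ_k + Q_kQ_j = 0$; combined with part (a), the subalgebra generated by the $Q_k$'s is $\Lambda(Q_0, Q_1, \ldots)$.

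For part (c), the main computational content, I would compute $\langle Q_kP^I, \overline{\xi}^J\overline{\tau}^F\rangle$ by retaining only those summands of $\Delta(\overline{\xi}^J\overline{\tau}^F)$ whose left-hand tensor factor equals $\overline{\tau_k}$ and whose right-hand factor is purely in the $\overline{\xi}$'s. A short combinatorial analysis shows these constraints force $F$ to be a single generator $\mathbf{f}_{k+j}$ for some $j \geq 0$, and the unique contributing summand then comes from the term $\overline{\tau_k} \otimes \overline{\xi_j}^{p^k}$ in $\Delta(\overline{\tau_{k+j}})$ (using $\overline{\xi_0} = 1$ when $j=0$). This inserts an extra factor of $\overline{\xi_j}^{p^k}$ into the right-hand $\overline{\xi}$-monomial, so $P^I$ pairs nontrivially iff $J = I - p^k\mathbf{e}_j$. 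Summing these contributions recovers $Q_kP^I = P^IQ_k + \sum_{j \geq 1} P^{I - p^k\mathbf{e}_j}Q_{k+j}$. The only real obstacle is to carefully track signs and index conventions for the \emph{conjugate} Milnor generators (rather than the standard $\xi$'s and $\tau$'s); the overall form of each identity is forced by Hopf algebra duality.
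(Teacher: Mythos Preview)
The paper does not actually prove this lemma; it is stated with a citation to Milnor and used as a black box. Your proposal, by contrast, supplies a genuine derivation via Hopf algebra duality, which is exactly how one extracts these identities from Milnor's description of $\mathcal A_\ast$. The overall strategy and the computation in part~(c) are correct.

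Two small points are worth tightening. In part~(a), the term you must select from each $\Delta(\overline{\xi_i})$ is $\overline{\xi_i}\otimes 1$, not $1\otimes\overline{\xi_i}$: you want the left tensor factor to be a pure $\overline{\xi}$-monomial (to pair with $P^I$) and the right factor to be a pure $\overline{\tau}$-monomial (to pair with $Q^E$). With that correction the argument goes through and yields $\overline{\xi}^J\otimes\overline{\tau}^F$ as you claim. In part~(b), your appeal to graded commutativity only handles odd primes; at $p=2$ one must check $Q_k^2=0$ directly (e.g.\ by pairing $Q_k\cdot Q_k$ against monomials and using that $\overline{\tau_k}^2=\overline{\xi_{k+1}}$ is a $\overline{\xi}$-monomial, which $Q_k\otimes Q_k$ annihilates). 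Neither issue affects the soundness of your approach.
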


    The monomial $P^{(i_1,i_2,\dots)}Q_0^{\epsilon_0}Q_1^{\epsilon_1}\cdots$ has degree
    \[
        -\sum_{j\geq 1} (2p^{j}-2) i_j - \sum_{j\geq 0} (2p^{j}-1)\epsilon_j
    \]
    in the homological convention. We recall another grading of the Steenrod algebra that will be essential in this paper.
    \begin{definition}\label{def:weight}
    The \emph{weight} of a monomial $P^{(i_1,i_2,\dots)}Q_0^{\epsilon_0}Q_1^{\epsilon_1}\cdots$ is defined to be
    \[
        \sum_{j\geq 1} 2p^{j}  i_j+ \sum_{j\geq 0} 2p^{j} \epsilon_j.
    \]
    \end{definition}
    
    \subsection{Cohomology of \textbpn}
    In this section, we review the cohomology of $\BP\langle n\rangle$ and study several gradings and filtrations on this cohomology group.
    \begin{definition}
        A \emph{form of $\BP\langle n\rangle$} is a $\BP$-module 
        \[
            \BP/(v_{n+1},v_{n+2},\dots)
        \]
        for some choice of generators $v_1,v_2,\dots\in\pi_\ast\BP$.
    \end{definition}
    
    Let $E(n)\subseteq\mathcal A$ be the subalgebra $\Lambda(Q_0,\dots,Q_n)$ of $\mathcal A$. The $\mathbb F_p$-cohomology of $\BP\langle n\rangle$ is
    \[
        H^\ast(\BP\langle n\rangle;\mathbb F_p) = \mathcal A//E(n) := \mathcal A\otimes_{E(n)}\mathbb F_p
    \]
    as a left $\mathcal A$-module regardless of the form \cite[Proposition 1.7]{WilsonBP}. By Lemma \ref{lem:steenrod}(a), it is generated, as an $\mathbb F_p$-vector space, by the elements $P^IQ^{(\epsilon_0,\epsilon_1,\dots)}$ such that $\epsilon_j=0$ for $j\leq n$. We shall mainly be interested in the left $E(n)$-module structure of $\mathcal A//E(n)$.

    \begin{lemma}\label{lem:formula}
        For $0\leq k\leq n$, the left action of $Q_k$ on $\mathcal A//E(n)$ is given by the formula
        \[
            Q_k \cdot P^I = P^{I-p^k\mathbf e_{n+1-k}} Q_{n+1} + P^{I-p^k\mathbf e_{n+2-k}} Q_{n+2}+\cdots
        \]
        extended right-$\Lambda(Q_{n+1},Q_{n+2},\dots)$-linearly.
        
        The left action of $Q_k$ for $0\leq k\leq n$ preserves the weight grading of $\mathcal A//E(n)$. Thus, $\mathcal A//E(n)$ splits into a direct sum of $E(n)$-modules according to the weights and each summand is a finite-dimensional $\mathbb F_p$-vector space.
    \end{lemma}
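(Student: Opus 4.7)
The plan is to derive the formula by reducing Milnor's commutation identity from Lemma \ref{lem:steenrod}(c) modulo the right action of $E(n)$, and then to verify weight preservation by a direct computation on monomials.

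For the formula, I would start from $Q_k P^I = P^I Q_k + \sum_{j \geq 1} P^{I - p^k \mathbf e_j} Q_{k+j}$ in $\mathcal A$ and project to $\mathcal A//E(n) = \mathcal A \otimes_{E(n)} \mathbb F_p$. In the quotient, a term $P^J Q_l$ vanishes precisely when $Q_l \in E(n)$, i.e., when $l \leq n$. Since $0 \leq k \leq n$, the leading term $P^I Q_k$ is killed, and among the remaining summands $P^{I - p^k \mathbf e_j} Q_{k+j}$ only those with $k + j \geq n+1$ survive. Reindexing $l := k + j$ produces the claimed formula $Q_k \cdot P^I = \sum_{l \geq n+1} P^{I - p^k \mathbf e_{l-k}} Q_l$. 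The right-$\Lambda(Q_{n+1}, Q_{n+2}, \dots)$-linearity is then automatic: left multiplication by $Q_k$ in $\mathcal A$ commutes with right multiplication by any element, and right multiplication by $Q_l$ for $l \geq n+1$ descends to a well-defined action on the quotient, so $Q_k \cdot (P^I Q_{l_1} \cdots Q_{l_s}) = (Q_k \cdot P^I) \cdot Q_{l_1} \cdots Q_{l_s}$.

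For the weight statement, each summand $P^{I - p^k \mathbf e_{l-k}} Q_l$ in the formula has weight
\[
\sum_{j \geq 1} 2p^j i_j - 2p^{l-k} \cdot p^k + 2p^l = \sum_{j \geq 1} 2p^j i_j,
\]
matching the weight of $P^I$ exactly. Right-multiplying by $Q_{l_1} \cdots Q_{l_s}$ with all $l_i \geq n+1$ shifts both sides by the same amount $\sum_i 2p^{l_i}$, so each $Q_k$ with $0 \leq k \leq n$ preserves the weight grading and $\mathcal A//E(n)$ decomposes as a direct sum of $E(n)$-submodules indexed by weight. Finite dimensionality of each weight summand follows from the bounds $i_j \leq w/(2p^j)$, $\epsilon_j \in \{0,1\}$, together with the finiteness of the set of indices $j$ satisfying $2p^j \leq w$.

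The whole argument is essentially bookkeeping once Milnor's formula is available; the only delicate step I expect to require care is the index translation $j \mapsto l - k$ needed to match the statement, and a careful check that right-$\Lambda(Q_{n+1},Q_{n+2},\dots)$-linearity genuinely respects the exterior relations when some $l_i$ coincides with the index $l$ produced by the formula (in which case the corresponding summand simply vanishes in the exterior algebra).
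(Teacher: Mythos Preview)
Your proposal is correct and follows essentially the same approach as the paper: derive the formula by reducing Milnor's commutation identity (Lemma~\ref{lem:steenrod}(c)) modulo the right $E(n)$-action, and then verify weight preservation by a direct computation on the surviving terms. The paper's proof is more terse but makes exactly the same moves.
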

    \begin{proof}
        The formula follows from Lemma \ref{lem:steenrod}(c), and we can check that the right hand side has the same weight as $P^I$. Indeed, $P^{I-p^k\mathbf e_{n+\ell-k}}$ has weight $p^k(2p^{n+\ell-k})=2p^{n+\ell}$ less than that of $P^I$, which is compensated by $Q_{n+\ell}$. The last assertion is obvious.
    \end{proof}
    This action satisfies the following exactness property.
    \begin{proposition}[{\cite[Lemma 3.3]{AL}}]\label{prop:exact}
        Let $x\in\mathcal A//E(n)$ be an element of odd degree, and let $0\leq k\leq n$ be an integer. If $Q_k x = 0$, then there exists an element $y\in\mathcal A//E(n)$ such that $x=Q_ky$.
    \end{proposition}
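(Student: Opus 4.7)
The plan is to prove the stronger statement that $H^*(\mathcal A//E(n);Q_k)$, the cohomology of left multiplication by $Q_k$ on $\mathcal A//E(n)$, is concentrated in \emph{even} degrees; the proposition then follows immediately.

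By Lemma \ref{lem:formula}, the operator $Q_k$ acts on the basis $\{P^I Q^E\}$ of $\mathcal A//E(n)$ (with $E$ a finite subset of $\{n+1,n+2,\dots\}$) as the Koszul-type differential
\[
    Q_k(P^I Q^E) \;=\; \sum_{j\geq 1} P^{I-p^k\mathbf e_{n+j-k}}\, (Q_{n+j}\cdot Q^E),
\]
where the product $Q_{n+j}\cdot Q^E$ vanishes if $n+j\in E$ and otherwise equals the exterior product with the usual Koszul sign. The coordinates $i_1,\dots,i_{n-k}$ of $I$ are untouched by the differential, so fixing them reduces everything to the active part; reindexing $\alpha_j:=i_{n+j-k}$ for $j\geq 1$ and writing each $\alpha_j=p^k q_j+r_j$ with $0\leq r_j<p^k$, subtracting $p^k$ from $\alpha_j$ amounts to subtracting $1$ from $q_j$ while fixing $r_j$. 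The complex therefore splits as a direct sum over residue tuples $r=(r_1,r_2,\dots)$.

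Within each fixed-$r$ summand, I introduce the further invariant
\[
    q_0 \;:=\; q + \mathbf e_{\{j\,:\, n+j\in E\}},
\]
which a direct check shows is preserved by $d$. The $(r,q_0)$-subcomplex is spanned by basis elements with $(q,E)=(q_0-\mathbf e_{S'},\{n+j:j\in S'\})$ for subsets $S'\subseteq \operatorname{supp}(q_0)$, and on this basis the differential becomes
\[
    (q_0-\mathbf e_{S'})\otimes Q^{\{n+j\,:\, j\in S'\}} \;\longmapsto\; \sum_{j\in\operatorname{supp}(q_0)\setminus S'}\pm\,(q_0-\mathbf e_{S'\cup\{j\}})\otimes Q^{\{n+j'\,:\, j'\in S'\cup\{j\}\}},
\]
with signs inherited from the Koszul ordering $Q_{n+1}<Q_{n+2}<\cdots$. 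This is precisely the augmented simplicial cochain complex of the full simplex on the vertex set $\operatorname{supp}(q_0)$.

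Since a nonempty simplex is contractible, this subcomplex is acyclic unless $q_0=0$, in which case the only surviving basis element is the monomial $P^I$ with no $Q$-factor and with $i_m<p^k$ for every $m\geq n+1-k$, sitting in cohomological degree zero. Such monomials all have total degree $\sum_m (2p^m-2)i_m$, which is even, so $H^*(\mathcal A//E(n);Q_k)$ lies entirely in even degrees, proving the proposition. The main technical hurdle is the sign bookkeeping: one must verify that the Koszul signs produced by the exterior algebra $\Lambda(Q_{n+1},Q_{n+2},\dots)$ genuinely match those of the simplicial coboundary, but this is a direct check.
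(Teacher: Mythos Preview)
Your argument is correct. The paper itself does not supply a proof of this proposition; it simply cites \cite[Lemma~3.3]{AL}, so there is no in-paper argument to compare against. Your approach---splitting $\mathcal A//E(n)$ first over the inert coordinates $i_1,\dots,i_{n-k}$ and the residues $r_j=\alpha_j\bmod p^k$, and then over the invariant $q_0=q+\mathbf e_S$---is a clean, self-contained route: on each $(r,q_0)$-piece the remaining complex is precisely the augmented simplicial cochain complex of the full simplex on $\operatorname{supp}(q_0)$, which is acyclic unless $q_0=0$. This actually identifies $H^*(\mathcal A//E(n);Q_k)$ explicitly as the span of the monomials $P^I$ with $i_m<p^k$ for all $m\geq n+1-k$, a statement strictly stronger than the proposition. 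The sign check you flag is indeed routine: left-multiplying $Q_{n+j}$ into $Q_{n+j_1}\cdots Q_{n+j_s}$ (indices increasing) produces the sign $(-1)^{|\{l:\,j_l<j\}|}$, which is exactly the incidence sign in the simplicial coboundary.
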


    \begin{definition}\label{def:ellweight}
        Let $\ell\geq 1$ be a positive integer. The \emph{$\ell$-mixed weight} of a monomial $P^IQ^E\in\mathcal A$ with $I=(i_1,i_2,\dots)$ is defined to be
        \[
            \sum_{j<\ell} 2p^ji_j + 2p^\ell\sum_{j\geq \ell} i_j,
        \]
        which defines a grading on $\mathcal A$ and on $\mathcal A//E(n)$. It also defines an increasing filtration
        \[
            F_0^{n,\ell}\subseteq F_1^{n,\ell}\subseteq\cdots\subseteq\mathcal A//E(n)
        \]
        where $F_s^{n,\ell}$ is the sub-$\mathbb F_p$-vector space generated by monomials of $\ell$-mixed weight $\leq s$.
    \end{definition}

    \begin{lemma}\label{lem:ellweightineq}
        If $k\leq n-\ell$, then the $Q_k$-action on $\mathcal A//E(n)$ decreases the $\ell$-mixed weight by precisely $2p^{k+\ell}$. If $n-\ell<k\leq n$, then we have an inclusion
        \[
            Q_k(F_s^{n,\ell}) \subseteq F^{n,\ell}_{s-2p^{n+1}}
        \]
        for all $s$.
    \end{lemma}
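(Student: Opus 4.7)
My plan is to apply the explicit formula of Lemma \ref{lem:formula} and bookkeep how each surviving term changes the $\ell$-mixed weight. Given a basis monomial $P^I Q^{E''}$ of $\mathcal A//E(n)$ with $E''$ supported in indices $>n$, right-$\Lambda(Q_{n+1},Q_{n+2},\dots)$-linearity gives
\[
Q_k \cdot (P^I Q^{E''}) \;=\; \sum_{m\geq 1} P^{I-p^k \mathbf e_{n+m-k}}\, Q_{n+m}\, Q^{E''}.
\]
Since the $\ell$-mixed weight of Definition \ref{def:ellweight} depends only on the $P$-exponent, and since multiplying by $Q^{E''}$ on the right can only annihilate terms rather than create new ones, the analysis reduces to comparing the $\ell$-mixed weights of $I$ and $I - p^k \mathbf e_{n+m-k}$ for each $m\geq 1$.

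Setting $j := n+m-k$, the $m$-th term subtracts $p^k$ from the coordinate $i_j$ of $I$. By inspection of Definition \ref{def:ellweight}, the $\ell$-mixed weight assigns the coefficient $2p^{j}$ to $i_{j}$ when $j<\ell$ and the coefficient $2p^\ell$ when $j\geq \ell$. Therefore the weight decrease contributed by the $m$-th term is $2p^{n+m}$ if $j<\ell$ and $2p^{k+\ell}$ if $j\geq \ell$; this dichotomy drives both claims.

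For the first claim, when $k\leq n-\ell$ we have $j = n+m-k \geq \ell+1$ for every $m\geq 1$, so every surviving term contributes the uniform decrease $2p^{k+\ell}$, which shows that $Q_k$ shifts the $\ell$-mixed weight grading down by exactly $2p^{k+\ell}$. For the second claim, when $n-\ell < k\leq n$ both regimes can occur as $m$ varies; however, in the regime $j<\ell$ the decrease $2p^{n+m}$ is bounded below by $2p^{n+1}$ at $m=1$, while in the regime $j\geq \ell$ the decrease $2p^{k+\ell}$ satisfies $k+\ell \geq n+1$ because $k > n-\ell$. Thus every surviving term drops the $\ell$-mixed weight by at least $2p^{n+1}$, yielding the inclusion $Q_k(F_s^{n,\ell}) \subseteq F_{s-2p^{n+1}}^{n,\ell}$.

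This is an almost mechanical application of Lemma \ref{lem:formula}; the only subtlety is correctly bookkeeping which side of the threshold $j=\ell$ the index $n+m-k$ falls on, and this is precisely what distinguishes the two regimes in the statement, so I do not anticipate any genuine obstacle.
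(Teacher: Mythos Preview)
Your proof is correct and follows essentially the same approach as the paper: apply the formula of Lemma~\ref{lem:formula}, compute for each term how much is subtracted from the $\ell$-mixed weight depending on whether the affected index $j=n+m-k$ lies below or above the threshold $\ell$, and then observe that the two regimes in the statement correspond exactly to whether the threshold is ever crossed. Your explicit remark that the $\ell$-mixed weight ignores the $Q$-part, so that right multiplication by $Q^{E''}$ can only kill terms, is a point the paper leaves implicit in its appeal to right-$\Lambda(Q_{n+1},Q_{n+2},\dots)$-linearity.
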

    \begin{proof}
        In the formula of Lemma \ref{lem:formula}, the $t$-th term on the right hand side has $\ell$-mixed weight $2p^{k+s}$ less than that of $P^I$ where 
        \[
            s=\begin{cases}
            n+t-k&\text{if }n+t-k<\ell\\
            \ell&\text{if }n+t-k\geq \ell.
            \end{cases}
        \]
        by the definition of $\ell$-mixed weight.
        
        If $k\leq n-\ell$, then it is always the case that $n+t-k\geq\ell$, so the $\ell$-mixed weight decreases precisely by $2p^{k+\ell}$.
        
        Suppose $k>n-\ell$. Then, we have
        \[
            2p^{k+s} =\begin{cases}
            2p^{n+t}&\text{if }n+t-k<\ell\\
            2p^{k+\ell}&\text{if }n+t-k\geq \ell.
            \end{cases}
        \]
        Either case, we have $2p^{k+s}\geq2p^{n+1}$. This proves the Lemma.
    \end{proof}

    \begin{lemma}\label{lem:ellweightub}
        Suppose $x\in\mathcal A//E(n)$ is an element of odd degree and of homogeneous weight $w$. Then, $x\in F_{w-2p^{n+1}}^{n,\ell}$ for all $\ell$. In particular $w\geq 2p^{n+1}$.
    \end{lemma}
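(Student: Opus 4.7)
The plan is to reduce to a single basis monomial and then separate the weight contribution into its $P$-part and its $Q$-part. Decompose $x$ as a sum of basis monomials $P^IQ^E\in\mathcal A//E(n)$; since $x$ is homogeneous of weight $w$, each summand has weight $w$, and since $F^{n,\ell}_{w-2p^{n+1}}$ is an $\mathbb F_p$-subspace, it suffices to prove the inclusion for a single such monomial $P^IQ^E$ of odd degree and weight $w$.

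The first step is to show such a monomial must carry enough $Q$-weight. Reading the degree formula for $P^{(i_1,i_2,\dots)}Q_0^{\epsilon_0}Q_1^{\epsilon_1}\cdots$ modulo $2$, the $P^I$ contribution $-(2p^j-2)i_j$ is even while each $Q_j$ contribution $-(2p^j-1)\epsilon_j$ is odd, so an odd total degree forces an odd number of $Q_j$ factors. In $\mathcal A//E(n)$ only the $Q_j$ with $j\geq n+1$ survive, and each such factor contributes weight $2p^j\geq 2p^{n+1}$. Hence the $Q$-part of $P^IQ^E$ contributes weight at least $2p^{n+1}$, so the $P^I$ part has weight at most $w-2p^{n+1}$.

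The second step is a term-by-term comparison of the $\ell$-mixed weight of $P^I$ with its weight using Definitions \ref{def:weight} and \ref{def:ellweight}: the coefficients of $i_j$ agree for $j<\ell$, while for $j\geq\ell$ we have $2p^\ell\leq 2p^j$, so the $\ell$-mixed weight of $P^I$ is bounded above by its weight, which was just bounded by $w-2p^{n+1}$. This yields $P^IQ^E\in F^{n,\ell}_{w-2p^{n+1}}$ and hence $x\in F^{n,\ell}_{w-2p^{n+1}}$; the ``in particular'' assertion follows since a nonzero element of this subspace requires $w-2p^{n+1}\geq 0$. There is no real obstacle here: the statement is essentially bookkeeping, and the key structural input is that passing to $\mathcal A//E(n)$ kills the $Q_j$ with $j\leq n$, which is precisely what produces the $2p^{n+1}$ gap between the weight and the $\ell$-mixed weight on odd-degree elements.
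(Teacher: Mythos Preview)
Your proof is correct and follows essentially the same approach as the paper's own argument: reduce to a single monomial, use the odd-degree parity to force at least one $Q_j$ with $j\geq n+1$, deduce that the $P^I$-part carries weight at most $w-2p^{n+1}$, and then observe that the $\ell$-mixed weight of $P^IQ^E$ depends only on $I$ and is bounded above by the ordinary weight of $P^I$. The paper's proof is a bit terser but the logic is identical.
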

    \begin{proof}
        Let
        \[
        P^{(i_1,i_2,\dots)}Q_{n+1}^{\epsilon_{n+1}}Q_{n+2}^{\epsilon_{n+2}}\cdots
        \]
        be a monomial of odd degree and weight $w$. We wish to show that it has $\ell$-mixed weight of at most $w-2p^{n+1}$. Since the monomial has odd degree, we must have
        \[
            \epsilon_{n+1}+\epsilon_{n+2}+\cdots\geq1.
        \]
        Then, by the definition of weight, we have
        \[
            w = (2pi_1 + 2p^2i_2+\cdots) + (2p^{n+1}\epsilon_{n+1}+2p^{n+2}\epsilon_{n+2}+\cdots) \geq (2pi_1 + 2p^2i_2+\cdots) + 2p^{n+1}.
        \]
        Therefore, the $\ell$-mixed weight can be bounded as
        \[
            \sum_{j<\ell} 2p^ji_j + 2p^\ell\sum_{j\geq\ell}i_j \leq \sum_j 2p^j i_j \leq w-2p^{n+1}
        \]
        as desired.
    \end{proof}

    \begin{lemma}[{\cite[Section 5]{AL}}]\label{lem:maxdeg}
        Suppose that $x\in\mathcal A//E(n)$ is a homogeneous element of odd degree and weight $w$. Then, the degree of $x$ is at most
        \[
            -\frac{p-1}pw + 1-2p^n
        \]
        and is at least
        \[
            -w+1.
        \]
    \end{lemma}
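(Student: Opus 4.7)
The plan is to reduce to the case where $x$ is a single basis monomial and then compute directly. Since the weight and degree gradings both split $\mathcal A//E(n)$ into $\mathbb F_p$-vector spaces with basis the monomials $P^I Q_{n+1}^{\epsilon_{n+1}} Q_{n+2}^{\epsilon_{n+2}}\cdots$ of the common bidegree, it suffices to prove both inequalities for each such monomial.

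The key step is to subtract the degree formula from minus the weight formula, yielding the clean identity
\[
d + w = 2\sum_{j \geq 1} i_j + \sum_{j \geq n+1} \epsilon_j,
\]
where $d$ is the degree and $w$ the weight of the monomial. Writing $a = \sum_{j\geq 1} i_j \geq 0$ and $b = \sum_{j \geq n+1}\epsilon_j$, this becomes $d = -w + 2a + b$. Since $w$ is always even, the odd-degree hypothesis forces $b$ to be odd, and in particular $b \geq 1$.

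The lower bound $d \geq -w + 1$ is then immediate from $2a + b \geq 1$. For the upper bound, after clearing the $p$ in the denominator and rearranging, the inequality $d \leq -\frac{p-1}{p}w + 1 - 2p^n$ is equivalent to
\[
w \geq 2pa + pb + 2p^{n+1} - p.
\]
From the definition of weight one has the termwise lower bound $w \geq 2pa + 2p^{n+1}b$ (using $p^j \geq p$ for $j \geq 1$ in the $P^I$ summand and $p^j \geq p^{n+1}$ for $j \geq n+1$ in the $Q$ summand), so it suffices to verify $(2p^{n+1}-p)\,b \geq 2p^{n+1}-p$, which is immediate from $b \geq 1$.

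There is no real obstacle: once the identity $d = -w + 2a + b$ is in hand, both bounds are forced by the two elementary inputs $b \geq 1$ (the odd-degree constraint) and $w \geq 2pa + 2p^{n+1}b$ (a termwise comparison in the weight formula). The content of the lemma is really just a careful bookkeeping check of these linear inequalities.
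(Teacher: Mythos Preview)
Your proof is correct and follows essentially the same approach as the paper: both reduce to a single basis monomial, use the identity $d = -w + 2a + b$ with $a=\sum i_j$ and $b=\sum \epsilon_j$, invoke $b\geq 1$ from the odd-degree hypothesis for the lower bound, and combine this with the termwise estimate $w\geq 2pa + 2p^{n+1}b$ for the upper bound. The only cosmetic difference is that the paper manipulates $-d$ rather than $d$, and you note the slightly sharper fact that $b$ is odd rather than merely $\geq 1$.
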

    \begin{proof}
         Let
        \[
        P^{(i_1,i_2,\dots)}Q_{n+1}^{\epsilon_{n+1}}Q_{n+2}^{\epsilon_{n+2}}\cdots
        \]
        be a monomial of odd degree and weight $w$. We wish to minimize or maximize
        \[
            \sum_{j\geq 1} (2p^{j}-2) i_j + \sum_{j\geq n+1} (2p^{j}-1)\epsilon_j = w - \sum_{j\geq 1}2i_j - \sum_{j\geq n+1} \epsilon_j.
        \]
        Since the monomial has odd degree, we must have
        \[
            \sum_{j\geq n+1}\epsilon_j\geq1,
        \]
        so that
        \[
        w - \sum_{j\geq 1}2i_j - \sum_{j\geq n+1} \epsilon_j\leq w-1.
        \]
        Also, we have
        \[
            w = \sum_{j\geq 1}2p^j i_j + \sum_{j\geq n+1}2p^j\epsilon_{j}\geq 2p\sum_{j\geq 1}i_j + 2p^{n+1}\sum_{j\geq n+1}\epsilon_j.
        \]
        Therefore, we have
        \[
            w - \sum_{j\geq1}2i_j - \sum_{j\geq n+1}\epsilon_j \geq \frac{p-1}pw +(2p^n - 1)\sum_{j\geq n+1}\epsilon_j \geq \frac{p-1}pw + 2p^n-1
        \]
        as desired.
    \end{proof}
    
    \section{Uniqueness of \texorpdfstring{$p$}{p}-complete \textbpn}\label{sec:uniquepadic}
    In this section, we shall prove Theorem \ref{thm:intro6}. As discussed in the introduction, this implies Theorem \ref{thm:intro1}.

    Let $n\geq m\geq0$ be two nonnegative integers. Then, the Adams spectral sequence for maps from $\BP\langle n\rangle$ to $\BP\langle m\rangle$ has the form
    \[
        E_2^{s,t} = \Ext_{\mathcal A}^{s,t}(\mathcal A//E(m),\mathcal A//E(n)) \Rightarrow [\Sigma^{t-s}\BP\langle n\rangle_p^\wedge,\BP\langle m\rangle_p^\wedge].
    \]
    By change of rings, the $E_2$-page is isomorphic to
    \[
        \Ext_{\mathcal A}^{s,t}(\mathcal A//E(m),\mathcal A//E(n)) = \Ext_{E(m)}^{s,t}(\mathbb F_p,\mathcal A//E(n)).
    \]
    
    \begin{definition}\label{def:vanish}
        We say that a pair of nonnegative integers $(m,n)$ (or a triple $(p,m,n)$ if we want to make the prime $p$ explicit) with $m\leq n$ satisfies the \emph{vanishing line hypothesis} if it satisfies the conclusion of Theorem \ref{thm:intro6}, i.e. there is a positive real number $a>0$ depending only on $p,m$, and $n$ such that the group
        \[
            \Ext_{E(m)}^{s,t}(\mathbb F_p,\mathcal A//E(n))
        \]
        is zero if $t-s$ is odd and $a s + (t-s) > 1-2p^{n+1}$. Informally, this says that for odd degree classes in the Adams chart, there is a vanishing line that has negative slope and meets the $x$-axis at $(1-2p^{n+1},0)$.

        We say that $(m,n)$ satisfies the \emph{weak vanishing line hypothesis} if the same holds with $a=0$, i.e. there is a vertical vanishing line at $t-s=1-2p^{n+1}$ in the Adams chart.
    \end{definition}
    
    \begin{question}\label{q:vanish}
        Does every pair $(m,n)$ satisfy the vanishing line hypothesis?
    \end{question}
    In this section, we shall give a positive answer to this question for most cases.

    \begin{remark}\label{rmk:conv}
        The Adams spectral sequence
        \[
        E_2^{s,t} = \Ext_{\mathcal A}^{s,t}(\mathcal A//E(m),\mathcal A//E(n)) \Rightarrow [\Sigma^{t-s}\BP\langle n\rangle_p^\wedge,\BP\langle m\rangle_p^\wedge].
        \]
        conditionally converges in the sense of Boardman \cite{CCSS}. It will be clear from the computations of this section that each group $E_2^{s,t}$ is finite. Therefore, the spectral sequence converges strongly.
    \end{remark}
    
    \subsection{Ext over exterior algebra}\label{ssec:Ext}
    Let us first review the computation of Ext groups of graded modules over $E(m)$.
    \begin{definition}\label{def:bidegree}
        Suppose that $M$ is a graded $E(m)$-module. The degrees of elements of $M$ will be called the \emph{algebraic degree}. In the usual notation $\Ext^{s,t}$, the degrees $s$ and $t$ will be called the \emph{Adams degree} and the \emph{algebraic degree}. The term \emph{topological degree} will always mean $t-s$, the algebraic degree minus the Adams degree.
    \end{definition}
    \begin{definition}\label{def:resolution}
        Suppose that $m$ is a nonnegative integer. The \emph{Koszul resolution} of $\mathbb F_p$ is a differential graded $E(m)$-module $(K(E(m)),d)$ defined as a free $E(m)$-module on symbols $u^R$ where $R=(r_0,r_1,\dots,r_m)$ varies over all sequences of nonnegative integers of length $m+1$. The differential $d$ is defined by
        \[
            d(u^R) = \sum_{i=0}^m Q_i u^{R-\mathbf f_i}.
        \]
        We say that the symbol $u^R$ has algebraic degree
        \[
            -\sum_{i=0}^m r_i (2p^i-1)
        \]
        and has Adams degree
        \[
            -\sum_{i=0}^m r_i
        \]
        so that $d$ preserves the algebraic degree and increases the Adams degree by $1$.
    \end{definition}
    \begin{construction}\label{cons:Ext}
    If $M$ is an $E(m)$-module, then dualizing the above resolution, we can see that the groups $\Ext^{s,t}_{E(m)}(\mathbb F_p,M)$ can be computed as the cohomology of the complex $(M[v_0,\dots,v_n],d)$ where
    \[
        M[v_0,\dots,v_m] := \mathbb F_p[v_0,\dots,v_m]\otimes_{\mathbb F_p}M
    \]
    and $d$ is defined as
    \[
        d x = \sum_{i=0}^m v_i(Q_i x)
    \]
    for $x\in M$, extended $\mathbb F_p[v_0,\dots,v_m]$-linearly. Here, $v_0^{r_0}\cdots v_m^{r_m}$ is the dual of $u^{(r_0,\dots,r_m)}$. The bidegree of $v_i$ is $(1,2p^i-1)$, i.e. its Adams degree is $1$ and its algebraic degree is $2p^i-1$. Then, $d$ preserves the algebraic degree and increases the Adams degree by $1$. After taking the cohomology with respect to $d$, this notion of bidegree agrees with that of Definition \ref{def:bidegree}.
    \end{construction}
    \begin{example}\label{ex:KD}
        For $M=\mathbb F_p$, we have
        \[
            \Ext_{E(m)}^{\ast\ast}(\mathbb F_p,\mathbb F_p) = \mathbb F_p[v_0,\dots,v_m]
        \]
        by above. Also, by Koszul duality of exterior and polynomial algebras, the ring structure on $\Ext_{E(m)}(\mathbb F_p,\mathbb F_p)$, induced by the Yoneda product, is the polynomial ring structure. For any $E(m)$-module $M$, the group $\Ext_{E(m)}(\mathbb F_p,M)$ is a module over $\Ext_{E(m)}(\mathbb F_p,\mathbb F_p)$ by the Yoneda product, and is given by the obvious $\mathbb F_p[v_0,\dots,v_m]$-module structure on $M[v_0,\dots,v_m]$ in the description of Ext groups in Construction \ref{cons:Ext}.
    \end{example}
    \subsection{Vanishing lines}
    In this section, we shall prove the existence of vanishing lines (Question \ref{q:vanish}) for the groups
    \[
        \Ext^{s,t}_{E(m)}(\mathbb F_p,\mathcal A//E(n)).
    \]

    Recall from Construction \ref{cons:Ext} that the above Ext groups can be computed as the cohomology groups of
    \[
        \left((\mathcal A//E(n))[v_0,\dots,v_m], d=\sum_{i=0}^m v_i Q_i\right).
    \]
    We can extend the weight grading and $\ell$-mixed weight gradings of $\mathcal A//E(n)$ (Definitions \ref{def:weight} and \ref{def:ellweight}) to $(\mathcal A//E(n))[v_0,\dots,v_m]$ by declaring that each $v_i$ has weight $0$ and $\ell$-mixed weight $0$ for all $\ell$.

    \begin{lemma}
        The group $(\mathcal A//E(n))[v_0,\dots,v_m]$ is trigraded by Adams degree, algebraic degree, and weight. The differential $d$ preserves the algebraic degree and the weight, and $d$ increases the Adams degree by $1$.
    \end{lemma}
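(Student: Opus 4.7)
The statement is pure bookkeeping that unpacks definitions, so the plan is simply to verify the four assertions in turn; there is no real obstacle.

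First, for the trigrading: the Adams and algebraic bigrading on $(\mathcal A//E(n))[v_0,\dots,v_m]$ is already in place by Construction \ref{cons:Ext}. The weight grading of Definition \ref{def:weight} on $\mathcal A//E(n)$ extends to the polynomial ring by declaring each $v_i$ to have weight $0$; the polynomial factor $\mathbb F_p[v_0,\dots,v_m]$ is then concentrated in weight $0$, and the tensor product grading is well-defined and independent of the Adams and algebraic grading, giving a trigrading.

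Next, the behavior of $d=\sum_{i=0}^m v_iQ_i$ in the Adams and algebraic directions is immediate: each $v_i$ has bidegree $(1,2p^i-1)$ and weight $0$ by Construction \ref{cons:Ext}, while in the homological convention of the excerpt, left multiplication by $Q_i$ has bidegree $(0,-(2p^i-1))$. Hence each summand $v_iQ_i$ raises Adams degree by $1$ and preserves algebraic degree.

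The one piece that is not purely a matter of definitions is weight invariance of $d$. This reduces to showing that $Q_i$ preserves the weight grading on $\mathcal A//E(n)$ for $0\leq i\leq m$; since $m\leq n$, this falls in the range $0\leq i\leq n$ for which Lemma \ref{lem:formula} asserts precisely this fact. Combined with $v_i$ having weight $0$, we conclude that $d$ preserves weight. The main (and only) nontrivial ingredient is thus the weight invariance already established in Lemma \ref{lem:formula}.
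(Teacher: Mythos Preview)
Your proof is correct and follows essentially the same approach as the paper, which also treats the lemma as bookkeeping: the paper's proof is even terser, noting only that the three gradings are compatible because each graded piece is generated by monomials, and implicitly relying on Construction \ref{cons:Ext} and Lemma \ref{lem:formula} for the behavior of $d$. You and the paper simply differ in which step you flag as the nontrivial one (the paper highlights grading compatibility, you highlight weight invariance of $d$), but the content is the same.
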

    \begin{proof}
        The only nontrivial assertion is that the three gradings are compatible with each other. This follows from the fact that each graded piece of each grading is generated by monomials.
    \end{proof}

    \begin{convention}
        From this point, all elements of $(\mathcal A//E(n))[v_0,\dots,v_m]$ that we consider are assumed to be homogeneous in the tridegree, i.e. Adams degree, algebraic degree, and weight.
    \end{convention}
    \begin{construction}\label{cons:normal}
    Let $0\neq\chi\in\Ext^{\ast\ast}_{E(m)}(\mathbb F_p,\mathcal A//E(n))$ be a class of odd topological degree and weight $w$. Suppose that $x\in (\mathcal A//E(n))[v_0,\dots,v_m]$ represents $\chi$. Then, let $x_0=x$, and we inductively define a sequence $x_1,\dots,x_{m+1}\in(\mathcal A//E(n))[v_0,\dots,v_m]$ and a sequence of integers $r_0,\dots,r_m$ by
    \[
        x_i = v_i^{r_i}x_{i+1} \pmod{v_i^{r_i+1}}, \qquad 0\neq x_{i+1}\in(\mathcal A//E(n))[v_{i+1},\dots,v_m].
    \]
    Note that since the $v_i$'s have even topological degrees, each $x_i$ has odd topological degree.
    
    Let us say that $x$ is a \emph{normal representation} of $\chi$ if $(r_0,\dots,r_m)$ is lexicographically minimal among all representatives of $\chi$. We shall say that $(x_0,\dots,x_{m+1})$ and $(r_0,\dots,r_m)$ are the \emph{associated sequences}.
    \end{construction}

    \begin{proposition}\label{prop:mainineq}
        In the situation of Construction \ref{cons:normal} where $x$ is a normal representation, we have $r_m=0$ and
        \[
            r_i \leq \frac{w-2p^{n+1}}{2p^n(p-1)}
        \]
        for all $0\leq i<m$.
    \end{proposition}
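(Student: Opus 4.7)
The plan is to extract constraints on the coefficients of $x$ from the cocycle condition $dx=0$, apply Proposition~\ref{prop:exact} to the resulting $Q_k$-kernels in odd degree, and track the $\ell$-mixed weight of the preimages using Lemmas~\ref{lem:ellweightineq} and~\ref{lem:ellweightub}.

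First, I would unpack the cocycle equation at the monomial $v^{\alpha+\mathbf f_j}$ for each $j\in\{0,\dots,m\}$, where $\alpha=(r_0,\dots,r_m)$ is the lex-minimum monomial of $x$. Writing $x=\sum_\beta c_\beta v^\beta$, normality forces $c_\beta=0$ for every $\beta<\alpha$ lex, so the equation $(dx)_{\alpha+\mathbf f_j}=\sum_i Q_i c_{\alpha+\mathbf f_j-\mathbf f_i}=0$ simplifies: terms with $i<j$ involve monomials $\alpha+\mathbf f_j-\mathbf f_i<\alpha$ lex and vanish, leaving
\[
Q_jx_{m+1}+\sum_{i>j}Q_i\,c_{\alpha+\mathbf f_j-\mathbf f_i}=0.
\]
The case $j=m$ gives $Q_mx_{m+1}=0$, and Proposition~\ref{prop:exact} (applicable because $x_{m+1}$ is odd degree) yields $y\in\mathcal A//E(n)$ with $x_{m+1}=Q_my$.

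For $r_m=0$, I would argue by contradiction: if $r_m\geq 1$, the aim is to construct a coboundary $dw$ (using $y$ together with factorizations iteratively extracted from the $j<m$ cocycle equations) so that $x-dw$ has a lex-strictly-smaller associated sequence, contradicting normality. The naive attempt $w=v^{\alpha-\mathbf f_m}y$ produces new terms $v^{\alpha-\mathbf f_m+\mathbf f_i}Q_iy$ for $i<m$, all lex-larger than $\alpha$, so the construction must be refined: one inductively modifies $y$ modulo $Q_m$-kernels using the cocycle equations at $j=m-1,m-2,\dots$ together with iterated applications of Proposition~\ref{prop:exact}, eventually producing a $w$ whose differential has a nonvanishing term at a monomial strictly lex-below $\alpha$.

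For the bound $r_i\leq (w-2p^{n+1})/(2p^n(p-1))$ with $i<m$, I would set $\ell=n-i$. By Lemma~\ref{lem:ellweightub}, $x_{m+1}\in F^{n,\ell}_{w-2p^{n+1}}$, and by Lemma~\ref{lem:ellweightineq} each $Q_k$ for $k\leq i=n-\ell$ decreases the $\ell$-mixed weight by exactly $2p^{k+\ell}$, while $Q_k$ for $k>i$ decreases it by at least $2p^{n+1}$. Iterating the factorization procedure once for each of the $r_i$ powers of $v_i$ in $v^\alpha$, and tracking the $\ell$-mixed weight increments on successive preimages, the cumulative deficit is at least $r_i\cdot 2p^n(p-1)$; the $(p-1)$ factor emerges from telescoping the differences such as $2p^{n+1}-2p^n=2p^n(p-1)$ that arise when comparing $Q_i$-steps to lower $Q_k$-steps. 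Combined with the $\ell$-mixed weight bound on $x_{m+1}$, this forces $r_i\cdot 2p^n(p-1)\leq w-2p^{n+1}$.

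The main obstacle is the explicit construction in the $r_m=0$ step: the naive coboundary increases rather than decreases the lex sequence, so one must thread all the cocycle equations together in a careful inductive procedure that eliminates the unwanted lex-larger terms via auxiliary factorizations. The precise combinatorial bookkeeping producing the $(p-1)$ factor in the $r_i$ bound is also delicate and must be organized so that the worst-case telescoping is sharp.
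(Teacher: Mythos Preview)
Your proposal has a genuine gap, traceable to an inconsistency in the paper itself: Construction~\ref{cons:normal} says ``lexicographically minimal'', but the paper's own proof (the first lemma following Proposition~\ref{prop:mainineq}) explicitly invokes ``lexicographical \emph{maximality}'' of $(r_0,\dots,r_m)$, and that is the interpretation under which everything works. Under the lex-minimal reading you adopted, your target in the $r_m=0$ step --- a coboundary $dw$ with a nonvanishing term strictly lex-below $\alpha$ --- points the wrong way; you acknowledge the naive attempt fails and defer to an unspecified refinement that you would not be able to carry out. Under the correct lex-maximal reading, your \emph{naive} attempt already succeeds: $d(v^{\alpha-\mathbf f_m}y)$ cancels the $v^\alpha$-coefficient of $x$ and introduces only lex-larger monomials, so $x-dw$ is a representative with strictly lex-larger associated sequence, contradicting maximality.

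A second gap lies in the $r_i$ bound. You correctly identify $\ell=n-i$ and the relevant lemmas, but ``iterating the factorization procedure once for each of the $r_i$ powers of $v_i$'' is not a procedure you have actually defined, and you never explain why the output of the iteration is nonzero --- which is exactly where the inequality comes from. The paper separates this cleanly into two lemmas. First it shows (using lex-maximality) that each $x_i\in(\mathcal A//E(n))[v_i,\dots,v_m]$ is a $d^i$-cycle but \emph{not} a $d^i$-boundary. Second, working entirely in that quotient complex, it reduces $x_i$ modulo $d^i$-boundaries to a single term $v_i^{r_i}y_{r_i}$ and then iterates: $y_j=Q_iz_j$ by Proposition~\ref{prop:exact}, and $y_{j-1}:=-d^{i+1}z_j$, so that $v_i^{j}y_j$ is $d^i$-homologous to $v_i^{j-1}y_{j-1}$. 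Each step lowers the $(n-i)$-mixed-weight bound by exactly $2p^{n+1}-2p^n=2p^n(p-1)$. The non-boundary fact from the first lemma forces $y_0\neq 0$, and nonnegativity of the mixed weight then yields the inequality. Your sketch has the right ingredients but omits both the passage to the quotient complex and the nonvanishing argument, without which the $\ell$-mixed-weight bookkeeping has nothing to anchor.
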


    The proof of the proposition is a combination of the following lemmas. Let us write
    \[
        d^i = v_iQ_i + \cdots+ v_mQ_m
    \]
    for the differential for the quotient complex $(\mathcal A//E(n))[v_i,\dots,v_m]$. Note that $d=d^0$.
    \begin{lemma}
        In the situation of Construction \ref{cons:normal} where $x$ is a normal representation with associated sequences $(x_0,\dots,x_{m+1})$ and $(r_0,\dots,r_m)$, $x_i$ is a $d^i$-cycle but not a $d^i$-boundary for all $0\leq i\leq m$.
    \end{lemma}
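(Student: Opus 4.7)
The plan is to induct on $i$ from $0$ up to $m$, proving at each stage both that $x_i$ is a $d^i$-cycle and that $x_i$ is not a $d^i$-boundary. The base case $i = 0$ is immediate: $x_0 = x$ represents the Ext class $\chi$, so it is tautologically a $d^0$-cycle, and it cannot be a $d^0$-boundary since $\chi \neq 0$. For the inductive step, I would first write $x_i = v_i^{r_i} x_{i+1} + v_i^{r_i+1} z$ with $z \in (\mathcal A//E(n))[v_i, \dots, v_m]$ and decompose $d^i = v_i Q_i + d^{i+1}$. Because $v_i Q_i$ strictly raises the $v_i$-degree while $d^{i+1}$, being a differential on $(\mathcal A//E(n))[v_{i+1}, \dots, v_m]$, preserves $v_i$-degree, extracting the $v_i^{r_i}$-coefficient of the identity $d^i x_i = 0$ will yield $d^{i+1} x_{i+1} = 0$.

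For the non-boundary half, I would argue by contradiction: suppose $x_{i+1} = d^{i+1} w$ for some $w \in (\mathcal A//E(n))[v_{i+1}, \dots, v_m]$, and set $y := v_0^{r_0} v_1^{r_1} \cdots v_i^{r_i}\, w$. Using that each $Q_j$ commutes with multiplication by the $v_j$'s (the $v_j$'s being mere polynomial generators untouched by the $\mathcal A$-action), I would compute
\[
    d^0 y \;=\; v_0^{r_0} \cdots v_i^{r_i} \cdot d^0 w \;=\; v_0^{r_0} \cdots v_i^{r_i}\Bigl(\sum_{k=0}^i v_k Q_k w + x_{i+1}\Bigr).
\]
Since the $v_0^{r_0} \cdots v_i^{r_i}$-coefficient of $x_0$ equals $x_{i+1}$ by iterating the definitions of $x_1, \dots, x_{i+1}$, the modified representative $x_0' := x_0 - d^0 y$ of $\chi$ will have its $v_0^{r_0} \cdots v_i^{r_i}$-coefficient killed. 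Unwinding the definition of the associated sequence then shows that the new sequence $(r_0', \dots, r_m')$ satisfies $r_j' = r_j$ for $j < i$ and $r_i' > r_i$, which contradicts the normality of $x$ and closes the induction.

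The main obstacle I anticipate is the bookkeeping in the last step: I must track how the subtraction of $d^0 y$ affects each $v_0^{j_0} \cdots v_i^{j_i}$-coefficient of $x_0$ in order to justify the conclusion $r_j' = r_j$ for $j < i$ and $r_i' > r_i$, and in particular to rule out any spurious lower-order terms that would move the sequence in the wrong direction. This should reduce to two facts: the $\mathbb F_p[v_0, \dots, v_m]$-linearity of $d^0$, and the explicit expansion $d^0 w = \sum_{k=0}^i v_k Q_k w + d^{i+1} w$, which will together pin down exactly which $v$-monomials of $d^0 y$ can interact with the relevant monomials of $x_0$.
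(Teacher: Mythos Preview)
Your proposal is correct and follows the same route as the paper's proof, up to an index shift (the paper runs the induction from $i-1$ to $i$ and writes the modified representative as $x' = x - v_0^{r_0}\cdots v_{i-1}^{r_{i-1}}\, dy$, which is your $x_0 - d^0(v_0^{r_0}\cdots v_i^{r_i}\,w)$ by the $\mathbb F_p[v_0,\dots,v_m]$-linearity of $d$). One small correction to your bookkeeping claim: the argument only yields that either $x_0' = 0$ or $(r_0',\dots,r_m')$ is lexicographically strictly greater than $(r_0,\dots,r_m)$, not the stronger assertion that $r_j' = r_j$ for all $j<i$ with $r_i' > r_i$ (the subtracted terms $v_0^{r_0}\cdots v_k^{r_k+1}\cdots v_i^{r_i}\,Q_k w$ for $k<i$ could in principle cancel everything at the prefix $(r_0,\dots,r_{i-1})$); but either outcome already contradicts $\chi\neq 0$ or normality, so the induction closes exactly as you intend.
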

    \begin{proof}
        Let us induct on $i$. For $i=0$, this follows from the assumptions.
        
        Suppose $i>0$. Since
        \[
            x_{i-1} = v_{i-1}^{r_{i-1}}x_i \pmod{v_{i-1}^{r_{i-1}+1}}
        \]
        is a cycle with respect to $d^{i-1} = v_{i-1}Q_{i-1}+d^i$, it follows that $x_i$ is a $d^i$-cycle.
        
        Suppose that $x_i = d^i y$ for some $y$. Consider the cycle
        \[
            x' := x - v_0^{r_0}\cdots v_{i-1}^{r_{i-1}}dy = x - v_0^{r_0}\cdots v_{i-1}^{r_{i-1}}(v_0Q_0y + \cdots+v_{i-1}Q_{i-1}y + d^iy)
        \]
        and let $(x_0',x_1',\dots,x_m',x_{m+1}')$ and $(r_0',\dots,r_m')$ be the sequences defined as before for $x'$ instead of $x$.
        
        From the definition of $x'$, it must be a multiple of $v_0^{r_0}$ since $x_0$ is. However, by the lexicographical maximality of $(r_0,\dots,r_m)$, the cycle $x'$ cannot be a multiple of $v_0^{r_0+1}$. This implies that $r_0'=r_0$, and looking at the definition of $x'$ modulo $v_0^{r_0+1}$, we have
        \[
            x_1' = x_1 - v_1^{r_1}\cdots v_{i-1}^{r_{i-1}}(v_1Q_1+\cdots+v_{i-1}Q_{i-1}y + d^iy).
        \]
        Repeating the same argument $i$ times, we eventually obtain
        \[
            x_i' = x_i - d^iy = 0
        \]
        which is a contradiction.
    \end{proof}

    \begin{lemma}
        Suppose that $y\in(\mathcal A//E(n))[v_i,\dots,v_m]$ is an element of odd degree and weight $w$. Also, suppose that it is a $d^i$-cycle but not a $d^i$-boundary. If $y$ is divisible by $v_i^r$, then we have
        \[
            r\leq\frac{w-2p^{n+1}}{2p^n(p-1)}
        \]
        if $i<m$ and $r=0$ if $i=m$.
    \end{lemma}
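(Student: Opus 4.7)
The case $i = m$ I handle directly via Proposition~\ref{prop:exact}. Because $y \in (\mathcal{A}//E(n))[v_m]$ is tridegree-homogeneous and $v_m$ has even topological degree, $y$ must take the form $y = v_m^r a$ for a single element $a \in \mathcal{A}//E(n)$ of odd algebraic degree. The cycle condition $d^m y = v_m^{r+1} Q_m a = 0$ forces $Q_m a = 0$, and Proposition~\ref{prop:exact} produces $b \in \mathcal{A}//E(n)$ with $a = Q_m b$. If $r \geq 1$, then $y = d^m(v_m^{r-1} b)$, contradicting the non-boundary hypothesis; hence $r = 0$.

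For the case $i < m$, the plan is to induct on $m - i$ with the base case just treated. After replacing $y$ by a cohomologous cycle of maximum $v_i$-divisibility in its cohomology class, I decompose $y = v_i^r z$ with $z = \alpha + v_i z'$, where $\alpha \in (\mathcal{A}//E(n))[v_{i+1}, \ldots, v_m]$ is nonzero by maximality. Extracting the coefficient of $v_i^r$ from the relation $d^i y = 0$ shows that $\alpha$ is a $d^{i+1}$-cycle. The maximality of $r$ also forces $\alpha$ to represent a nontrivial class in $H^*(C_{i+1}, d^{i+1})$, because otherwise writing $\alpha = d^{i+1}\beta$ and subtracting $d^i(v_i^r \beta) = v_i^r \alpha + v_i^{r+1} Q_i \beta$ from $y$ would produce a cohomologous cycle divisible by $v_i^{r+1}$.

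Next I apply Proposition~\ref{prop:mainineq} at level $i+1$ --- itself a consequence of the inductive hypothesis, i.e.\ of the Lemma at levels $i+1, \ldots, m$ --- to the nontrivial class $[\alpha] \in H^*(C_{i+1})$ of weight $w$ and odd topological degree. This produces a normal representation of $\alpha$ with associated sequences $(r_{i+1}, \ldots, r_m)$ satisfying $r_m = 0$ and $r_j \leq B := (w - 2p^{n+1})/(2p^n(p-1))$ for $i+1 \leq j < m$, and whose terminal element $x_{m+1} \in \mathcal{A}//E(n)$ is nonzero of weight $w$ and odd algebraic degree. By Lemma~\ref{lem:maxdeg}, $|x_{m+1}| \leq -\tfrac{p-1}{p} w + 1 - 2p^n$.

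Tridegree matching along the normal representation yields
\[
t = r(2p^i - 1) + \sum_{j = i+1}^{m-1} r_j(2p^j - 1) + |x_{m+1}|,
\]
where $t$ is the algebraic degree of $y$. Combining this identity with Lemma~\ref{lem:maxdeg}'s bound on $|x_{m+1}|$, with the sequence bounds $r_j \leq B$, and with further applications of Lemma~\ref{lem:maxdeg} to the other nonzero coefficients of $y$ --- whose algebraic degrees must also lie in $[-w+1, -\tfrac{p-1}{p} w + 1 - 2p^n]$ --- should produce the target bound $r \leq B$. The main obstacle will be this final combinatorial step: the inductive hypothesis bounds $r_{i+1}, \ldots, r_{m-1}$ directly, but the bound on $r = r_i$ itself must be extracted by balancing the multiple tridegree constraints, and in particular by using the sharpness of Lemma~\ref{lem:maxdeg}'s lower bound $-w + 1$ applied to a suitably chosen extremal coefficient of $y$ whose corresponding multi-index places the $v_i$-power at exactly $r$ and distributes the remaining $s - r$ Adams degrees so as to maximize $\sum j_k(2p^k - 1)$.
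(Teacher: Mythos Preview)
Your $i=m$ case is correct and matches the paper. For $i<m$, however, the final ``combinatorial step'' you defer is not a detail to be filled in---it is the entire content of the lemma, and the tools you list cannot supply it. The tridegree constraints you assemble involve only Adams degree, algebraic degree, and weight, and the $v_j$'s all have weight~$0$. In particular, for $i=0$ the variable $v_0$ has topological degree $2p^0-2=0$, so the exponent $r$ drops out of your degree identity entirely; no bound on $r$ is obtainable from tridegree matching alone. Even for $i>0$, comparing the upper bound from Lemma~\ref{lem:maxdeg} on your leading term against the lower bound on any other term of $y$ yields only the tautology $w\geq 2p^{n+1}$ and no constraint on $r$. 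There is also no reason for $y$ to contain an ``extremal'' monomial of the shape $v_i^r v_m^{s-r}(\text{elt})$ that you hope to exploit.

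The paper does not induct on $m-i$ and does not use Proposition~\ref{prop:mainineq} here (indeed that proposition is deduced \emph{from} this lemma). Instead it introduces a new filtration, the $\ell$-mixed weight $F^{n,\ell}_\bullet$ of Definition~\ref{def:ellweight}, and runs a direct zig-zag. First one reduces $y$ modulo $d^i$-boundaries to the pure form $v_i^r y_r$ with $y_r\in(\mathcal A//E(n))[v_{i+1},\dots,v_m]$. Then, using Proposition~\ref{prop:exact} to solve $y_j=Q_i z_j$ and setting $y_{j-1}=-d^{i+1}z_j$, one produces a chain $y_r,y_{r-1},\dots,y_0$ with $v_i^j y_j$ all homologous to $y$. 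The key point is Lemma~\ref{lem:ellweightineq} with $\ell=n-i$: lifting through $Q_i$ raises the $(n-i)$-mixed weight by $2p^n$, while applying $d^{i+1}$ lowers it by at least $2p^{n+1}$, so each step $y_j\rightsquigarrow y_{j-1}$ drops the $(n-i)$-mixed weight by at least $2p^n(p-1)$. Starting from $y_r\in F^{n,n-i}_{w-2p^{n+1}}$ (Lemma~\ref{lem:ellweightub}) and requiring $y_0\neq 0$ forces $w-2p^{n+1}-r\cdot 2p^n(p-1)\geq 0$, which is exactly the claimed bound. This mixed-weight bookkeeping is the missing idea in your attempt.
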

    \begin{proof}
        If $i=m$, then $y$ is of the form $v_m^{r_m}y'$ for $y'\in\mathcal A//E(n)$ by homogeneity. Since $y$ is a cycle with respect to $d^m=v_mQ_m$, we have $Q_my'=0$. By Proposition \ref{prop:exact}, there is some $z$ such that $y'=Q_mz$. If $r>0$, then $r_m>0$ and we have $v_m^{r_m}y'=d^m(v_m^{r_m-1}z)$, which is a contradiction. Therefore, we have $r=0$.
    
        Now suppose that $i<m$. Let us write
        \[
            y = v_i^r y_r + v_i^{r+1}y_{r+1}+\cdots+ v_i^ky_k
        \]
        for some $r\leq k$ and $y_r,\dots,y_k\in (\mathcal A//E(n))[v_{i+1},\dots,v_m]$. Since $y$ is a cycle with respect to $d^i = v_iQ_i + d^{i+1}$, we can see that $Q_iy_k=0$. By Proposition \ref{prop:exact}, we have $y_k = Q_i z_k$ for some $z_k$.

        If $k>r$, we can replace $y$ with
        \[
        y-v_i^{k-1}d^iz_k = v_i^ry_r + \cdots+ v_i^{k-1}(y_{k-1}-d^{i+1}z_k)
        \]
        so that we have lowered $k$ by $1$ while $y$ is still a multiple of $v_i^r$. Repeating the argument, we may assume that
        \[
            y = v_i^r y_r
        \]
        for some $y_r\in(\mathcal A//E(n))[v_{i+1},\dots,v_m]$.

        Now, we shall inductively define $y_{r-1},\dots,y_0\in(\mathcal A//E(n))[v_{i+1},\dots,v_m]$ so that $y$ is $d^i$-homologous to $v_i^{j}y_j$ for all $0\leq j\leq r$. Inductively, if $y$ is $d^i$-homologous to $v_i^jy_j$, then since $v_i^jy_j$ is a cycle, we have $Q_i y_j=0$, so that $y_j = Q_i z_j$ for some $z_j$ by Proposition \ref{prop:exact}. Then, $y$ is $d^i$-homologous to
        \[
            v_i^j y_j - v_i^{j-1}d^iz_j = -v_i^{j-1}(d^{i+1}z_j)
        \]
        and we can define $y_{j-1} := -d^{i+1}z_j$.

        In the previous paragraph, if we have
        \[
            y_j \in F^{n,n-i}_s[v_{i+1},\dots,v_m] := F^{n,n-i}_s\otimes_{\mathbb F_p}\mathbb F_p[v_{i+1},\dots,v_m]
        \]
        (Definition \ref{def:ellweight}), then by Lemma \ref{lem:ellweightineq} applied to $(k,\ell)=(i,n-i)$, we have
        \[
            z_j \in F^{n,n-i}_{s+2p^n}[v_{i+1},\dots,v_m],
        \]
        and by the same Lemma, we have
        \[
            y_{j-1} \in F^{n,n-i}_{s+2p^n-2p^{n+1}}[v_{i+1},\dots,v_m].
        \]
        Therefore, since
        \[
            y_r\in F^{n,n-i}_{w-2p^{n+1}}[v_{i+1},\dots,v_m]
        \]
        by Lemma \ref{lem:ellweightub}, we must have
        \[
            y_0 \in F^{n,n-i}_{w-2p^{n+1} + r(2p^n-2p^{n+1})}[v_{i+1},\dots,v_m].
        \]
        Since $y$ is not a $d^i$-boundary, it follows that $y_0$ is nonzero. However, since the $\ell$-mixed weight is always nonnegative, we have
        \[
w-2p^{n+1} + r(2p^n-2p^{n+1})\geq0
        \]  
        which implies the desired statement.
    \end{proof}
    \begin{proof}[Proof of Proposition \ref{prop:mainineq}]
        It follows immediately from the previous two lemmas.
    \end{proof}

    With Proposition \ref{prop:mainineq}, we can answer Question \ref{q:vanish}.
    \begin{theorem}[Theorem \ref{thm:intro6}]\label{thm:vanish}
        Let $m\leq n$ be nonnegative integers. Suppose that at least one of the following conditions is true:
        \begin{enumerate}
        \item $p$ is odd,
        \item $m\leq n-1$,
        \item $n\leq 2$.
        \end{enumerate}
        Then, $(m,n)$ satisfies the vanishing hypothesis (Definition \ref{def:vanish}). Also, the triple $(p,m,n)=(2,3,3)$ satisfies the weak vanishing hypothesis.
    \end{theorem}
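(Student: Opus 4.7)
The plan is to extract a linear upper bound on $t-s$ in terms of the Adams degree $s$ from Proposition \ref{prop:mainineq} and Lemma \ref{lem:maxdeg}, and then verify a single numerical condition case by case. Fix a nonzero $\chi \in \Ext_{E(m)}^{s,t}(\mathbb F_p, \mathcal A//E(n))$ with $t-s$ odd and weight $w$, and choose a normal representation with associated sequences $(x_0, \ldots, x_{m+1})$ and $(r_0, \ldots, r_m)$. By Proposition \ref{prop:mainineq}, $r_m = 0$ and each $r_i \leq R := (w - 2p^{n+1})/(2p^n(p-1))$ for $i<m$, while $x_{m+1} \in \mathcal A//E(n)$ is an odd-degree element of weight $w$. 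Unwinding Construction \ref{cons:normal} gives
\[
    s = \sum_{i=0}^{m-1} r_i \leq mR, \qquad t - s = \sum_{i=0}^{m-1}(2p^i - 2)\,r_i + \deg(x_{m+1}).
\]

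I then assemble three elementary estimates. From $s \leq mR$ one has $w \geq w_{\min} := 2p^{n+1} + 2p^n(p-1) s/m$ whenever $s > 0$. From $r_i \leq R$ and the identity $\sum_{i<m}(2p^i - 2) = 2C$, where $C := \frac{p^m-1}{p-1} - m$, one has $\sum_{i<m}(2p^i - 2)\, r_i \leq 2RC$. And Lemma \ref{lem:maxdeg} yields $\deg(x_{m+1}) \leq -\frac{p-1}{p}\, w + 1 - 2p^n$. Chaining these,
\[
    t - s \leq \frac{(w - 2p^{n+1})\, C}{p^n(p-1)} - \frac{(p-1)\, w}{p} + 1 - 2p^n,
\]
which is affine in $w$ with coefficient $\frac{C - (p-1)^2 p^{n-1}}{p^n(p-1)}$.

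The argument hinges on the numerical condition
\[
    C < (p-1)^2\, p^{n-1}. \qquad (\ast)
\]
Under $(\ast)$, the coefficient of $w$ is strictly negative, so the right-hand side of the displayed bound is maximized at $w = w_{\min}$; direct substitution collapses it to $t - s \leq 1 - 2p^{n+1} - \frac{2s}{m}[(p-1)^2 p^{n-1} - C]$, exhibiting the vanishing line hypothesis with $\epsilon = \frac{2}{m}[(p-1)^2 p^{n-1} - C] > 0$. The edge case $s = 0$ (subsuming $m = 0$, since Proposition \ref{prop:mainineq} forces every $r_i = 0$ there) is treated separately: $\chi$ then reduces to an odd-degree element of $\mathcal A//E(n)$ of weight $w \geq 2p^{n+1}$, and Lemma \ref{lem:maxdeg} immediately gives $t - s \leq 1 - 2p^{n+1}$ with no slope required.

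The remaining work is a case check of $(\ast)$. For $m = 0$ it is trivial. For $m \geq 1$, the estimate $C \leq (p^m - p)/(p-1)$ yields $C / ((p-1)^2 p^{n-1}) \leq p^{m-n+1}/(p-1)^3$, which is at most $p/(p-1)^3 \leq 3/8$ whenever $p$ is odd and $n \geq m$, handling case (1); when $n \geq m+1$ at any prime, $(p-1)^2 p^{n-1} \geq p^m > C$ handles case (2); the finitely many $p = 2$, $n \leq 2$ cases of (3) are verified by direct inspection. Finally, for the boundary triple $(p,m,n) = (2,3,3)$ one computes $C = 4 = (p-1)^2 p^{n-1}$, so $\epsilon = 0$ and the inequality collapses to $t - s \leq 1 - 2p^{n+1}$, giving precisely the weak vanishing hypothesis. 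The main subtlety is that the crude-looking bound $T \leq 2RC$ is in fact saturated at the uniform configuration $r_i = s/m$ (corresponding to $w = w_{\min}$), which is what makes $(\ast)$ the sharp condition rather than the weaker one produced by naively bounding $T \leq (2p^{m-1} - 2)s$.
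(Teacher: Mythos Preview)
Your proof is correct and follows essentially the same approach as the paper: both combine Proposition~\ref{prop:mainineq} with Lemma~\ref{lem:maxdeg} to extract a linear inequality between $t-s$ and $s$, then verify the sign of the slope case by case. The organizational difference is that the paper bounds $\sum_j (2p^{j+1}-2p^j)r_j$ and compares coefficients term by term (treating the $p=2$ cases $n=m=2,3$ with separate ad hoc manipulations), whereas you uniformly bound $\sum_j (2p^j-2)r_j \leq 2RC$ and optimize the resulting affine function of $w$ over $w\geq w_{\min}(s)$, reducing everything to the single clean inequality $C<(p-1)^2p^{n-1}$. Your packaging is tidier and handles all cases, including the borderline $(2,3,3)$, by a uniform mechanism; the cost is a slightly smaller explicit $\epsilon$ in some cases (e.g.\ your $\epsilon=1$ versus the paper's $\epsilon=8/5$ at $(2,2,2)$), but that is irrelevant to the statement.
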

    \begin{proof}
        Let $0\neq \chi\in\Ext_{E(m)}^{\ast\ast}(\mathbb F_p,\mathcal A//E(n))$ be a nonzero class of odd topological degree and weight $w$, and let $x$ be a normal representation (Construction \ref{cons:normal}) with associated sequences $(x_0,\dots,x_{m+1})$ and $(r_0,\dots,r_m)$. Note that we have $r_m=0$ by Proposition \ref{prop:mainineq}. Thus, if $m=0$, the statement vacuously follows since there is no odd Ext class in positive Adams degree and all odd degree elements of $\mathcal A//E(n)$ have degrees at most $1-2p^{n+1}$.
        
        Now suppose that $m>0$. Then, since
        \[
            x=v_0^{r_0}\cdots v_{m-1}^{r_{m-1}}x_{m+1}+\cdots
        \]
        with $x_{m+1}\in\mathcal A//E(n)$, we have
        \[
            \deg(x) \leq \left(\sum_{j=0}^{m-1} (2p^j-2)r_j\right)-\frac{p-1}pw+1-2p^n
        \]
        by Lemma \ref{lem:maxdeg}, where $\deg$ denotes the topological degree. Rearranging, we have
        \[
            w - 2p^{n+1} \leq \frac{p}{p-1}\left(1-2p^{n+1}-\deg(x) +\sum_{j=0}^{m-1} (2p^j-2)r_j\right).
        \]

        By Proposition \ref{prop:mainineq}, we have
        \begin{align*}
            &\sum_{j=0}^{m-1}(2p^{j+1}-2p^j)r_j \\
            &\leq \left(p^{-n}+\cdots+p^{-n+m-1}\right)(w-2p^{n+1})\\
            &\leq \frac{p^{-n+m}}{p-1} (w-2p^{n+1})\\
            &\leq \frac{p^{-n+m+1}}{(p-1)^2}\left(1-2p^{n+1}-\deg(x) +\sum_{j=0}^{m-1} (2p^j-2)r_j\right).
        \end{align*}
        Suppose that $p$ is odd. Then, we have
        \[
            2p^{j+1}-2p^j > \frac{p(2p^j-2)}{(p-1)^2} \geq \frac{p^{-n+m+1}(2p^j-2)}{(p-1)^2}
        \]
        so that we have $a(r_0+\cdots+r_{m-1})\leq 1-2p^{n+1}-\deg(x)$
        for some $a>0$, which is the desired statement. The same proof works for the case $p=2$ and $m=1$.
        
        Similarly, if $m\leq n-1$, then we have
        \[
            2p^{j+1}-2p^j > \frac{2p^j-2}{(p-1)^2} \geq \frac{p^{-n+m+1}(2p^j-2)}{(p-1)^2}
        \] 
        and the same conclusion follows.

        Finally, let us handle the case where $p=2$ and $2\leq n=m\leq 3$. If $n=m=2$, then by Proposition \ref{prop:mainineq}, we have
        \[
            8r_0,8r_1 \leq w-2p^{n+1} \leq 2(1-2p^{n+1}-\deg(x) + 2r_1),
        \]
        from which the theorem easily follows. For $n=m=3$, we have
        \[
            16r_0,16r_1,16r_2\leq 2(1-2p^{n+1}-\deg(x) + 2r_1 + 6r_2).
        \]
        Then, we have
        \[
            16r_1 + 48r_2 \leq 8(1-2p^{n+1}-\deg(x) +2r_1+6r_2)
        \]
        so that
        \[
            \deg(x)\leq 1-2p^{n+1}
        \]
        as desired.
    \end{proof}

    \begin{remark}
        We are not suggesting that Question \ref{q:vanish} has a negative answer when $p=2$. It is simply that the inequalities we have here are not sufficient for those cases. The author expects that Theorem \ref{thm:vanish} is true for all $p$ and $m=n$.
    \end{remark}

    \begin{remark}\label{rmk:AL}
        Let us briefly discuss the proof of \cite{AL} of Theorem \ref{thm:vanish}. For simplicity, let us consider the case of $\BP\langle 2\rangle$ at $p=2$.

        Given an odd degree cycle, let us consider the normal representation (Construction \ref{cons:normal})
        \[
            x=v_0^{r_0}v_1^{r_1}x_{3} + \cdots.
        \]
        In other words, the rest of the terms are multiples of $v_0^i v_1^j v_2^k$ where $(i,j,k)$ is lexicographically larger than $(r_0,r_1,0)$.
        Then, the argument of \cite{AL} claims that there is another representative, modulo cycles and lexicographically large terms (larger than $(r_0,r_1,0)$), which is a sum of terms of the form
        \[
            v_1^{r_0-i}v_2^{r_1+i} y
        \]
        for $i\geq0$. However, computer-assisted calculation shows that the cycle $v_0^3 v_1^2  P^{(8,0,0,4)}Q_3$ does not satisfy this, and we can check that it meets a dead-end while going through the zig-zag algorithm of \cite[Section 5]{AL}.
        %
        %

    \end{remark}

    \section{Recovering \textp-local homotopy types from \textp-complete homotopy types}\label{sec:AParg}
    In this section, we study how to lift an equivalence of $p$-complete spectra to an equivalence of $p$-local spectra.
    \subsection{Lifting \textp-complete maps to \textp-local maps}
    \begin{definition}
        Let $X,Y$ be $p$-local spectra of finite type. We say that the pair $(X,Y)$ is \emph{preAP} if the natural map
        \[
            [X_p^\wedge,Y_p^\wedge]\otimes_{\mathbb Z}\mathbb Q \to [X_p^\wedge,\tau_{\leq k}Y_p^\wedge]\otimes_{\mathbb Z}\mathbb Q
        \]
        is surjective for all $k\in\mathbb Z$. Note that
        \[
            [X_p^\wedge,\tau_{\leq k}Y_p^\wedge]\otimes_{\mathbb Z}\mathbb Q=[\tau_{\leq k}X_p^\wedge,\tau_{\leq k}Y_p^\wedge]\otimes_{\mathbb Z}\mathbb Q = \prod_{c\leq k} \Hom_{\mathbb Q_p}(\pi_c(X_p^\wedge)\otimes_{\mathbb Z}\mathbb Q,\pi_c(Y_p^\wedge)\otimes_{\mathbb Z}\mathbb Q)
        \]
        is the endomorphism group of $p$-adic rational homotopy groups up to degree $k$. We say that the pair $(X,Y)$ is \emph{AP} if $(X,\Sigma^jY)$ is preAP for all $j\in\mathbb Z$.
    \end{definition}
    \begin{example}\label{ex:BP1AP}
        Consider the case $X=Y=\ell$ where $\ell$ is the connective Adams summand of the $p$-local connective complex $K$-theory, which is a form of $\BP\langle 1\rangle$. We would like to show that $(\ell,\ell)$ is preAP.
        
        For each $r\in1+p\mathbb Z_p$, there is an Adams operation $\Psi^r$, which is an $\mathbb E_\infty$-ring automorphism of $\ell$ and acts on homotopy groups by sending $v_1$ to $rv_1$.

        Let $k$ be a fixed nonnegative integer, and let $r_0,r_1,\dots,r_k\in 1+p\mathbb Z_p$ be any distinct elements. Then, the image of $\Psi^{r_i}$ under the map
        \[
            [\ell_p^\wedge,\ell_p^\wedge]\to\prod_{c\leq 2k} \Hom_{\mathbb Q_p}(\pi_c(\ell_p^\wedge)\otimes_{\mathbb Z}\mathbb Q,\pi_c(\ell_p^\wedge)\otimes_{\mathbb Z}\mathbb Q) = \mathbb Q_p^{k+1}
        \]
        is the vector
        \[
            \begin{bmatrix}
                1\\r_i\\\vdots\\r_i^k
            \end{bmatrix}.
        \]
        Therefore, the images of the $\Psi^{r_i}$'s form a Vandermonde matrix, which is known to be nonsingular. Since $k$ was arbitrary, this proves that $(\ell,\ell)$ is preAP. Later, we shall prove that the pair is actually AP (Theorem \ref{thm:BPnAP}).
    \end{example}
    \begin{lemma}\label{lem:plocallift}
        Let $Y$ be a $p$-local spectrum. Given a map $f:X\to Y_p^\wedge$, it lifts to a map $\tilde{f}:X\to Y$ making the diagram
        \[
            \begin{tikzcd}
                X\ar[r,"\tilde{f}"]\ar[dr,"f",swap]& Y\ar[d]\\
                &Y_p^\wedge
            \end{tikzcd}
        \]
        commutative, if and only if there is a lift after taking $\pi_\ast$, i.e. there is a map of graded abelian groups $F_\ast$ making the diagram
        \[
            \begin{tikzcd}
                \pi_\ast (X)\ar[r,"F_\ast"]\ar[dr,swap,"\pi_\ast (f)"]& \pi_\ast(Y)\ar[d]\\
                &\pi_\ast (Y_p^\wedge)
            \end{tikzcd}
        \]
        commutative.
    \end{lemma}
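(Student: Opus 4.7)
The forward direction is immediate: given any spectrum-level lift $\tilde f$, one simply takes $F_\ast := \pi_\ast(\tilde f)$. All the content is therefore in the converse.

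My plan for the converse is to use the arithmetic fracture square for the $p$-local spectrum $Y$,
\[
\begin{tikzcd}
Y \arrow[r] \arrow[d] & Y_p^\wedge \arrow[d] \\
Y_\mathbb{Q} \arrow[r] & (Y_p^\wedge)_\mathbb{Q},
\end{tikzcd}
\]
which is a homotopy pullback. The associated Mayer--Vietoris sequence
\[
[X, \Omega (Y_p^\wedge)_\mathbb{Q}] \to [X, Y] \to [X, Y_p^\wedge] \oplus [X, Y_\mathbb{Q}] \to [X, (Y_p^\wedge)_\mathbb{Q}]
\]
reduces the construction of $\tilde f$ to producing a map $g: X \to Y_\mathbb{Q}$ whose image in $[X, (Y_p^\wedge)_\mathbb{Q}]$ agrees with that of $f$.

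The key observation is that $Y_\mathbb{Q}$ and $(Y_p^\wedge)_\mathbb{Q}$ are rational spectra, hence each splits as a wedge of shifted Eilenberg--MacLane spectra over $\mathbb{Q}$. By the universal coefficient theorem with rational coefficients, it follows that for any rational spectrum $Z$,
\[
[X, Z] = \prod_i \Hom_\mathbb{Q}(\pi_i X \otimes_\mathbb{Z} \mathbb{Q},\, \pi_i Z),
\]
so maps into $Z$ are classified by their effect on rational homotopy groups. I define $g: X \to Y_\mathbb{Q}$ to realize $F_\ast \otimes_\mathbb{Z} \mathbb{Q}$. By hypothesis, the composite of $F_\ast$ with the natural map $\pi_\ast Y \to \pi_\ast Y_p^\wedge$ equals $\pi_\ast(f)$; rationalizing, the two composites $X \to (Y_p^\wedge)_\mathbb{Q}$ induced by $f$ and by $g$ agree on $\pi_\ast \otimes \mathbb{Q}$, and hence coincide in $[X, (Y_p^\wedge)_\mathbb{Q}]$. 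The exact sequence above then provides the desired lift $\tilde f: X \to Y$.

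I do not anticipate any substantive obstacle; this is routine fracture-square bookkeeping. The one step requiring mild care is the classification of maps into a rational spectrum by their $\pi_\ast \otimes \mathbb{Q}$-effect, which is a standard consequence of the splitting of rational spectra into Eilenberg--MacLane summands. Notably, no finite-type hypothesis on $X$ is required for this argument, although in the applications later in the paper $X$ will be $p$-local of finite type.
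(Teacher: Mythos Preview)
Your proof is correct and takes essentially the same route as the paper. Both arguments use the arithmetic fracture square for $Y$ together with the fact that maps into a rational spectrum are determined by their effect on homotopy groups; the paper phrases this as showing that the obstruction class in $[X,\cof(Y\to Y_p^\wedge)]$ vanishes, while you equivalently produce the companion map $g:X\to Y_{\mathbb Q}$ and invoke the Mayer--Vietoris exact sequence.
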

    \begin{proof}
        The only if part is obvious. Let us prove the if part. Since the arithmetic fracture square
        \[
            \begin{tikzcd}
                Y\ar[r]\ar[d]&Y_p^\wedge\ar[d]\\
                Y\otimes\mathbb Q\ar[r] &Y_p^\wedge\otimes\mathbb Q
            \end{tikzcd}
        \]
        is a pullback diagram, we can see that
        \[
            \cof(Y\to Y_p^\wedge)\simeq \cof(Y\otimes\mathbb Q\to Y_p^\wedge\otimes\mathbb Q)
        \]
        is a rational spectrum. Then, whether the composite
        \[
            X\to Y_p^\wedge \to \cof(Y\to Y_p^\wedge)
        \]
        is zero or not can be checked at the level of homotopy groups, since maps to rational spectra are determined by the map of homotopy groups. Since the map $\pi_\ast(X)\to\pi_\ast(Y_p^\wedge)$ factors through $\pi_\ast(Y)$, we see that the above composite is zero and that $f$ lifts to a map $\tilde{f}:X\to Y$.
    \end{proof}
    \begin{proposition}[cf. \cite{AP}]\label{prop:AParg}
        Let $X$ and $Y$ be $p$-local spectra of finite type. If $(X,Y)$ is preAP, then for any map $f:X_p^\wedge\to Y_p^\wedge$ and a positive integer $r$, there exists $\tilde{g}:X\to Y$ such that $f/p^r \simeq \tilde{g}/p^r$ as maps from $X_p^\wedge/p^r=X/p^r$ to $Y_p^\wedge/p^r=Y/p^r$. In particular, if there is an equivalence $X_p^\wedge\simeq Y_p^\wedge$, then there is an equivalence $X\simeq Y$.
    \end{proposition}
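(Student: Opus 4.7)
The plan is to apply Lemma~\ref{lem:plocallift}, which reduces the problem to producing a map $h: X \to Y_p^\wedge$ satisfying $h \equiv f \pmod{p^r}$ in $[X, Y_p^\wedge]$ and whose induced map $h_*$ on homotopy groups factors through $\pi_* Y \hookrightarrow \pi_* Y_p^\wedge$. The lemma then supplies a lift $\tilde g: X \to Y$ with $\tilde g$ composed with $Y \to Y_p^\wedge$ equal to $h$; since $h \equiv f \pmod{p^r}$, the relation $\tilde g/p^r \simeq f/p^r$ follows by functoriality of the cofiber-of-$p^r$ construction (the discrepancy $(h-f)/p^r$ factors through the self-map of $Y_p^\wedge/p^r$ induced by $p^r: Y_p^\wedge\to Y_p^\wedge$, which composes the canonical map with a null composite).

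I would then produce $h$ in the form $h = f - p^r z$ for a to-be-chosen $z : X \to Y_p^\wedge$. The constraint $h_*(\pi_c X) \subseteq \pi_c Y$ is equivalent to requiring $p^r z_* \equiv f_* \pmod{\pi_c Y}$ in $\pi_c Y_p^\wedge$. Since the quotient $\pi_c Y_p^\wedge/\pi_c Y = \pi_c Y \otimes (\mathbb Z_p/\mathbb Z_{(p)})$ is a $\mathbb Q$-vector space and is therefore uniquely $p^r$-divisible, the class of $f_*(x)$ in this quotient has a unique $p^r$-th root $\zeta_c(x)$. The short exact sequence $\pi_c Y \to \pi_c Y_p^\wedge \to \pi_c Y_p^\wedge/\pi_c Y$ splits (the quotient is divisible, hence an injective abelian group), and picking such a splitting lifts $\zeta_c$ to a $\mathbb Z_{(p)}$-linear candidate homomorphism $z_* : \pi_c X \to \pi_c Y_p^\wedge$. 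Crucially, $z_*$ is determined only modulo homomorphisms landing in $\pi_c Y$, and that freedom will be used in the realization.

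The realization of $z_*$ by a spectrum map $z \in [X, Y_p^\wedge]$ is the main technical step, and this is where the preAP hypothesis is decisive. For each $k$, the surjection
\[
    [X_p^\wedge, Y_p^\wedge] \otimes \mathbb Q \twoheadrightarrow [X_p^\wedge, \tau_{\leq k} Y_p^\wedge] \otimes \mathbb Q = \prod_{c \leq k} \Hom_{\mathbb Q_p}(\pi_c(X_p^\wedge)\otimes \mathbb Q, \pi_c(Y_p^\wedge)\otimes\mathbb Q)
\]
realizes the truncation $\tau_{\leq k} z_*$ by an element of $[X_p^\wedge, Y_p^\wedge]$ after inverting an integer coprime to $p$; the residual $\mathbb Z_{(p)}^\times$-ambiguity is absorbed by altering $z_*$ by homomorphisms into $\pi_* Y$, which contribute trivially to the constraint modulo $\pi_* Y$. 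The main obstacle is to coherently patch these truncated realizations across all $k$ into a single spectrum map $z$. I would carry this out by induction up the Postnikov tower of $Y$, using the bounded-below finite-type hypothesis to make the Postnikov tower convergent and to control the $\lim^1$ contributions that appear in passing from $\lim_k \tau_{\leq k} Y$ back to $Y$. Once $z$ is constructed, $h := f - p^r z$ has the required properties and the proposition follows.
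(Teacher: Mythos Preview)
Your overall strategy---reduce to Lemma~\ref{lem:plocallift} by modifying $f$ to a map $h:X\to Y_p^\wedge$ whose effect on homotopy groups lands in $\pi_\ast Y$, while keeping $h\equiv f$ modulo a power of $p$---is exactly right, and matches the paper. But the execution of the realization step contains a genuine error and a genuine gap.

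The error is your claim that preAP realizes $\tau_{\leq k}z_\ast$ ``after inverting an integer coprime to $p$.'' Since $[X_p^\wedge,Y_p^\wedge]$ is already a $\mathbb Z_p$-module, tensoring with $\mathbb Q$ inverts only $p$; the denominator you must clear is a \emph{power of $p$}, say $p^{N_k}$. This cannot be absorbed into the freedom of changing $z_\ast$ by maps landing in $\pi_\ast Y$: multiplying $z_\ast$ by $p^{N_k}$ destroys the constraint $p^r z_\ast\equiv f_\ast\pmod{\pi_\ast Y}$. So the ``residual ambiguity'' argument does not go through as written.

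The gap is the patching. The exponent $N_k$ may grow without bound as $k\to\infty$, so you cannot simply pass to a limit of the truncated realizations. The paper handles this by abandoning the attempt to realize a single $z$ and instead building $g$ directly as a $p$-adically convergent series
\[
    g = f + \sum_{k\geq 0} p^{\,k+r+1} h_k,
\]
where $h_k\in[X,Y_p^\wedge]$ is chosen (via preAP) so that $\pi_i(h_k)=0$ for $i<k$ and $\pi_k(h_k)$ corrects the degree-$k$ matrix into $\mathbb Z_{(p)}$. The increasing powers of $p$ simultaneously guarantee convergence in the $p$-complete function spectrum, preserve the mod $p^r$ reduction, and absorb the $p^{N_k}$ denominators. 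Your Postnikov-tower induction sketch does not supply this mechanism.

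A smaller point: at $p=2$ the self-map $p^r$ on $\mathbb S/p^r$ is \emph{not} null (one has $[\mathbb S/2^r,\mathbb S/2^r]\cong\mathbb Z/2^{r+1}$), so $h=f-p^r z$ need not satisfy $h/p^r\simeq f/p^r$. The paper uses $p^{r+1}$ as the base power precisely because $p^{r+1}=0$ in $[\mathbb S/p^r,\mathbb S/p^r]$.
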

    \begin{proof}
        By taking suspensions, let us assume that $X$ and $Y$ are connective. For each $i\in\mathbb Z$, let us fix identifications
        \begin{align*}
            \pi_i(X) &= \mathbb Z_{(p)}^{\oplus s_i}\oplus\text{(torsion)}\\
            \pi_i(Y) &= \mathbb Z_{(p)}^{\oplus t_i}\oplus\text{(torsion)}
        \end{align*}
        for integers $s_i$ and $t_i$, giving us an identification
        \[
            \pi_i(Y_p^\wedge) = \mathbb Z_p^{\oplus t_i}\oplus\text{(torsion)}.
        \]
        Then, if we ignore the torsion part, given a map $g:X\to Y_p^\wedge$, the induced map $\pi_i(g):\pi_i(X)\to \pi_i(Y_p^\wedge)$ can be represented by a $s_i\times t_i$ matrix with coefficients in $\mathbb Z_p$, which we also denote by $\pi_i(g)$. Note that by Lemma \ref{lem:plocallift}, the map $g$ lifts to a map $X\to Y$ if and only if $\pi_i(g)$ has coefficients in $\mathbb Z_{(p)}$ for all $i$, or as we shall say, \emph{has $p$-local coefficients}.

        Let us write $g_{-1}:=f$ and inductively define maps $g_0,g_1,\dots:X\to Y_p^\wedge$, so that $\pi_i(g_k)$ has $p$-local coefficients for all $i\leq k$.

        Suppose that we have defined $g_{k-1}$ for some $k\geq 0$. Since $(X,Y)$ is preAP, the map
        \[
            [X,Y_p^\wedge]\otimes_{\mathbb Z}\mathbb Q\to [X,\tau_{\leq k}Y_p^\wedge]\otimes_{\mathbb Z}\mathbb Q
        \]
        is surjective. Since the right hand side is a finite dimensional $\mathbb Q_p$-vector space and $[X,Y_p^\wedge]$ is a $\mathbb Z_p$-module, there is an integer $N$ (dependent on $k$) satisfying the following property.
        \begin{quote}
            For any sequence of matrices $A_0,\dots, A_k$ with $\mathbb Z_p$-coefficients where $A_i$ has size $s_i\times t_i$ for each $i$, there is a map $h:X\to Y_p^\wedge$ such that $\pi_i(h)=p^N A_i$ for every $i\leq k$.
        \end{quote}
        Let $B_k$ be a $s_k\times t_k$ matrix with $\mathbb Z_p$-coefficients such that
        \[
            \pi_k(g_{k-1})+p^{k+r+1+N}B_k
        \]
        has $p$-local coefficients. Such a matrix exists since $\mathbb Z_p/\mathbb Z_{(p)}$ is a rational vector space. Then, there is a map $h_k:X\to Y_p^\wedge$ such that $\pi_i(h_k)=0$ for $i<k$ and $\pi_k(h_k)=p^NB_k$. If we define $g_k:=g_{k-1}+p^{k+r+1}h_k$, then $\pi_i(g_k)$ has $p$-local coefficients for all $i\leq k$.

        Let us define $g$ to be the limit of $g_k$, or more precisely, as
        \[
            f+p^{r+1}h_0 + p^{r+2}h_1+\cdots,
        \]
        which makes sense, not necessarily in a unique way, since the function spectrum $F(X,Y_p^\wedge)$ is $p$-complete. Then, by construction, $\pi_i(g)$ has $p$-local coefficients for all $i$, so that $g$ lifts to a map $\tilde{g}:X\to Y$. Also, we have $\tilde{g}/p^r =f/p^r$ since $p^{r+1}$ is zero in $[\mathbb S/p^r,\mathbb S/p^r]$.

        The last assertion of the proposition follows from the fact that an equivalence of $p$-local spectra can be tested modulo $p^r$ for any $r>0$.
    \end{proof}

    \begin{example}\label{ex:Zplusell}
        Let us discuss an example of a pair that is not preAP. Consider the pair
        \[
        (\mathbb Z_{(p)},\ell)=(\BP\langle0\rangle,\BP\langle1\rangle).
        \]
        Then, this pair is not preAP because there is no map $\mathbb Z_p\to\ell_p^\wedge$ that is nonzero on $\pi_0$. In fact, any such map is zero since
        \[
            \Map(\mathbb Z_p,\ell_p^\wedge) = \Map(\mathbb Z_p,\ell_p^\wedge[v_1^{-1}])
        \]
        and $\ell_p^\wedge[v_1^{-1}]$ is $K(1)$-local while $\mathbb Z_p$ is $K(1)$-acyclic. It also follows that the pair
        \[
            (\mathbb Z_{(p)}\oplus\ell,\mathbb Z_{(p)}\oplus\ell)
        \]
        is not preAP.

        Furthermore, there exists a $p$-local spectrum $X$ of finite type that is equivalent to $\mathbb Z_{(p)}\oplus\ell$ after $p$-completion but not before. From the arithmetic fracture square, there is a long exact sequence
        \[
            \cdots\to[\mathbb Z_{(p)},\ell_p^\wedge\oplus(\ell\otimes\mathbb Q)] \to [\mathbb Z_{(p)},\ell_p^\wedge\otimes\mathbb Q]\to [\mathbb Z_{(p)},\Sigma\ell]\to\cdots
        \]
        and by the previous observation that $[\mathbb Z_{(p)},\ell_p^\wedge]=0$, there is an injection
        \[
            \mathbb Q_p/\mathbb Q\to[\mathbb Z_{(p)},\Sigma\ell]
        \]
        corresponding to maps that become zero after $p$-completion. Then, take any such nonzero map $f:\mathbb Z_{(p)}\to\Sigma\ell$ and let $X$ be the fiber of $f$. Since $f$ becomes zero after $p$-completion, such a nullhomotopy induces a splitting $X_p^\wedge = \mathbb Z_p\oplus \ell_p^\wedge$.

        Suppose that there is a split injection $i:\mathbb Z_{(p)}\to X$. Then, after $p$-completion, we get a split injection $i_p:\mathbb Z_p\to X_p^\wedge=\mathbb Z_p\oplus\ell_p^\wedge$, but since $[\mathbb Z_p, \ell_p^\wedge]=0$, $i_p$ must map $\mathbb Z_p$ isomorphically onto the $\mathbb Z_p$ summand. In other words, the composite
        \[
            \mathbb Z_p \xrightarrow{i_p} X_p\to \mathbb Z_p 
        \]
        is an equivalence, where the second arrow comes from the definition of $X$ as the fiber of $f$. This implies that the composite
        \[
            \mathbb Z_{(p)}\xrightarrow{i} X\to\mathbb Z_{(p)}
        \]
        is an equivalence, which contradicts that the map $X\to \mathbb Z_{(p)}$ does not split.
    \end{example}
    \subsection{AP pairs of spectra}
    In this section, we shall prove various properties of AP pairs of spectra. We shall work with the following equivalent definition in terms of $p$-local spectra instead of $p$-complete spectra.
    \begin{lemma}
        Let $X,Y$ be $p$-local spectra of finite type. Then, the pair $(X,Y)$ is preAP if and only if
        \[
            [X,Y]\otimes_{\mathbb Z}\mathbb Q\to [X,\tau_{\leq k}Y]\otimes_{\mathbb Z}\mathbb Q
        \]
        is surjective for all $k$.
    \end{lemma}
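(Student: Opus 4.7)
The argument proceeds by a base-change comparison along the faithfully flat extension $\mathbb Q \hookrightarrow \mathbb Q_p$, combined with the arithmetic fracture square for $Y$. Since $Y$ is bounded below of finite type, $\tau_{\leq k} Y$ is bounded, and its rationalization splits as a finite direct sum of Eilenberg--MacLane spectra. This gives the explicit formulas
\[
[X, \tau_{\leq k} Y] \otimes_{\mathbb Z} \mathbb Q = \prod_{c \leq k} \Hom_{\mathbb Q}(\pi_c X \otimes \mathbb Q, \pi_c Y \otimes \mathbb Q),
\]
\[
[X_p^\wedge, \tau_{\leq k} Y_p^\wedge] \otimes_{\mathbb Z} \mathbb Q = \prod_{c \leq k} \Hom_{\mathbb Q_p}(\pi_c X \otimes \mathbb Q_p, \pi_c Y \otimes \mathbb Q_p).
\]
Since these products are finite and each factor is finite-dimensional, the second identifies canonically with $\bigl([X, \tau_{\leq k} Y] \otimes \mathbb Q\bigr) \otimes_{\mathbb Q} \mathbb Q_p$.

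Write $A = [X, Y] \otimes \mathbb Q$, $B = [X, \tau_{\leq k} Y] \otimes \mathbb Q$, $A' = [X_p^\wedge, Y_p^\wedge] \otimes \mathbb Q$, and $B' = B \otimes_{\mathbb Q} \mathbb Q_p$. We obtain a commutative square with top row $f\colon A \to B$, bottom row $f'\colon A' \to B'$ (which is $\mathbb Q_p$-linear), left vertical $A \to A'$, and right vertical the faithfully flat inclusion $B \hookrightarrow B'$. For the forward direction, if $f$ is surjective then $\mathrm{image}(f')$ is a $\mathbb Q_p$-subspace of $B'$ containing the $\mathbb Q$-subspace $B$, whose $\mathbb Q_p$-span equals $B' = B \otimes \mathbb Q_p$; hence $f'$ is surjective.

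For the converse, given $\phi \in B$ I construct $\tilde\phi \in A$ with $f(\tilde\phi) = \phi$ via the arithmetic fracture pullback $Y \simeq Y_p^\wedge \times_{(Y_p^\wedge)_{\mathbb Q}} Y_{\mathbb Q}$. Lift $\phi$ to $\tilde\phi_p \in A'$ using the assumed surjectivity of $f'$, and to $\tilde\phi_{\mathbb Q} \in [X, Y_{\mathbb Q}]$ using the automatic projection from rational maps onto their $\tau_{\leq k}$-truncation. The images of these two lifts in $[X, (Y_p^\wedge)_{\mathbb Q}]$ agree on the $\tau_{\leq k}$ part, so the discrepancy lies in $[X, \tau_{>k}(Y_p^\wedge)_{\mathbb Q}]$; I then modify $\tilde\phi_{\mathbb Q}$ within $[X, \tau_{>k} Y_{\mathbb Q}]$ and $\tilde\phi_p$ within $[X, \tau_{>k} Y_p^\wedge] \otimes \mathbb Q$ so the two lifts coincide in $[X, (Y_p^\wedge)_{\mathbb Q}]$, after which the fracture pullback produces the desired $\tilde\phi$.

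The main obstacle is this matching step: one must verify that the joint difference map
\[
[X, \tau_{>k} Y_p^\wedge] \otimes \mathbb Q \;\oplus\; [X, \tau_{>k} Y_{\mathbb Q}] \;\longrightarrow\; [X, \tau_{>k}(Y_p^\wedge)_{\mathbb Q}]
\]
hits the specific discrepancy class. By the arithmetic fracture long exact sequence for $\tau_{>k} Y$, the cokernel of this difference sits inside $[X, \Sigma \tau_{>k} Y] \otimes \mathbb Q$, and checking that the discrepancy vanishes there --- using the preAP hypothesis applied at truncation levels exceeding $k$ --- is the subtle heart of the argument.
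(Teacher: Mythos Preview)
Your forward direction (the ``if'' part) is correct and essentially identical to the paper's: one base-changes along $\mathbb Q \hookrightarrow \mathbb Q_p$ and uses that the truncated target is finite-dimensional.

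For the converse (``only if'') direction, your outline has a genuine gap. You correctly set up the fracture-square lifting problem and identify the obstruction as a class in $[X,\Sigma\tau_{>k}Y]\otimes\mathbb Q$, but you do not actually show this class vanishes; you only say that doing so ``is the subtle heart of the argument.'' The vague appeal to ``preAP at truncation levels exceeding $k$'' is not a proof: preAP controls surjectivity onto $[X_p^\wedge,\tau_{\leq k'}Y_p^\wedge]\otimes\mathbb Q$, and there is no direct mechanism by which this forces a particular boundary class in $[X,\Sigma\tau_{>k}Y]\otimes\mathbb Q$ to be zero. Indeed, the obstruction depends on the \emph{choice} of lift $\tilde\phi_p$, so what is really needed is to choose $\tilde\phi_p$ carefully---and your argument gives no indication of how to do this. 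There is also a secondary issue you do not address: even if the discrepancy $\delta'$ lifts to $[X,\tau_{>k}Y_p^\wedge]\otimes\mathbb Q$, you need that lift to extend to an element of $[X,Y_p^\wedge]\otimes\mathbb Q$, since the allowable modifications of $\tilde\phi_p$ live in the image of $[X,\tau_{>k}Y_p^\wedge]\otimes\mathbb Q \to [X,Y_p^\wedge]\otimes\mathbb Q$, not in $[X,\tau_{>k}Y_p^\wedge]\otimes\mathbb Q$ itself.

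The paper sidesteps the obstruction analysis entirely by taking a constructive route: starting from any lift $X\to Y_p^\wedge$ hitting $\phi$ on $\pi_{\leq k}$ (which exists by preAP after multiplying by a power of $p$), it reruns the iterative algorithm of Proposition~\ref{prop:AParg}---adjusting the map degree by degree so that each $\pi_c$ becomes $\mathbb Z_{(p)}$-valued---with the single modification that one sets $h_c=0$ whenever $\pi_c(g_{c-1})$ already has $p$-local coefficients, which preserves the prescribed values for $c\leq k$. The resulting map then lifts to $[X,Y]$ by Lemma~\ref{lem:plocallift}. In effect, the paper's argument \emph{is} the careful choice of $\tilde\phi_p$ that your outline leaves unspecified.
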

    \begin{proof}
        For the if part, consider the square
        \[
            \begin{tikzcd}[]
                [X,Y]\otimes_{\mathbb Z}\mathbb Q\ar[r]\ar[d]&[]
                [X,\tau_{\leq k}Y]\otimes_{\mathbb Z}\mathbb Q\ar[d]\\
                [][X_p^\wedge,Y_p^\wedge]\otimes_{\mathbb Z}\mathbb Q\ar[r]&[][X_p^\wedge,\tau_{\leq k}Y_p^\wedge]\otimes_{\mathbb Z}\mathbb Q.
            \end{tikzcd}
        \]
        Since the lower horizontal arrow is a map of $\mathbb Q_p$-vector spaces, we can basechange the top row along $\mathbb Q\to\mathbb Q_p$. Then, the right vertical arrow becomes an isomorphism, and if the upper horizontal arrow is a surjection, so is the lower horizontal arrow.

        For the only if part, consider a map in
        \[
            f\in[X,\tau_{\leq k}Y]\otimes_{\mathbb Z}\mathbb Q = \prod_{c\leq k} \Hom_{\mathbb Q}(\pi_c(X)\otimes_{\mathbb Z}\mathbb Q,\pi_c(Y)\otimes_{\mathbb Z}\mathbb Q).
        \]
        Since $(X,Y)$ is preAP, after multiplying a large power of $p$ to $f$ if necessary, there is a map $X\to Y_p^\wedge$ such that the map induces $f$ on homotopy groups of degree $\leq k$. Then, from the proof of Proposition \ref{prop:AParg}, there is a map $X\to Y_p^\wedge$ such that it induces $f$ on homotopy groups of degree $\leq k$ and it lifts to a map $X\to Y$. This can be done by modifying the proof to set $h_k=0$ whenever $\pi_k(g_{k-1})$ already has $p$-local coefficients.
    \end{proof}

    \begin{proposition}\label{prop:tensorfin}
        Let $A$ be a finite spectrum such that $A\otimes\mathbb Q\neq 0$. Let $X, Y$ be $p$-local spectra of finite type.
        \begin{enumerate}
            \item The pair $(X,Y)$ is AP if and only if $(X\otimes A, Y)$ is AP.
            \item The pair $(X,Y)$ is AP if and only if $(X,Y\otimes A)$ is AP.
        \end{enumerate}
    \end{proposition}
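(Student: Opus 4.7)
The plan is to use Spanier--Whitehead duality to reduce (a) to (b): the adjunction $[X\otimes A,\Sigma^j Y]\cong [X,\Sigma^j Y\otimes DA]$ identifies the mapping groups appearing in the preAP condition for $(X\otimes A,\Sigma^j Y)$ with those for $(X,\Sigma^j Y\otimes DA)$, and since $DA$ is again a finite spectrum with $DA\otimes\mathbb{Q}=D(A\otimes\mathbb{Q})\neq 0$, statement (a) follows from (b) applied to $(X,Y)$ and the finite spectrum $DA$.

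To prove (b), I would first split $A$ rationally. Since $A\otimes\mathbb{Q}$ is a nonzero finite rational spectrum it is equivalent to $\bigoplus_{i\in S}\Sigma^{d_i}\mathbb{Q}$ for some nonempty finite multiset $S$. Choose maps $\phi_i\colon \Sigma^{d_i}\mathbb{S}\to A$ lifting a $\mathbb{Q}$-basis of $\pi_{d_i}A\otimes\mathbb{Q}$ for each $i\in S$, so that $\Phi:=\bigoplus_i\phi_i\colon \bigoplus_{i\in S}\Sigma^{d_i}\mathbb{S}\to A$ is a rational equivalence. Let $C$ denote its cofiber. Then $C$ is a finite spectrum with $C\otimes\mathbb{Q}=0$, so $[C,C]$ is finitely generated and rationally zero, hence finite, and some integer $N$ annihilates $\mathrm{id}_C$; consequently $N$ kills $Y\otimes C$ as a spectrum and $[X,Y\otimes C]\otimes\mathbb{Q}=0$ for any $X$. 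Tensoring the cofiber sequence $\bigoplus_i\Sigma^{d_i}\mathbb{S}\to A\to C$ with $Y$ and applying $[X,-]\otimes\mathbb{Q}$ to the resulting long exact sequence, the $C$-terms vanish, yielding a natural isomorphism
\[
\bigoplus_{i\in S}[X,\Sigma^{d_i}Y]\otimes\mathbb{Q} \xrightarrow{\ \sim\ } [X,Y\otimes A]\otimes\mathbb{Q}.
\]
The same rational splitting $\Phi_{\mathbb{Q}}$ induces $\pi_c(Y\otimes A)\otimes\mathbb{Q}=\bigoplus_{i\in S}\pi_{c-d_i}Y_{\mathbb{Q}}$, and since $S$ is finite the product $\prod_{c\leq k}$ commutes with the direct sum over $i\in S$ on the target side.

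Under these compatible identifications, the preAP comparison map for $(X,Y\otimes A)$ is precisely the direct sum, over $i\in S$, of the preAP comparison maps for $(X,\Sigma^{d_i}Y)$, so $(X,Y\otimes A)$ is preAP iff $(X,\Sigma^{d_i}Y)$ is preAP for every $i\in S$. Running the same analysis after replacing $Y$ by $\Sigma^j Y$, I conclude that $(X,Y\otimes A)$ is AP iff $(X,\Sigma^{j+d_i}Y)$ is preAP for all $j\in\mathbb{Z}$ and $i\in S$; since $S$ is nonempty, $\{j+d_i : j\in\mathbb{Z},\,i\in S\}=\mathbb{Z}$, and this is equivalent to $(X,Y)$ being AP. The point requiring care is that the source decomposition $[X,Y\otimes A]\otimes\mathbb{Q}\cong\bigoplus_i[X,\Sigma^{d_i}Y]\otimes\mathbb{Q}$ and the target decomposition of $\pi_c(Y\otimes A)_{\mathbb{Q}}$ are both induced by the \emph{same} rational splitting $\Phi_{\mathbb{Q}}$, so that the preAP comparison map is genuinely diagonal in these decompositions; this is automatic by naturality once consistent splittings are chosen.
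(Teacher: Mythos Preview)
Your argument for (b) is correct and takes a genuinely different route from the paper. The paper proves (a) first, using the identification $[X\otimes A,Y]\otimes\mathbb Q=[A\otimes\mathbb Q,F(X,Y)\otimes\mathbb Q]$ to reduce to the case where $A$ is a wedge of spheres, and then deduces (b) from (a) via Spanier--Whitehead duality, carefully comparing $(\tau_{\leq k}Y)\otimes A$ with $\tau_{\leq k}(Y\otimes A)$ through Postnikov factorizations. You invert this logic: you prove (b) directly by choosing a rational cell decomposition of $A$, using that the cofiber $C$ satisfies $N\cdot\mathrm{id}_C=0$ so that tensoring with $C$ vanishes rationally, and then try to deduce (a) from (b) by duality. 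Your direct argument for (b) is cleaner than the paper's, because it avoids the truncation comparisons entirely.

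The gap is in your reduction of (a) to (b). The adjunction $[X\otimes A,\Sigma^j Y]\cong[X,\Sigma^j Y\otimes DA]$ identifies the \emph{sources} of the two preAP comparison maps, but not the targets: under duality, $[X\otimes A,\tau_{\leq k}\Sigma^j Y]$ becomes $[X,(\tau_{\leq k}\Sigma^j Y)\otimes DA]$, which is not $[X,\tau_{\leq k}(\Sigma^j Y\otimes DA)]$. So preAP for $(X\otimes A,\Sigma^j Y)$ is not literally identified with preAP for $(X,\Sigma^j Y\otimes DA)$; showing these are equivalent (as $k$ varies) requires precisely the kind of truncation comparison the paper carries out in its proof of (b), and you have not supplied it. The simplest fix is to abandon the duality reduction and instead run your rational-splitting argument directly for (a): tensoring your cofiber sequence with $X$ shows $X\otimes C$ is $N$-torsion, so $[X\otimes C,\Sigma^j Y]\otimes\mathbb Q=0$, and naturality of the preAP comparison map in the \emph{first} variable gives the same diagonal decomposition as before.
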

    \begin{proof}
        \begin{enumerate}[leftmargin=20pt]
            \item We have natural isomorphisms
            \[
                [X\otimes A, Y] \otimes_{\mathbb Z}\mathbb Q
                =[A,F(X,Y)]\otimes_{\mathbb Z}\mathbb Q
                =[A\otimes\mathbb Q, F(X,Y)\otimes \mathbb Q]
            \]
            where we use that $A$ is finite for the second isomorphism. Since $A\otimes{\mathbb Q}$ is free over $\mathbb Q$, we may as well assume that $A$ is a direct sum of spheres. In that case, the statement follows from the definitions.
            \item We first prove the only if part. It is enough to prove that $(X,Y\otimes A)$ is preAP since we can repeat the argument after replacing $Y$ with suspensions of $Y$.
            
            By (a) applied to the pair $(X\otimes DA, Y)$ where $DA$ is the Spanier-Whitehead dual of $A$, we have that
            \[
                [X,Y\otimes A]\otimes_{\mathbb Z}\mathbb Q\to [X, (\tau_{\leq k} Y)\otimes A]\otimes_{\mathbb Z}\mathbb Q
            \]
            is surjective for any integer $k$. If $A$ is $c$-connective for some $c\in\mathbb Z$, then the $k$-th Postnikov truncation of $Y\otimes A$ factors as
            \[
                Y\otimes A\to (\tau_{\leq k-c} Y)\otimes A\to \tau_{\leq k}(Y\otimes A).
            \]
            Therefore, to show that
            \[
                [X,Y\otimes A]\otimes_{\mathbb Z}\mathbb Q\to[X,\tau_{\leq k}(Y\otimes A)]\otimes_{\mathbb Z}\mathbb Q
            \]
            is surjective, it is enough to show that
            \[
                [X,(\tau_{\leq k-c}Y)\otimes A]\otimes_{\mathbb Z}\mathbb Q\to [X,\tau_{\leq k}(Y\otimes A)]\otimes_{\mathbb Z}\mathbb Q
            \]
            is surjective. However, since $(\tau_{\leq k-c}Y)\otimes A$ and $\tau_{\leq k}(Y\otimes A)$ are bounded and $X$ is of finite type, we have
            \begin{align*}
                [X,(\tau_{\leq k-c}Y)\otimes A]\otimes_{\mathbb Z}\mathbb Q &= [X\otimes\mathbb Q, (\tau_{\leq k-c}Y)\otimes A\otimes\mathbb Q]\\
                [X,\tau_{\leq k}(Y\otimes A)]\otimes_{\mathbb Z}\mathbb Q&=[X\otimes\mathbb Q,\tau_{\leq k}(Y\otimes A)\otimes\mathbb Q]
            \end{align*}
            and since $X\otimes\mathbb Q$ is free, the desired statement follows from the fact that
            \[
                (\tau_{\leq k-c}Y)\otimes A\to \tau_{\leq k}(Y\otimes A)
            \]
            is surjective on rational homotopy groups.

            Next, let us prove the if part. Let $d$ be the dimension of the highest cell of $A$. Then, we have a factorization
            \[
                Y\otimes A\to \tau_{\leq k+d}(Y\otimes A)\otimes\mathbb Q\to (\tau_{\leq k}Y)\otimes A\otimes\mathbb Q
            \]
            since $(\tau_{\leq k}Y)\otimes A$ is rationally $(k+d)$-truncated. If $(X, Y\otimes A)$ is AP, then the map
            \[
                [X,Y\otimes A]\otimes_{\mathbb Z}\mathbb Q\to [X,\tau_{\leq k+d}(Y\otimes A)]\otimes_{\mathbb Z}\mathbb Q
            \]
            is surjective.
            Also, since $\tau_{\leq k+d}(Y\otimes A)$ and $(\tau_{\leq k}Y)\otimes A$ are bounded and $X$ is of finite type, we have
            \begin{align*}
                [X,\tau_{\leq k+d}(Y\otimes A)]\otimes_{\mathbb Z}\mathbb Q&=[X\otimes\mathbb Q,\tau_{\leq k+d}(Y\otimes A)\otimes\mathbb Q]\\
                [X,(\tau_{\leq k}Y)\otimes A]\otimes_{\mathbb Z}\mathbb Q&=[X\otimes\mathbb Q,(\tau_{\leq k}Y)\otimes A\otimes\mathbb Q]
            \end{align*}
            and the map
            \[
                [X,\tau_{\leq k+d}(Y\otimes A)]\otimes_{\mathbb Z}\mathbb Q\to [X,(\tau_{\leq k}Y)\otimes A]\otimes_{\mathbb Z}\mathbb Q
            \]
            induced by
            \[
                \tau_{\leq k+d}(Y\otimes A)\otimes\mathbb Q\to (\tau_{\leq k}Y)\otimes A\otimes\mathbb Q,
            \]
            which is a surjection on homotopy groups, is a surjection since $X\otimes\mathbb Q$ is free over $\mathbb Q$.

            Therefore, we have proved that
            \[
                [X,Y\otimes A]\to [X,(\tau_{\leq k}Y)\otimes A]
            \]
            is surjective for all $k$. Taking the Spanier-Whitehead dual of $A$, we have that $(X\otimes DA, Y)$ is preAP. By replacing $Y$ with suspensions of $Y$, we also have that $(X\otimes DA, Y)$ is AP. By (a), this implies that $(X,Y)$ is AP.\qedhere
        \end{enumerate}
    \end{proof}

    \begin{proposition}\label{prop:tmfBP2}
        At any prime $p$, the pair $(\tmf_{(p)},\tmf_{(p)})$ is AP if and only if $(\BP\langle 2\rangle,\BP\langle 2\rangle)$ is AP.
    \end{proposition}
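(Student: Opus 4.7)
The plan is to apply Proposition \ref{prop:tensorfin} twice, interposing a finite spectrum $A=A_p$ chosen so that $\tmf_{(p)}\otimes A$ splits as a finite direct sum of even suspensions of $\BP\langle 2\rangle$. The key external input is the existence, at each prime $p$, of such a finite spectrum $A$ with $A\otimes\mathbb Q\neq 0$ and
\[
    \tmf_{(p)}\otimes A \simeq \bigoplus_{j=1}^{k}\Sigma^{2i_j}\BP\langle 2\rangle
\]
$p$-locally. For $p\geq 5$ this is Theorem \ref{thm:introtmf} with $A=\mathbb S_{(p)}$. For $p=2$ one takes $A$ to be the finite $8$-cell complex $DA(1)$, and at $p=3$ an analogous low-cell finite complex suffices; in each case the splitting follows from the known description of $H^\ast(\tmf;\mathbb F_p)$ as an $\mathcal A$-module together with Theorem \ref{thm:intro1s} in the case $n=2$, which is available for all primes under consideration.

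Assuming such an $A$, the argument is formal. First note that the AP property is additive and suspension-invariant in each variable: since $[-,-]$ is biadditive and $\tau_{\leq k}(-)$ preserves finite direct sums, $(X_1\oplus X_2, Y)$ is AP if and only if both $(X_i, Y)$ are AP, and similarly in the second slot; and $(\Sigma^a X,\Sigma^b Y)$ is AP if and only if $(X,Y)$ is AP directly from the definition. Setting $Y:=\tmf_{(p)}\otimes A$, we obtain the chain of equivalences
\begin{align*}
    (\tmf_{(p)},\tmf_{(p)})\text{ AP}
    &\Leftrightarrow (Y,\tmf_{(p)})\text{ AP}\\
    &\Leftrightarrow (\BP\langle 2\rangle,\tmf_{(p)})\text{ AP}\\
    &\Leftrightarrow (\BP\langle 2\rangle,Y)\text{ AP}\\
    &\Leftrightarrow (\BP\langle 2\rangle,\BP\langle 2\rangle)\text{ AP},
\end{align*}
where the first and third equivalences use Proposition \ref{prop:tensorfin}(a) and (b) respectively, and the second and fourth use the splitting of $Y$ together with the additivity and suspension-invariance of AP.

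The main obstacle is the construction of $A$ at the exceptional primes $p=2,3$, where $\tmf_{(p)}$ itself is not a wedge of suspensions of $\BP\langle 2\rangle$. This step is computational and relies on the explicit module structure of $H^\ast(\tmf;\mathbb F_p)$ over the Steenrod algebra at these small primes, applied through Theorem \ref{thm:intro1s} to promote a cohomological splitting of $\tmf_{(p)}\otimes A$ to a genuine $p$-local splitting.
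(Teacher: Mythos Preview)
Your proof is correct and follows essentially the same approach as the paper: reduce to $\BP\langle 2\rangle$ via a finite complex $A$ with $\tmf_{(p)}\otimes A$ a wedge of even suspensions of $\BP\langle 2\rangle$, then apply Proposition \ref{prop:tensorfin} in both variables together with additivity and suspension-invariance of the AP property. The only difference is in sourcing the splitting at $p=2,3$: the paper cites an external reference (\cite[Theorem 1.2]{Htmf}) directly giving such an $A$, whereas you propose recovering it from cohomology via the forward reference to Theorem \ref{thm:intro1s}; both are acceptable, though your route still requires naming the finite complex and verifying the $\mathcal A$-module computation, which is precisely what the cited reference packages.
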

    \begin{proof}
        By \cite[Theorem 1.2]{Htmf}, at $p=2$ or $3$, there is a finite spectrum $A$ such that $\tmf_{(p)}\otimes A$ is a sum of suspensions of $\BP\langle 2\rangle$. Therefore, the statement follows from Proposition \ref{prop:tensorfin}. For $p\geq 5$, we shall later prove that $\tmf_{(p)}$ is a direct sum of suspensions of $\BP\langle 2\rangle$'s (Corollary \ref{cor:tmfsplit}).
    \end{proof}

    \begin{remark}
        The hypothesis that $Y$ is of finite type is not used in the proof of Proposition \ref{prop:tensorfin}. However, in the cases of our interest, $Y$ will always be of finite type.
    \end{remark}

    The following statement is about the uniqueness of $p$-local lifts of maps in Lemma \ref{lem:plocallift}.
    \begin{proposition}\label{prop:fracture}
        Let $X$ and $Y$ be $p$-local spectra of finite type such that $(X,Y)$ is preAP. Then, the natural map
        \[
            [X,Y_p^\wedge]\oplus[X,Y\otimes\mathbb Q] \to [X,Y_p^\wedge\otimes\mathbb Q]
        \]
        is surjective and the natural map
        \[
            [X,\Sigma Y]\to[X,\Sigma Y_p^\wedge]
        \]
        is injective.
    \end{proposition}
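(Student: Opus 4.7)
The plan is to apply $[X,-]$ to the arithmetic fracture cofiber sequence $Y \to Y_p^\wedge \oplus (Y\otimes\mathbb{Q}) \to Y_p^\wedge\otimes\mathbb{Q}$ and extract the Mayer-Vietoris long exact sequence
\[
\cdots \to [X,Y_p^\wedge]\oplus[X,Y\otimes\mathbb{Q}] \xrightarrow{\alpha} [X,Y_p^\wedge\otimes\mathbb{Q}] \xrightarrow{\partial} [X,\Sigma Y] \xrightarrow{\beta} [X,\Sigma Y_p^\wedge]\oplus[X,\Sigma Y\otimes\mathbb{Q}] \xrightarrow{\alpha'} [X,\Sigma Y_p^\wedge\otimes\mathbb{Q}].
\]
The first assertion is precisely surjectivity of $\alpha$; by exactness this is equivalent to $\partial=0$ and hence to injectivity of $\beta$. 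Granted this, the second assertion follows quickly: if $\phi\in[X,\Sigma Y]$ maps to $0$ in $[X,\Sigma Y_p^\wedge]$, then $\beta(\phi)=(0,\phi_\mathbb{Q})$ lies in $\ker(\alpha')$, so $\phi_\mathbb{Q}$ vanishes in $[X,\Sigma Y_p^\wedge\otimes\mathbb{Q}]$; since the map $[X,\Sigma Y\otimes\mathbb{Q}] \to [X,\Sigma Y_p^\wedge\otimes\mathbb{Q}]$ is induced degree-wise by the inclusion $\pi_{c+1}(Y)\otimes\mathbb{Q} \hookrightarrow \pi_{c+1}(Y)\otimes\mathbb{Q}_p$, it is injective, so $\phi_\mathbb{Q}=0$, $\beta(\phi)=0$, and therefore $\phi=0$ by the injectivity of $\beta$.

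For surjectivity of $\alpha$, I would set $V_c:=\pi_c(X)\otimes\mathbb{Q}$ and $W_c:=\pi_c(Y)\otimes\mathbb{Q}$, both finite-dimensional, so that $[X,Y_p^\wedge\otimes\mathbb{Q}] = \prod_c\Hom_\mathbb{Q}(V_c,W_c\otimes\mathbb{Q}_p)$ and $[X,Y\otimes\mathbb{Q}] = \prod_c\Hom_\mathbb{Q}(V_c,W_c)$. Given $h\in[X,Y_p^\wedge\otimes\mathbb{Q}]$, it suffices to produce $f_p\in[X,Y_p^\wedge]$ with $\pi_c(f_p)-h_c\in\Hom_\mathbb{Q}(V_c,W_c)$ for every $c$: the resulting rational residues then assemble into a map $f_\mathbb{Q}\in[X,Y\otimes\mathbb{Q}]$ (realizable because $Y\otimes\mathbb{Q}$ is a sum of rational Eilenberg-MacLane spectra), and $\alpha(f_p,f_\mathbb{Q})=h$. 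Equivalently, the natural map $[X,Y_p^\wedge] \to \prod_c\Hom_\mathbb{Q}(V_c,W_c\otimes(\mathbb{Q}_p/\mathbb{Q}))$ must be surjective.

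I would construct $f_p$ as a $p$-adically convergent infinite sum $f_p=\sum_k u_k$, mirroring the proof of Proposition \ref{prop:AParg}. After suspending to make $X$ and $Y$ connective, at each stage $k$ I would produce $u_k\in[X,Y_p^\wedge]$ with $\pi_c(u_k)=0$ for $c<k$ and with residue $\overline{\pi_k(u_k)}$ equal to the correction $\bar\tau_k:=\overline{h_k}-\sum_{j<k}\overline{\pi_k(u_j)}\in\Hom_\mathbb{Q}(V_k,W_k\otimes(\mathbb{Q}_p/\mathbb{Q}))$. The existence of $u_k$ combines preAP with the divisibility of $\mathbb{Q}_p/\mathbb{Q}$: preAP realizes $p^N$ times any prescribed lift in degree $k$ (with $\pi_c$ rationally vanishing for $c<k$), and divisibility lets one solve $p^{N+M_k}\bar\eta_k=\bar\tau_k$ for arbitrarily large $M_k$, so that multiplying by $p^{M_k}$ yields a $u_k$ that realizes $\bar\tau_k$ and lies in $p^{M_k}F(X,Y_p^\wedge)$. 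Choosing $M_k$ large enough (exceeding the $p$-torsion exponents of $\pi_{<k}(Y)$ to make $\pi_c(u_k)$ strictly vanish for $c<k$, and tending to infinity for convergence) lets the sum converge in the $p$-complete spectrum $F(X,Y_p^\wedge)$ to the desired $f_p$. The main obstacle is precisely this interplay: preAP only realizes $p^N$-multiples, but the divisibility of $\mathbb{Q}_p/\mathbb{Q}$ absorbs the $p^N$-factor once we reduce modulo the rational subspace, which is what makes the inductive construction work.
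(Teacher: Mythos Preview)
Your proof is correct and follows essentially the same approach as the paper: both first reduce the two assertions to each other via the Mayer--Vietoris long exact sequence of the arithmetic fracture square (using injectivity of $[X,\Sigma Y\otimes\mathbb Q]\to[X,\Sigma Y_p^\wedge\otimes\mathbb Q]$), and then prove surjectivity of $[X,Y_p^\wedge]\to[X,Y_p^\wedge\otimes\mathbb Q]/[X,Y\otimes\mathbb Q]$ by an inductive $p$-adically convergent construction that combines the preAP hypothesis with the divisibility of $\mathbb Q_p/\mathbb Q$, exactly as in the proof of Proposition~\ref{prop:AParg}. Your extra care about making $\pi_c(u_k)$ strictly (not just rationally) vanish for $c<k$ is a harmless refinement; the paper works only with the free-part matrices and so never needs it.
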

    \begin{proof}
        We first show that the two conclusions are equivalent. From the long exact sequence induced by the arithmetic fracture square of $Y$, the map
        \[
        [X,Y_p^\wedge]\oplus[X,Y\otimes\mathbb Q] \to [X,Y_p^\wedge\otimes\mathbb Q]
        \]
        is surjective if and only if
        \[
            [X,\Sigma Y]\to[X,\Sigma Y_p^\wedge]\oplus[X,\Sigma Y\otimes\mathbb Q]
        \]
        is injective. This is equivalent to
        \[
            [X,\Sigma Y]\to[X,\Sigma Y_p^\wedge]
        \]
        being injective by using the next arrow in the long exact sequence and the fact that
        \[
            [X,\Sigma Y\otimes\mathbb Q]\to[X,\Sigma Y_p^\wedge\otimes\mathbb Q]
        \]
        is injective.

        Next, we shall show that the map
        \[
            [X,Y_p^\wedge]\to[X,Y_p^\wedge\otimes\mathbb Q]/[X,Y\otimes\mathbb Q]
        \]
        is surjective. The argument is similar to the one for Proposition \ref{prop:AParg}. By taking suspensions if necessary, let us assume that $X$ and $Y$ are connective. For each $i$, let us fix an identification
        \begin{align*}
        \pi_i(X) &= \mathbb Z_{(p)}^{\oplus s_i}\oplus\text{(torsion)}\\
            \pi_i(Y) &= \mathbb Z_{(p)}^{\oplus t_i}\oplus\text{(torsion)}.
            \end{align*}
        Then, an element
        \[
            f\in [X,Y_p^\wedge\otimes\mathbb Q]/[X,Y\otimes\mathbb Q]
        \]
        is a collection consisting of a $s_i\times t_i$ matrix, say $\pi_i(f)$, valued in $\mathbb Q_p/\mathbb Q$ for each $i$.

        Let $g_{-1}$ be the zero map from $X$ to $Y_p^\wedge$. We inductively define $g_k$ as follows. Let $N$ be a large integer and let $B$ be a $\mathbb Z_p$-valued matrix such that
        \[
            \pi_k(g_{k-1})+p^{N+k}B = \pi_k(f) \pmod{\mathbb Q}.
        \]
        This is possible (for any $N$) since the natural map $p^{N+k}\mathbb Z_p\to\mathbb Q_p/\mathbb Q$ is surjective. Since $(X,Y)$ is preAP, if $N$ was chosen large enough, then there is a map $h_k:X\to Y_p^\wedge$ such that $\pi_i(h_k)$ is zero for $i<k$ and $\pi_k(h_k)=p^NB$. Finally, we define
        \[
            g_k = g_{k-1} + p^kh_k.
        \]
        Then, the limit of $g_k$, i.e.
        \[
            \sum_{k\geq 0}p^k h_k
        \]
        exists in $[X,Y_p^\wedge]$ and it maps to $f$ by construction.
    \end{proof}
    
    \section{Uniqueness of \textp-local \textbpn}\label{sec:uniqueplocal}
        The goal of this section is to prove Theorem \ref{thm:introplocal}, i.e. to recover the homotopy type of $\BP\langle n\rangle$ from that of $\BP\langle n\rangle_p^\wedge$.
    \subsection{Action of Ext on homotopy groups}
    Recall from Example \ref{ex:KD} that we have
    \[
        \Ext_{\mathcal A}(\mathcal A//E(n),\mathbb F_p) = \mathbb F_p[v_0,\dots,v_n]
    \]
    for all $n$. Therefore, given an Ext class
    \[
        f\in \Ext_{\mathcal A}^{s,t}(\mathcal A//E(m),\mathcal A//E(n)),
    \]
    it induces a map of $\mathbb F_p$-vector spaces
    \[
        f_\ast : \mathbb F_p[v_0,\dots,v_n]\to \mathbb F_p[v_0,\dots,v_m]
    \]
    given by Yoneda product of Ext classes, shifting the bidegrees by $(s,t)$.
    
    The goal of this section is to compute $f_\ast$ for all Ext classes $f$ and prove Theorem \ref{thm:intro5}. Similar to notes made in Example \ref{ex:KD}, it can be shown that $(v_i f)_\ast = v_i f_\ast$ for all $0\leq i\leq m$.

    If we let
    \[
        E(\infty) = \Lambda(Q_0,Q_1,\dots)\subseteq \mathcal A
    \]
    be the infinite exterior sub-Hopf algebra of $\mathcal A$, then everything in Section \ref{ssec:Ext} applies. In particular, we have
    \[
        \Ext_{\mathcal A}(\mathcal A//E(\infty),\mathbb F_p) = \mathbb F_p[v_0,v_1,\dots].
    \]
    Then, consider the zig-zag
    \[
    \begin{tikzcd}
        \Ext_{\mathcal A}(\mathcal A//E(m),\mathcal A//E(n))\ar[d,"\eta_1"]\\
        \Ext_{\mathcal A}(\mathcal A//E(m),\mathcal A//E(\infty))\\
        \Ext_{\mathcal A}(\mathcal A//E(\infty),\mathcal A//E(\infty))\ar[u,"\eta_2"]
    \end{tikzcd}
    \]
    induced by the obvious maps $\mathcal A//E(n)\to \mathcal A//E(\infty)$ and $\mathcal A//E(m)\to\mathcal A//E(\infty)$. If $\eta_1(f)=\eta_2(g)$ for some
    \begin{align*}
        f&\in \Ext_{\mathcal A}(\mathcal A//E(m),\mathcal A//E(n))\\
        g&\in \Ext_{\mathcal A}(\mathcal A//E(\infty),\mathcal A//E(\infty))
    \end{align*}
    then there is a commutative diagram
    \[
    \begin{tikzcd}
        \mathbb F_p[v_0,v_1,\dots]\ar[r,"g_\ast"]\ar[d]&\mathbb F_p[v_0,v_1,\dots]\ar[d]\\
        \mathbb F_p[v_0,\dots, v_n]\ar[r,"f_\ast"]&\mathbb F_p[v_0,\dots,v_m]
        \end{tikzcd}
    \]
    where the vertical maps are the obvious quotient maps. Therefore, we see that $f_\ast$ is determined by $g_\ast$. Moreover, given $f$, there is always a $g$ such that $\eta_1(f)=\eta_2(g)$ since $\eta_2$ is surjective. Indeed, we have
    \[
        \Ext_{\mathcal A}(\mathcal A//E(m),\mathcal A//E(\infty)) = \Ext_{E(m)}(\mathbb F_p,\mathcal A//E(\infty)) = (\mathcal A//E(\infty))[v_0,\dots,v_m]
    \]
    since the action of $E(m)$ on $\mathcal A//E(\infty)$ is trivial. Similarly, we have
    \[
        \Ext_{\mathcal A}(\mathcal A//E(\infty),\mathcal A//E(\infty))=(\mathcal A//E(\infty))[v_0,v_1,\dots]
    \]
    and $\eta_2$ is the quotient map.

    Therefore, it is enough to consider the case $m=n=\infty$, i.e. given a class
    \[
        f\in\Ext_{\mathcal A}(\mathcal A//E(\infty),\mathcal A//E(\infty))=(\mathcal A//E(\infty))[v_0,v_1,\dots],
    \]
    we shall compute the induced map
    \[
        f_\ast :\mathbb F_p[v_0,v_1,\dots]\to \mathbb F_p[v_0,v_1,\dots].
    \]
    \begin{definition}
        Let $\Seq(L)$ be the collection of all sequences $(r_0,r_1,\dots)$ of nonnegative integers such that
        \[
            \sum_{i\geq 0}r_i=L.
        \]
        Given two sequences $R=(r_0,r_1,\dots),S=(s_0,s_1,\dots)\in\Seq(L)$, we consider the following combinatorial situation.

        Suppose there are $L$ boxes that are linearly ordered, and each box has a nonnegative integer written on it. The first $r_0$ boxes have $0$ written on them, the next $r_1$ boxes have $1$ on them, and so on. Also, suppose there are $L$ balls, each with a nonnegative integer written on it, so that there are exactly $s_i$ balls that have $i$ written on them. We assume that two balls with the same integer on them cannot be distinguished, but two boxes with the same integer on them can be distinguished by their linear order.

        Then, a \emph{majorization class from $S$ to $R$} is a method of placing the balls into the boxes, so that each box contains exactly one ball and the integer written on each box is greater than or equal to the integer on the ball that the box contains. Let us write $\Maj(S,R)$ for the collection of all majorization classes.

        Suppose we are given a majorization class $\sigma\in\Maj(S,R)$, and suppose that a ball with $i$ written on it is contained in a box with $i+j$ written on it. Then, we define the \emph{local index} at the box to be
        \[
            \operatorname{locind}(i,i+j) = \begin{cases}
            (0,0,\dots)&\text{if }j=0\\
            p^i\mathbf e_j&\text{if }j>0
            \end{cases}
        \]
        as an element of $\mathbb Z^{\oplus\infty}$. The \emph{index} of $\sigma$, denoted by $\ind(\sigma)$ is the sum of the local indices over all boxes.
    \end{definition}
    \begin{proposition}\label{prop:Extaction}
        Consider the map
        \[
            f_\ast:\mathbb F_p[v_0,v_1,\dots]\to\mathbb F_p[v_0,v_1,\dots]
        \]
        induced by the class
        \[
            f:=P^I \in \Ext_{\mathcal A}^{0,\ast}(\mathcal A//E(\infty),\mathcal A//E(\infty)),
        \]
        where $I=(i_1,i_2,\dots)$. Let $x:=v_0^{r_0}v_1^{r_1}\cdots$ be a monomial.
        \begin{enumerate}
            \item If
            \[
                \deg(x)+\deg(P^I) <0,
            \]
            then $f_\ast(x)=0$.
            \item If
            \[
                \deg(x)+\deg(P^I)=0,
            \]
            then $f_\ast(x)$ is zero unless $i_k=r_k$ for all $k\geq 1$, in which case we have
            \[
                f_\ast(x) = v_0^{r_0+r_1+r_2+\cdots}.
            \]
        \end{enumerate}
        Here, $\deg$ denotes the topological degree.
    \end{proposition}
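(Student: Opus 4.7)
The plan is to compute $f_\ast(x)$ at the chain level on the Koszul-induced free resolution $P_\bullet := \mathcal A\otimes_{E(\infty)}K(E(\infty))\twoheadrightarrow \mathcal A//E(\infty)$ from Construction~\ref{cons:Ext}. Under this identification, $x=v_0^{r_0}v_1^{r_1}\cdots$ is represented by the cocycle $a\cdot u^{R}\mapsto\epsilon(a)\delta_{R,\vec r}$ (where $\vec r=(r_0,r_1,\dots)$ and $\epsilon\colon \mathcal A\to\mathbb F_p$ is the augmentation), and $f_\ast(x)=x\circ\tilde f$ for any chain lift $\tilde f\colon P_\bullet\to P_\bullet$ of $f$ normalized so that $\tilde f(u^{0})=P^{I}\cdot u^{0}$.

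Part (a) is then immediate from the bigrading: $\mathbb F_p[v_0,v_1,\dots]$ is concentrated in non-negative topological degree because each $v_k$ has $t-s=2p^k-2\ge0$, while $f_\ast(x)$ would live in topological degree $\deg(x)+\deg(P^I)<0$. For part (b), the target bidegree has Adams degree $s:=r_0+r_1+\cdots$ and topological degree $0$, and the unique monomial of that bidegree in $\mathbb F_p[v_0,v_1,\dots]$ is $v_0^{s}$; hence $f_\ast(x)=c\,v_0^{s}$ for some $c\in\mathbb F_p$, detected by pairing against the unique Koszul generator $u^{s\mathbf f_0}$ in this bidegree via $c=\langle x,\tilde f(u^{s\mathbf f_0})\rangle$.

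The core technical step is to exhibit a chain lift satisfying
\[
\tilde f(u^{L\mathbf f_0})=\sum_{|R'|=L}P^{\,I-J(R')}\cdot u^{R'},\qquad J(R'):=(r_1',r_2',r_3',\dots),
\]
for every $L\ge0$, where $R'=(r_0',r_1',\dots)$. I verify this formula by induction on $L$ against the requirement $d\tilde f(u^{L\mathbf f_0})=Q_0\tilde f(u^{(L-1)\mathbf f_0})$: the left side expands via $du^{R'}=\sum_i Q_i u^{R'-\mathbf f_i}$ and the right via the Milnor formula $Q_0P^J=P^JQ_0+\sum_{m\ge1}P^{J-\mathbf e_m}Q_m$ from Lemma~\ref{lem:steenrod}(c); reindexing the left-hand sum by $R''=R'-\mathbf f_i$ and using that $J(R''+\mathbf f_0)=J(R'')$ while $J(R''+\mathbf f_i)=J(R'')+\mathbf e_i$ for $i\ge1$, both expansions coincide term-by-term. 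On the remaining generators $u^{R}$ with $R\ne L\mathbf f_0$, one extends $\tilde f$ using acyclicity of $P_\bullet$ in positive degree; different extensions differ only by boundaries, which vanish against the cocycle $x$. Evaluating against $x$ isolates the $R'=\vec r$ summand and yields $c=\epsilon(P^{\,I-(r_1,r_2,\dots)})$, which equals $1$ precisely when $i_k=r_k$ for all $k\ge1$ and is $0$ otherwise. The main obstacle is producing this closed form for $\tilde f$ on the powers $u^{L\mathbf f_0}$; once it is in hand, both (a) and (b) fall out.
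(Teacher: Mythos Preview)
Your proof is correct and follows the same overall strategy as the paper---compute $f_\ast$ by lifting $P^I$ to a chain map on the Koszul-induced resolution and then pair against the cocycle representing $x$---but you make a genuine simplification. The paper writes down an explicit formula for the lift $\widetilde{P^I}$ on \emph{every} generator $u^S$, in terms of a combinatorial sum over ``majorization classes'' $\Maj(S,R)$, and verifies the chain-map condition by producing a somewhat intricate bijection of terms. You observe that for part~(b) the answer is forced to be $c\,v_0^s$ by bidegree, so one only needs the value of the lift on the single family $u^{L\mathbf f_0}$; on these generators the paper's majorization sum collapses (all balls carry $0$, so there is a unique majorization with index $J(R')$) to exactly your closed form $\sum_{|R'|=L}P^{I-J(R')}u^{R'}$. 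Your verification of the chain condition $d\tilde f(u^{L\mathbf f_0})=Q_0\tilde f(u^{(L-1)\mathbf f_0})$ via Lemma~\ref{lem:steenrod}(c) is then a direct reindexing rather than a bijection argument, and the extension to the remaining generators by acyclicity is standard. The trade-off is that the paper's full formula records more information about the lift (potentially useful for computing $f_\ast$ in degrees where $\deg(x)+\deg(P^I)>0$), whereas your argument is shorter and avoids introducing the majorization combinatorics entirely, which is all that is needed for this proposition.
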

    \begin{proof}
        The assertion (a) is clear by considering the topological degrees, since there is no element in $\mathbb F_p[v_0,v_1,\dots]$ with negative topological degree.
        
        Applying Definition \ref{def:resolution} to $E(\infty)$ and base-changing along $E(\infty)\to \mathcal A$, we obtain a free resolution $K^\bullet$ of $\mathcal A//E(\infty)$ as left $\mathcal A$-modules described as follows. It is freely generated by symbols $u^R$ where $R=(r_0,r_1,\dots)$ is a sequence of nonnegative integers. The differential is given by
        \[
            d(u^R)=\Sigma_{i=0}^\infty Q_i u^{R-\mathbf f_i},
        \]
        and the free module is made into a cochain complex of $\mathcal A$-modules by requiring that $u^R$ is in cochain level $\sum(-r_i)$, i.e. we have a resolution
        \[
            \cdots\to K^{-2}\to K^{-1}\to K^0\to\mathcal A//E(\infty)\to 0
        \]
        such that
        \begin{align*}
            K^0 &=\mathcal A u^{(0,0,\dots)}\\
            K^{-1}&= \mathcal A u^{(1,0,\dots)}\oplus\mathcal A u^{(0,1,\dots)}\oplus\cdots
        \end{align*}
        and so on.

        We shall construct a map of chain complexes of left $\mathcal A$-modules
        \[
            \widetilde{P^I}:K^\bullet\to K^\bullet
        \]
        that lifts the map of left $\mathcal A$-modules
        \[
            P^I:\mathcal A//E(\infty)\to \mathcal A//E(\infty)
        \]
        that sends $1$ to $P^I$. Given a sequence $S\in\Seq(L)$ of nonnegative integers with sum $L$, let us define $\widetilde{P^I}$ by the formula
        \[
            \widetilde{P^I}(u^S) = \sum_{R\in\Seq(L)} \,\sum_{\sigma\in\Maj(S,R)} P^{I-\ind(\sigma)} u^{R}.
        \]
        As usual, if $I-\ind(\sigma)$ contains negative entries, then the whole term should be considered as zero. Let us check that this map commutes with the differential. Applying the differential later, we have
        \[
            (d\circ \widetilde{P^I})(u^S) = \sum_{R\in\Seq(L)} \,\sum_{\sigma\in\Maj(S,R)}\,\sum_{i=0}^\infty P^{I-\ind(\sigma)}Q_i u^{R-\mathbf f_i}.
        \]
        On the other hand, we have
        \begin{align*}
            &(\widetilde{P^I}\circ d)(u^S)\\
            &=\widetilde{P^I}\left(\sum_{i=0}^\infty Q_i u^{S-\mathbf f_i}\right)\\
            &=\sum_{i=0}^\infty  \sum_{R\in\Seq(L-1)} \,\sum_{\sigma\in\Maj(S-\mathbf f_i,R)} Q_iP^{I-\ind(\sigma)}u^{R}\\
            &=
            \sum_{i=0}^\infty  \sum_{R\in\Seq(L-1)} \,\sum_{\sigma\in\Maj(S-\mathbf f_i,R)}\left(P^{I-\ind(\sigma)}Q_iu^{R}+\sum_{j=1}^\infty P^{I-\ind(\sigma)-p^i\mathbf e_j}Q_{i+j}u^{R}\right).
        \end{align*}

        We would like to find a bijection of the terms between the two sums. Equivalently, we need to find a bijection between the set of tuples
        \[
            \{(R,\sigma,i):R\in\Seq(L),\sigma\in\Maj(S,R),i\in\mathbb Z_{\geq 0}\}
        \]
        and
        \[
            \{(i,R,\sigma,j):i,j\in\mathbb Z_{\geq0},R\in\Seq(L-1),\sigma\in\Maj(S-\mathbf f_i,R)\}
        \]
        so that $P^{I-\ind(\sigma)}Q_iu^{R-\mathbf f_i}$ corresponds to $P^{I-\ind(\sigma)-p^i\mathbf e_j}Q_{i+j}u^R$ (or $P^{I-\ind(\sigma)}Q_iu^R$ if $j=0$). The bijection is given by sending $(R,\sigma,i)$ to $(\ell,R-\mathbf f_i,\sigma\backslash i, i-\ell)$, where $\ell$ is the integer on the ball that is contained in the rightmost box with $i$ written on it in the majorization class $\sigma$, and $\sigma\backslash i$ is a majorization class from $S-\mathbf f_\ell$ to $R-\mathbf f_i$ obtained by removing that rightmost box and the ball that is contained in it.
        
        The above bijection shows that $\widetilde{P^I}$ is a map of cochain complexes. We can also check that
        \[
            \deg(\widetilde{P^I}(u^R)) = \deg(P^I)+\deg(u^R)
        \]
        as expected.

        Let $R=(r_0,r_1,\dots)\in\Seq(L)$ be a sequence with sum $L$. The class
        \[
            x=v_0^{r_0}v_1^{r_1}\cdots\in\Ext_{\mathcal A}(\mathcal A//E(\infty),\mathbb F_p)
        \]
        can be described by the map
        \[
            K^{\bullet}\to \mathbb F_p[-L]
        \]
        that sends $u^{R}$ to $1$ and other $u^S$'s to $0$. Therefore, to compute $(P^I)_\ast (x)$, we need to find the image of $u^S$'s under the composition
        \[
            K^{\bullet}\xrightarrow{\widetilde{P^I}}K^{\bullet}\xrightarrow{x}\mathbb F_p[-L].
        \]
        In particular, we wish to find all sequences $S\in\Seq(L)$ and majorization classes $\sigma\in\Maj(S,R)$ such that $I=\ind(\sigma)$. However, since $\widetilde{P^I}$ shifts the topological degree by $\deg(P^I)$, we would also have
        \[
            \deg(P^I) + \deg(u^S) = \deg(P^{I-\ind(\sigma)})+\deg(u^R) = \deg(u^R).
        \]

        Now to prove (b), if we assume $\deg(x)+\deg(P^I)=0$, then since $\deg(u^R) = -\deg(x)$, we must have $\deg(u^S)=0$. This implies that $S=(L,0,0,\dots)$ since otherwise the degree of $u^S$ will be strictly negative. Also, if $S=(L,0,\dots)$, there is a unique majorization class $\sigma\in\Maj(S,R)$ since all balls look alike, and we can check that
        \[
            \ind(\sigma) = (r_1,r_2,\dots).
        \]
        Therefore, we have that $(P^I)_\ast(x)$ is $v_0^L$ if $I=(r_1,r_2,\dots)$ and zero otherwise.
    \end{proof}
    \subsection{Uniqueness of \textp-local \textbpn}
    In this section, we shall prove that $(\BP\langle n\rangle, \BP\langle m\rangle)$ is AP if $(p,m,n)$ satisfies the vanishing line hypothesis. Let us first solve the algebraic version of the problem.
    \begin{proposition}\label{prop:algAP}
        Let $n\geq m$ be any nonnegative integers. Then, for any integers $D,k\in\mathbb Z$, the map
        \begin{align*}
            &\left(\bigoplus_{s=0}^\infty \Ext^{s,s+D}_{\mathcal A}(\mathcal A//E(m),\mathcal A//E(n))\right)[v_0^{-1}]\\
            & \to\prod_{c\leq k} \Hom_{\mathbb F_p[v_0^{\pm}]}((\mathbb F_p[v_0^{\pm},v_1,\dots,v_n])_c,(\mathbb F_p[v_0^{\pm},v_1,\dots,v_n])_{c+D})
        \end{align*}
        given by $f\mapsto f_\ast$ is surjective. Here, we grade $\mathbb F_p[v_0^{\pm},v_1,\dots,v_n],\mathbb F_p[v_0^{\pm},v_1,\dots,v_m]$ by topological degrees and $(-)_c$ denotes the $c$-th graded piece.
    \end{proposition}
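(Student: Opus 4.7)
The plan is to reduce surjectivity to realizing ``atomic'' maps that send one source basis monomial $v_1^{r_1}\cdots v_n^{r_n}$ to one target monomial $v_0^b v_1^{s_1}\cdots v_m^{s_m}$ and kill the other basis elements of the same topological degree. Since any $\mathbb F_p[v_0^{\pm}]$-linear $\phi_c$ is determined by its values on the finite $\mathbb F_p$-basis $\{v_1^{r_1}\cdots v_n^{r_n}\}$ of degree-$c$ monomials, and each value is a finite $\mathbb F_p[v_0^{\pm}]$-linear combination of monomials in $v_1,\ldots,v_m$, the full problem splits into atomic ones by linearity.

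For a fixed atomic target I first work in the easier case $m=n=\infty$, where the Koszul differential on $(\mathcal A//E(\infty))[v_0,v_1,\ldots]$ vanishes and every element is automatically a cycle, and take the class
\[
g = P^{(r_1,\ldots,r_n)}\, v_0^{b - r_1 - \cdots - r_n}\, v_1^{s_1}\cdots v_m^{s_m}.
\]
By Proposition~\ref{prop:Extaction}(b) combined with the $\mathbb F_p[v_0,\ldots,v_m]$-linearity of the Yoneda action, $g_\ast$ sends $v_1^{r_1}\cdots v_n^{r_n}$ to $v_0^b v_1^{s_1}\cdots v_m^{s_m}$ and kills every other monomial of the same topological degree.

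The delicate point is that $g_\ast$ can still act nontrivially on monomials of strictly higher topological degree and thus pollute $\phi_{c'}$ for $c' > c$. I handle this by processing the finitely many degrees $c \leq k$ in increasing order: at step $c$ I take a sum of atomic classes realizing the current value of $\phi_c$, compute the unwanted contributions to each $\phi_{c'}$ with $c < c' \leq k$ from the combinatorial majorization formula underlying the proof of Proposition~\ref{prop:Extaction}, and subtract them from the yet-to-be-handled target. Contributions to $c' > k$ are ignored. After finitely many steps this yields a finite sum $g \in (\mathcal A//E(\infty))[v_0^{\pm},v_1,\ldots,v_m]$ realizing $\phi$ on all of $c \leq k$.

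Finally, I transfer $g$ to $\Ext_{\mathcal A}(\mathcal A//E(m),\mathcal A//E(n))[v_0^{-1}]$ through the zig-zag $\Ext_{\mathcal A}(\mathcal A//E(m),\mathcal A//E(n)) \xrightarrow{\eta_1} \Ext_{\mathcal A}(\mathcal A//E(m),\mathcal A//E(\infty)) \xleftarrow{\eta_2} \Ext_{\mathcal A}(\mathcal A//E(\infty),\mathcal A//E(\infty))$ established before Proposition~\ref{prop:Extaction}, where $f_\ast$ is determined by $(\eta_2 g)_\ast$. Under the Koszul description $\eta_1$ is the projection to the $P$-part, so lifting $g$ through $\eta_1$ amounts to correcting $g$ by terms involving $Q_{n+1},Q_{n+2},\ldots$ so that $d = \sum_{i=0}^m v_i Q_i$ vanishes on the corrected element. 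This is the hardest step: it reduces to showing $\Ext_{E(m)}(\mathbb F_p,K)[v_0^{-1}] = 0$, where $K = \ker(\mathcal A//E(n) \to \mathcal A//E(\infty))$ is the $E(m)$-submodule spanned by $P^I Q^E$ with $E \neq 0$. I expect to prove this vanishing by an inductive $Q_0$-Bockstein argument that repeatedly invokes the exactness of Proposition~\ref{prop:exact}, filtering $K$ by the number of $Q_{n+j}$-factors and exploiting the $v_0$-inversion to successively kill the graded pieces.
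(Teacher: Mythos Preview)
Your overall architecture is close to the paper's: the ``atomic'' decomposition together with the increasing-degree correction is exactly the paper's observation that the classes $v_1^{j_1}\cdots v_m^{j_m}\phi^{(i_1,\dots,i_n)}$ act by a lower-triangular matrix on the monomial basis, and working at $m=n=\infty$ first is how the paper computes $(\phi^I)_\ast$ via Proposition~\ref{prop:Extaction}. The substantive difference is that the paper never lifts through $\eta_1$ abstractly; it builds the cycles $\phi^I=v_0^NP^I+v_0^{N-1}x_{N-1}+\cdots$ directly in $(\mathcal A//E(n))[v_0,\dots,v_m]$ by an explicit $Q_0$-correction procedure, and then observes a posteriori that $\eta_1(\phi^I)=v_0^NP^I$.

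The genuine gap is in your final step. Your proposed proof that $\Ext_{E(m)}(\mathbb F_p,K)[v_0^{-1}]=0$ via the filtration of $K$ by the number of $Q_{n+j}$-factors does not work as stated: since each $Q_k$ strictly increases that count (Lemma~\ref{lem:formula}), the $E(m)$-action on every graded piece is \emph{trivial}, so the associated graded has $\Ext_{E(m)}(\mathbb F_p,\mathrm{gr}\,K)=(\mathrm{gr}\,K)[v_0,\dots,v_m]$, which is $v_0$-torsion-free rather than $v_0$-torsion. Nothing is ``killed'' on the $E_1$-page, and the spectral sequence simply reassembles the Ext group you started with. Moreover, Proposition~\ref{prop:exact} only gives $Q_0$-exactness on \emph{odd}-degree elements of $\mathcal A//E(n)$, so it does not directly control even-degree classes in $K$. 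What actually makes the lifting work is a \emph{termination} argument: the paper's inductive construction $Q_0x_{i}=d^1x_{i+1}$ must stop, and this is forced by the weight and degree bounds of Lemma~\ref{lem:maxdeg} (each correction step pushes the topological degree of the remaining terms in $\mathcal A//E(n)$ down by at least $2p-2$, while the weight is bounded). Your sketch omits this finiteness input entirely; without it the $Q_0$-Bockstein zig-zag is just a formal procedure with no reason to converge, even after inverting $v_0$. In short, you have correctly located where the difficulty lies, but the mechanism you propose to resolve it is the wrong one, and the missing ingredient is precisely the degree/weight estimate that makes the paper's direct construction of $\phi^I$ terminate.
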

    \begin{proof}
        Let $I=(i_1,i_2,\dots, i_n)$ be any sequence of nonnegative integers of length $n$. We shall prove that there is a class
        \[
            \phi^I\in\Ext_{\mathcal A}(\mathcal A//E(m),\mathcal A//E(n))
        \]
        that can be represented by a cycle of the form
        \[
            v_0^N P^I + v_0^{N-1}x_{N-1}+\cdots+x_0\in(\mathcal A//E(n))[v_0,\dots,v_m]
        \]
        for some $x_0,\dots,x_{N-1}\in(\mathcal A//E(n))[v_1,\dots,v_m]$ and sufficiently large $N$. Recall that the differential is
        \[
            d = v_0Q_0+\cdots+v_mQ_m = v_0Q_0 + d^1.
        \]
        Since $Q_0P^I=0$, we have $Q_0d^1P^I=0$, so that by Proposition \ref{prop:exact}, there is some $x_{N-1}$ such that $Q_0x_{N-1} = d^1P^I$. Then, we also have
        \[
        Q_0d^1 x_{N-1} = -d^1Q_0 x_{N-1}  = -d^1 d^1P^I =0 
        \]
        so that we can define $x_{N-2}$ such that $Q_0x_{N-2} =d^1x_{N-1}$. Inductively, if we define $x_i$ by
        \[
            Q_0x_i = d^1  x_{i+1}
        \]
        then we have
        \[
            Q_0 d^1x_i = -d^1Q_0x_i = -d^1d^1x_{i+1}=0
        \]
        and we can continue.

        We claim that at some point, we get $x_i=0$, given that we choose sufficiently large $N$. Note that by construction, $x_i$ has Adams degree $N-i$ and has topological degree $\deg(P^I)$. If
        \[
            v_1^{r_1}\cdots v_m^{r_m}x\in(\mathcal A//E(n))[v_0,\dots,v_m]
        \]
        is an element of Adams degree $N-i$ in the expansion of $x_i$ into monomials, then we have
        \[
            \deg(v_1^{r_1}\cdots v_m^{r_m})\geq (2p-2)(N-i)
        \]
        so that
        \[
            \deg(x) \leq \deg(P^I) - (2p-2)(N-i)
        \]
        where $\deg$ is the topological degree. If $w$ is the weight of $P^I$, then by Lemma \ref{lem:maxdeg}, we have
        \[
            -w+1 \leq \deg(x)\leq \deg(P^I)-(2p-2)(N-i).
        \]
        Therefore, if we start with a sufficiently large $N$, then there is some $i$ such that the above inequality cannot hold, which means that we must have $x_i=0$.

        If we have $x_i=0$, then we have
        \begin{align*}
            &d(v_0^NP^I+v_0^{N-1}x_{N-1}+\cdots+v_0^{i+1}x_{i+1})\\
            &= v_0^{N+1}Q_0P^I + (v_0^Nd^1P^I + v_0^N Q_0 x_{N-1}) + (v_0^{N-1}d^1x_{N-1}+v_0^{N-1}Q_0x_{N-2})+\cdots = 0
        \end{align*}
        and we can name this class $\phi^I$.

        Next, let us analyze the map
        \[
            (\phi^I)_\ast:\mathbb F_p[v_0,\dots,v_n]\to\mathbb F_p[v_0,\dots,v_m].
        \]
        By the discussion at the beginning of Section \ref{sec:uniqueplocal}, it is enough to consider the case $m=n=\infty$. Suppose that
        \[
            x = v_1^{r_1}v_2^{r_2}\cdots
        \]
        is a monomial such that $\deg(x)+\deg(P^I)\leq 0$. If
        \[
            v_0^{s_0}v_1^{s_1}\cdots P^J \in(\mathcal A//E(\infty))[v_0,v_1,\dots]
        \]
        is a monomial in the expansion of
        \[\phi^I = v_0^NP^I+\cdots.
        \]
        Then, unless $(s_0,s_1,\dots)=(N,0,\dots)$, we have
        \[
            \deg(P^I) = \deg(v_0^{s_0}v_1^{s_1}\cdots) + \deg(P^J) - \deg(v_0^N)> \deg(P^J)
        \]
        so that $\deg(x)+\deg(P^J)<0$, which implies that $(P^J)_\ast (x) =0$ by Proposition \ref{prop:Extaction}. Therefore, we have
        \[
            (\phi^I)_\ast(x) = v_0^N(P^I)_\ast(x)
        \]
        and by Proposition \ref{prop:Extaction}, we have
        \[
            (\phi^I)_\ast(x) = \begin{cases}
                v_0^{N+i_1+i_2+\cdots}&\text{if }x=v_1^{i_1}v_2^{i_2}\cdots\\
                0&\text{otherwise}
            \end{cases}
        \]
        assuming $\deg(x)+\deg(P^I)\leq 0$.
		
		Suppose that $D$ and $k$ are fixed integers as in the statement of the Proposition. For each integer $c\in\mathbb Z_{\geq0}$, let $M_c$ be the collection of all sequences $(i_1,\dots,i_n,j_1,\dots,j_m)$ of nonnegative integers of length $n+m$ such that
		\[
			\deg(v_1^{i_1}\cdots v_n^{i_n}) = \deg(v_1^{j_1}\cdots v_m^{j_m}) - D = c.
		\]
		Let us give a lexicographical ordering on $M_d$ and let us order
		\[
			M = \bigsqcup_{c=0}^D M_c
		\]
		by declaring that elements of $M_i$ are smaller than elements of $M_j$ for $i<j$.
		
		There is a natural basis of
		\[
			\prod_{c\leq k} \Hom_{\mathbb F_p[v_0^{\pm}]}((\mathbb F_p[v_0^{\pm},v_1,\dots,v_n])_c,(\mathbb F_p[v_0^{\pm},v_1,\dots,v_n])_{c+D})
		\]
		indexed by $M$.  Namely, for each $(i_1,\dots,i_n,j_1,\dots,j_m)$, we take the $\mathbb F_p[v_0^{\pm}]$-linear map sending $v_1^{i_1}\cdots v_n^{i_m}$ to $v_1^{j_1}\cdots v_m^{j_m}$ and other monomials to zero.
		
		We can also define an $M$-indexed subset of
		\[
			\left(\bigoplus_{s=0}^\infty \Ext^{s,s+D}_{\mathcal A}(\mathcal A//E(m),\mathcal A//E(n))\right)[v_0^{-1}],
		\]
		by taking the class represented by the cycle
		\[
		v_1^{j_1}\cdots v_m^{j_m}\phi^{(i_1,\dots,i_n)}\in\mathcal (A//E(n))[v_0,\dots,v_m]
		\]
		for each $(i_1,\dots,i_n,j_1,\dots,j_m)\in M$. Then, by the previous discussion on $(\phi^I)_\ast$, the representation of the image of this set by the basis in the previous paragraph is a lower triangular matrix where diagonal entries are powers of $v_0$. Therefore, the matrix is nonsingular over $\mathbb F_p[v_0^{\pm}]$, and we have the desired surjectivity.
    \end{proof}
    \begin{theorem}\label{thm:BPnAP}
        Let $n\geq m$ be nonnegative integers such that $(m,n)$ satisfies the vanishing line hypothesis (resp. weak vanishing line hypothesis). Then, the pair $(\BP\langle n\rangle, \BP\langle m\rangle)$ is AP (resp. preAP).
    \end{theorem}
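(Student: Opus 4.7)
The strategy combines the Adams spectral sequence
\[
E_2^{s,t}=\Ext^{s,t}_{\mathcal A}(\mathcal A//E(m),\mathcal A//E(n))\Rightarrow \pi_{t-s}F(\BP\langle n\rangle_p^\wedge,\BP\langle m\rangle_p^\wedge)
\]
with the algebraic surjectivity of Proposition \ref{prop:algAP} and the vanishing of differentials forced by the vanishing line hypothesis. The class $v_0\in E_2^{1,1}$ detects multiplication by $p$ on the abutment, so rationalization of the target corresponds to $v_0$-inversion on $E_\infty$. Since $\BP\langle m\rangle_p^\wedge\otimes\mathbb Q$ is a $\mathbb Q_p$-module spectrum and $\BP\langle n\rangle_p^\wedge$ is of finite type, we have the identification
\[
[\BP\langle n\rangle_p^\wedge,\Sigma^j\BP\langle m\rangle_p^\wedge]\otimes_{\mathbb Z}\mathbb Q\;\cong\;\prod_{c}\Hom_{\mathbb Q_p}\bigl(\pi_c(\BP\langle n\rangle_p^\wedge)\otimes_{\mathbb Z}\mathbb Q,\pi_{c-j}(\BP\langle m\rangle_p^\wedge)\otimes_{\mathbb Z}\mathbb Q\bigr),
\]
and the map whose surjectivity defines preAP for $(\BP\langle n\rangle,\Sigma^j\BP\langle m\rangle)$ is the projection onto the factors with $c\leq k$.

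Fix $j,k\in\mathbb Z$ and write $D=-j$. The plan is to show that each of the Ext classes $v_1^{j_1}\cdots v_m^{j_m}\phi^I$ constructed in the proof of Proposition \ref{prop:algAP} is a permanent cycle in the Adams spectral sequence, provided the Adams filtration $N$ used in the construction of $\phi^I$ is taken sufficiently large. The key observation is a parity argument: every such class $\chi$ has even topological degree, since $\deg(P^I)$ and each $\deg(v_l)$ is divisible by $2(p-1)$. Consequently, any nonzero differential $d_r(\chi)$ must land in odd topological degree $D-1$ at Adams filtration $s+r$, where $s$ denotes the Adams filtration of $\chi$. By the vanishing line hypothesis (Definition \ref{def:vanish}), this target vanishes whenever $a(s+r)+(D-1)>1-2p^{n+1}$. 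Only finitely many pairs $(I,j_1,\dots,j_m)$ contribute to the truncated target in degrees $\leq k$, so I may choose a single $N$ large enough to enforce this inequality for every relevant $\chi$ and all $r\geq 2$ simultaneously. In the weak vanishing line case $a=0$ the inequality reduces to $D>2-2p^{n+1}$; this is satisfied for the value $D=0$ corresponding to preAP, but fails for $D\ll 0$, which is precisely why the weak hypothesis yields preAP and not AP.

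Once these classes are known to survive to $E_\infty$, strong convergence of the spectral sequence (Remark \ref{rmk:conv}) lifts each of them to an element of $[\BP\langle n\rangle_p^\wedge,\Sigma^j\BP\langle m\rangle_p^\wedge]$. Combining the formula $(\phi^I)_\ast(v_1^{i_1}\cdots v_n^{i_n})=v_0^{N+i_1+\cdots+i_n}$ from Proposition \ref{prop:Extaction} with the $\mathbb F_p[v_0^\pm]$-linearity of multiplication by $v_1^{j_1}\cdots v_m^{j_m}$, the rationalizations of these lifts reproduce the lower-triangular family of maps assembled at the end of the proof of Proposition \ref{prop:algAP} and therefore span the truncated Hom target. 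This establishes preAP for $(\BP\langle n\rangle,\Sigma^j\BP\langle m\rangle)$; letting $j$ range over $\mathbb Z$ gives AP.

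The main obstacle is the technical dictionary between the $v_0$-inverted $E_\infty$-page and the rationalized abutment, which rests on $v_0$ detecting $p$ multiplicatively on the spectral sequence and on the finiteness of each Ext bidegree (so the spectral sequence genuinely reads off rational homotopy). Once this dictionary is set up, the remainder of the argument is forced by the parity observation together with the vanishing line hypothesis.
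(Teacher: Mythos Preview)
Your approach is essentially the same as the paper's: use Proposition~\ref{prop:algAP} to produce Ext classes, invoke parity plus the vanishing line hypothesis to see that (after multiplying by a large power of $v_0$) they are permanent cycles, lift via strong convergence, and then check that the lifts span the truncated rational target. The paper handles your ``technical dictionary'' not by literally identifying rationalization with $v_0$-inversion (your displayed identification of $[\BP\langle n\rangle_p^\wedge,\Sigma^j\BP\langle m\rangle_p^\wedge]\otimes\mathbb Q$ with the full infinite product is not quite right, though harmless since only the truncated target matters) but by observing that the Adams spectral sequences for $\pi_\ast\BP\langle n\rangle$ and $\pi_\ast\BP\langle m\rangle$ degenerate, so $\pi_\ast(\widetilde{\phi_i})$ induces $(\phi_i)_\ast$ on associated graded, and then deducing $\mathbb Z_p$-linear independence of the $\widetilde{\phi_i}$ from the $\mathbb F_p$-linear independence of the $(\phi_i)_\ast$.
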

    \begin{proof}
        Let us first assume that $(m,n)$ satisfies the vanishing line hypothesis. We wish to show that $(\BP\langle n\rangle, \Sigma^{-D}\BP\langle m\rangle)$ is preAP for all $D\in\mathbb Z$. If $D$ is an odd number, then there is nothing to prove since
        \[
        	[\BP\langle n\rangle_p^\wedge,\tau_{\leq k}\Sigma^{-D}\BP\langle m\rangle_p^{\wedge}]\otimes_{\mathbb Z}\mathbb Q=0
        \]
        for all $k$. Therefore, let us assume that $D$ is even.
        
        Let $k$ be a fixed integer. We wish to show that
        \[
            [\BP\langle n\rangle_p^\wedge,\Sigma^{-D}\BP\langle m\rangle_p^\wedge]
            \to \prod_{c\leq k}\Hom_{\mathbb Q_p}(\pi_c(\BP\langle n\rangle_p^\wedge)\otimes_{\mathbb Z}\mathbb Q,\pi_{c+D}(\BP\langle m\rangle_p^\wedge)\otimes_{\mathbb Z}\mathbb Q)
        \]
        is surjective.
        Using Proposition \ref{prop:algAP}, let us choose some classes
        \[
        	\phi_1,\phi_2,\dots\in \bigoplus_{s=0}^\infty \Ext^{s,s+D}_{\mathcal A}(\mathcal A//E(m),\mathcal A//E(n))
        \]
        such that their images in
        \[
        	\prod_{c\leq k} \Hom_{\mathbb F_p[v_0^{\pm}]}((\mathbb F_p[v_0^{\pm},v_1,\dots,v_n])_c,(\mathbb F_p[v_0^{\pm},v_1,\dots,v_n])_{c+D})
        \]
        form a basis.
        
        First, we shall prove that $\phi_i$ is a permanent cycle in the Adams spectral sequence
        \[
        	E_2^{s,t} = \Ext_{\mathcal A}^{s,t}(\mathcal A//E(m),\mathcal A//E(n))\Rightarrow [\Sigma^{t-s}\BP\langle n\rangle_p^\wedge,\BP\langle m\rangle_p^\wedge]
        \]
        for each $i$, possibly after multiplying it by a power of $v_0$. Suppose that 
       	\[
       		\phi_i\in \Ext_{\mathcal A}^{s,s+D}(\mathcal A//E(m),\mathcal A//E(n)).
       	\]
       	Then, $v_0^N\phi_i\in\Ext^{s+N,s+N+D}$, so a target of a differential from $v_0^N\phi_i$ would have to be detected by $\Ext^{s+N+r, s+N+D+r-1}$ in the $E_2$-page for some $r\geq 2$. Since $D-1$ is odd, this group is zero if
       	\[
       		a(s+N+r) + D-1 > 1-2p^{n+1}
       	\]
       	by the assumption that $(p,m,n)$ satisfies the vanishing line hypothesis. If $N$ is sufficiently large, this inequality holds for all $r\geq2$, so that $v_0^N\phi_i$ is a permanent cycle.
       	
       	Therefore, by multiplying powers of $v_0$ if necessary, we may assume that $\phi_i$ is a permanent cycle in the Adams spectral sequence for all $i$. Also, since $\{\phi_1,\phi_2,\dots\}$ is a finite set, by multiplying powers of $v_0$ if necessary, we may assume that the Adams degree of $\phi_i$ is $s$ for all $i$.
       	
	    Since $\phi_i$ is a permanent cycle in the Adams spectral sequence, it lifts to a map
       	\[
       	\widetilde{\phi_i}:\BP\langle n\rangle_p^\wedge\to\Sigma^{-D}\BP\langle m\rangle_p^\wedge
       	\]
       	detecting it. Also, since the Adams spectral sequence for $\pi_\ast(\BP\langle n\rangle_p^\wedge)$ and $\pi_\ast(\BP\langle n\rangle_p^\wedge)$ degenerates, the map
       	\[
       		\pi_\ast(\widetilde{\phi_i}):\mathbb Z_p[v_1,\dots,v_n]\to\Sigma^{-D}\mathbb Z_p[v_1,\dots,v_n]
       	\]
       	induces
        \[
            (\phi_i)_\ast:\mathbb F_p[v_0,\dots,v_n]\to \Sigma^{-D}\mathbb F_p[v_0,\dots,v_m]
        \]
        by taking the associated graded of the Adams filtration.

        Since the $\mathbb F_p[v_0^\pm]$-dimension of
        \[
\prod_{c\leq k} \Hom_{\mathbb F_p[v_0^{\pm}]}((\mathbb F_p[v_0^{\pm},v_1,\dots,v_n])_c,(\mathbb F_p[v_0^{\pm},v_1,\dots,v_n])_{c+D})
        \]
        is equal to the $\mathbb Q_p$-dimension of
        \[
            \prod_{c\leq k}\Hom_{\mathbb Q_p}(\pi_c(\BP\langle n\rangle_p^\wedge)\otimes_{\mathbb Z}\mathbb Q,\pi_{c+D}(\BP\langle m\rangle_p^\wedge)\otimes_{\mathbb Z}\mathbb Q),
        \]
        it is enough to see that the image of $\{\widetilde{\phi_1},\widetilde{\phi_2},\dots\}$ in this $\mathbb Q_p$-vector space is linearly independent, which is equivalent to saying that it is $\mathbb Z_p$-linearly independent in
        \[
            \prod_{c\leq k}\Hom_{\mathbb Z_p}(\pi_c(\BP\langle n\rangle_p^\wedge),\pi_{c+D}(\BP\langle m\rangle_p^\wedge)).
        \]
        If we assume that there is a relation $\sum \alpha_i\widetilde{\phi_i}=0$ in the above $\mathbb Z_p$-module, then since it is a free module, we may divide by powers of $p$ if necessary and assume that at least one $\alpha_i$ is a $p$-adic unit. Then, we obtain a contradiction by passing to the associated graded with respect to the Adams filtration, since $\{(\phi_1)_\ast,(\phi_2)_\ast,\dots\}$ is linearly independent over $\mathbb F_p$.

        If $(m,n)$ satisfies the weak vanishing line hypothesis, we can modify the proof above to only care about the case $D=0$. In this case, the inequality that we have used
        \[
            a(s+N+r)+D-1 > 1-2p^{n+1}
        \]
        is always true with $a=0$ and $D=0$. Following the same proof, we obtain that $(\BP\langle n\rangle,\BP\langle m\rangle)$ is preAP.
    \end{proof}
    \begin{corollary}\label{cor:tmfpr}
        Let $X$ be a $p$-local spectrum of finite type. If $X_p^\wedge\simeq \tmf_p^\wedge$, then $X\simeq \tmf_{(p)}$.
    \end{corollary}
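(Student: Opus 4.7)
The plan is to reduce the corollary to the case $Y = \tmf_{(p)}$ of Theorem \ref{thm:intro3} by checking that the endomorphism surjectivity hypothesis holds for $\tmf_{(p)}$ at every prime. The key point is that all the ingredients have been set up earlier in the paper, and the proof is a direct assembly.

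First, I would note that by Theorem \ref{thm:vanish}, the pair $(2,2)$ satisfies the vanishing line hypothesis at every prime $p$, since the case $n\leq 2$ is covered unconditionally. Applying Theorem \ref{thm:BPnAP} with $m=n=2$, this means $(\BP\langle 2\rangle,\BP\langle 2\rangle)$ is an AP pair at every prime. By Proposition \ref{prop:tmfBP2}, this is equivalent to $(\tmf_{(p)},\tmf_{(p)})$ being AP, and in particular preAP.

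Next, I would translate the preAP property of $(\tmf_{(p)}, \tmf_{(p)})$ into the hypothesis on $Y$ in Theorem \ref{thm:intro3}. Using the equivalent $p$-local formulation from the lemma at the start of the subsection on AP pairs, preAP for $(\tmf_{(p)},\tmf_{(p)})$ says exactly that
\[
[\tmf_{(p)},\tmf_{(p)}]\otimes_{\mathbb Z}\mathbb Q \to [\tmf_{(p)},\tau_{\leq k}\tmf_{(p)}]\otimes_{\mathbb Z}\mathbb Q
\]
is surjective for all $k$, and the right-hand side is identified with $\prod_{i\leq k}\Hom_{\mathbb Q}(\pi_i(\tmf_{(p)})\otimes\mathbb Q, \pi_i(\tmf_{(p)})\otimes\mathbb Q)$. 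This is precisely the assumption on $Y$ in Theorem \ref{thm:intro3}.

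Finally, given any equivalence $f:X_p^\wedge\simeq \tmf_p^\wedge$, since $X$ is $p$-local of finite type and $\tmf_{(p)}$ satisfies the endomorphism condition, Theorem \ref{thm:intro3} produces an equivalence $X\simeq \tmf_{(p)}$ lifting $f/p$. There is no real obstacle here: the entire content has been packaged into Theorems \ref{thm:intro3}, \ref{thm:vanish}, \ref{thm:BPnAP}, and Proposition \ref{prop:tmfBP2}, and the corollary is just a matter of chaining these together. The only care needed is to invoke Theorem \ref{thm:vanish} in the form that covers $n=m=2$ at $p=2$ and $p=3$, which is exactly where the $n\leq 2$ clause was designed to apply.
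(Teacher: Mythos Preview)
Your proposal is correct and follows the same approach as the paper, which simply says to combine Theorem \ref{thm:BPnAP} and Proposition \ref{prop:tmfBP2}; you have spelled out the chain of implications (Theorem \ref{thm:vanish} $\Rightarrow$ Theorem \ref{thm:BPnAP} $\Rightarrow$ Proposition \ref{prop:tmfBP2} $\Rightarrow$ Proposition \ref{prop:AParg}/Theorem \ref{thm:intro3}) in more detail than the paper does.
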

    \begin{proof}
        Combine Theorem \ref{thm:BPnAP} and Proposition \ref{prop:tmfBP2}.
    \end{proof}
    \begin{corollary}\label{cor:uniqueplocal}
        Suppose either that $p$ is odd or that $p=2$ and $n\leq 2$. Then, the $\mathbb F_p$-cohomology of $\BP\langle n\rangle$ as an $\mathcal A$-module determines the spectrum $\BP\langle n\rangle$.

        More generally, suppose that $Y$ is a direct sum of even suspensions of $\BP\langle n\rangle$,
        \[
            Y=\Sigma^{2i_1}\BP\langle n\rangle\oplus\cdots\oplus\Sigma^{2i_k}\BP\langle n\rangle
        \]
        such that $0\leq i_1\leq \cdots\leq i_k<p^{n+1}-1$. Then, if $X$ is a $p$-local spectrum of finite type, any isomorphism
        \[
            H^\ast(X;\mathbb F_p)\simeq H^\ast(Y;\mathbb F_p)
        \]
        of $\mathcal A$-modules can be lifted to an equivalence $X\simeq Y$.
    \end{corollary}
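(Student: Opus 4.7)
The first assertion is the case $k=1$, $i_1=0$ of the second, so it is enough to address the statement about $Y$. The plan is to split the problem into two stages: first lift the given $\mathcal A$-module isomorphism $H^\ast(X;\mathbb F_p)\simeq H^\ast(Y;\mathbb F_p)$ to a $p$-complete equivalence $f\colon X_p^\wedge\simeq Y_p^\wedge$ using the vanishing line hypothesis, and then promote $f$ to a $p$-local equivalence using Proposition \ref{prop:AParg}.

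For the first stage, I would run the Adams spectral sequence
\[
    E_2^{s,t}=\Ext_{\mathcal A}^{s,t}(H^\ast(Y;\mathbb F_p),H^\ast(X;\mathbb F_p))\Rightarrow [\Sigma^{t-s}X_p^\wedge,Y_p^\wedge]
\]
and interpret the given cohomology isomorphism as a class $\phi\in E_2^{0,0}$. If $\phi$ is a permanent cycle it lifts to a map $X_p^\wedge\to Y_p^\wedge$ that is an $\mathbb F_p$-cohomology isomorphism, hence an equivalence of $p$-complete spectra of finite type. The possible targets for differentials on $\phi$ live in $E_r^{r,r-1}$ for $r\geq 2$, i.e.\ at total degree $t-s=-1$. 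Decomposing $H^\ast(Y;\mathbb F_p)$ as $\bigoplus_j \Sigma^{2i_j}H^\ast(\BP\langle n\rangle;\mathbb F_p)$, these targets split as direct sums of groups of the form
\[
    \Ext_{\mathcal A}^{s,\,s-1+2i_j-2i_l}(H^\ast(\BP\langle n\rangle;\mathbb F_p),H^\ast(\BP\langle n\rangle;\mathbb F_p)),
\]
each of which is controlled by Theorem \ref{thm:intro6}. The corresponding odd topological degree is $-1+2(i_j-i_l)$, and the vanishing line condition $\epsilon s + (t-s) > 1-2p^{n+1}$ becomes $\epsilon s > 2+2(i_l-i_j)-2p^{n+1}$. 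Under the hypothesis $i_l<p^{n+1}-1$ the right-hand side is strictly negative, so the inequality holds for every $s\geq 0$ and every pair $(j,l)$. Hence the targets vanish, $\phi$ is a permanent cycle, and $f\colon X_p^\wedge\simeq Y_p^\wedge$ exists.

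For the second stage, I would observe that the preAP condition only sees the $p$-completions of its arguments, so $(X,Y)$ is preAP as soon as $(Y,Y)$ is. Since preAP in either variable is preserved by finite direct sums and AP is invariant under arbitrary suspensions of either variable, establishing $(Y,Y)$ preAP reduces to establishing AP of the single pair $(\BP\langle n\rangle,\BP\langle n\rangle)$. Under the hypotheses on $p$ and $n$, Theorem \ref{thm:vanish} guarantees that $(n,n)$ satisfies the vanishing line hypothesis, and Theorem \ref{thm:BPnAP} then delivers AP. Proposition \ref{prop:AParg} applied to $(X,Y)$ and the $p$-complete equivalence $f$ from the first stage produces the required $p$-local equivalence $X\simeq Y$.

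The main technical point is the bookkeeping in the first stage: one needs to verify that the suspension shifts arising from decomposing $Y$ never push a relevant Ext class out of the vanishing region of Theorem \ref{thm:intro6}, and this is precisely what the upper bound $i_k<p^{n+1}-1$ is designed to guarantee via the estimate $2+2(i_l-i_j)-2p^{n+1}\leq -2<0$.
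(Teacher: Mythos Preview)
Your argument is correct and matches the paper's proof in both structure and content: first lift the cohomology isomorphism to a $p$-complete equivalence via the vanishing of the $t-s=-1$ column (your inequality $2+2(i_l-i_j)-2p^{n+1}\le -2$ is exactly the paper's check $-1-2(i_k-i_1)>1-2p^{n+1}$), then invoke Theorem~\ref{thm:BPnAP} and Proposition~\ref{prop:AParg} to descend to a $p$-local equivalence. You make explicit two points the paper leaves implicit---that preAP depends only on the $p$-completions (so $(X,Y)$ preAP follows from $(Y,Y)$ preAP once $X_p^\wedge\simeq Y_p^\wedge$) and that $(Y,Y)$ preAP reduces via direct sums and suspensions to $(\BP\langle n\rangle,\BP\langle n\rangle)$ being AP---but these are straightforward and were clearly the intended route.
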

    \begin{proof}
        By Theorem \ref{thm:vanish}, the $E_2$-page of the Adams spectral sequence
        \[
            E_2^{s,t}=\Ext_{\mathcal A}^{s,t}(H^\ast(Y;\mathbb F_p),H^\ast(X;\mathbb F_p))\Rightarrow [\Sigma^{t-s}X,Y_p^\wedge]
        \]
        vanishes for $t-s=-1$, since $-1- 2(i_k-i_0) > 1-2p^{n+1}$. Therefore, we obtain an equivalence $X_p^\wedge\simeq Y_p^\wedge$ lifting the isomorphism of cohomology.

        By Proposition \ref{prop:AParg} and Theorem \ref{thm:BPnAP}, we have an equivalence $X\simeq Y$, which lifts $X_p^\wedge/p\simeq Y_p^\wedge/p$ and hence also the isomoprhism of cohomology.
    \end{proof}
    \begin{remark}
        When $(p,n)=3$, we see that Theorems \ref{thm:intro1} and \ref{thm:introplocal} hold by the same argument since $(\BP\langle 3\rangle,\BP\langle3\rangle)$ is preAP.
    \end{remark}
    \begin{corollary}\label{cor:inj}
        Let $p$ be an odd prime and let $n\geq m$ be nonnegative integers. Then, for any integer $k$, the natural map
        \[
            [\BP\langle n\rangle,\Sigma^k\BP\langle m\rangle]\to [\BP\langle n\rangle_p^\wedge,\Sigma^k\BP\langle m\rangle_p^\wedge]
        \]
        is injective. The same is true when $p=2$ if either $n>m$ or $n=m\leq 2$.
    \end{corollary}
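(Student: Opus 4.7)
The proof is a direct assembly of machinery already in place. First, since $\Sigma^k\BP\langle m\rangle_p^\wedge$ is $p$-complete, any map from $\BP\langle n\rangle$ into it factors uniquely through the $p$-completion of the source, so
\[
[\BP\langle n\rangle_p^\wedge,\Sigma^k\BP\langle m\rangle_p^\wedge] = [\BP\langle n\rangle,\Sigma^k\BP\langle m\rangle_p^\wedge],
\]
and the map in the statement is identified with the natural $p$-completion map
\[
[\BP\langle n\rangle,\Sigma^k\BP\langle m\rangle]\to [\BP\langle n\rangle,\Sigma^k\BP\langle m\rangle_p^\wedge].
\]

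Next, I would apply Proposition \ref{prop:fracture} to the pair $(X,Y) = (\BP\langle n\rangle,\Sigma^{k-1}\BP\langle m\rangle)$, which asserts precisely that if $(X,Y)$ is preAP, then $[X,\Sigma Y]\to[X,\Sigma Y_p^\wedge]$ is injective. This reduces the problem to showing that $(\BP\langle n\rangle,\Sigma^{k-1}\BP\langle m\rangle)$ is preAP for every $k$, which is by definition the statement that $(\BP\langle n\rangle,\BP\langle m\rangle)$ is AP.

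The AP property is then supplied by Theorem \ref{thm:BPnAP}, provided $(m,n)$ satisfies the vanishing line hypothesis of Definition \ref{def:vanish}. Finally, Theorem \ref{thm:vanish} verifies the vanishing line hypothesis in exactly the cases claimed by the corollary: condition (i) covers odd $p$; condition (ii) covers $p=2$ with $n>m$ (since then $m\leq n-1$); and condition (iii) covers $p=2$ with $n=m\leq 2$. Chaining these three invocations gives the result.

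Since the genuinely substantive inputs — the vanishing line argument of Section \ref{sec:uniquepadic} and the construction in Section \ref{sec:uniqueplocal} of the classes $\phi^I$ producing enough endomorphisms to witness AP — have already been established, there is no real obstacle remaining; the corollary is a formal consequence. The only bookkeeping point worth noting is that the borderline triple $(p,m,n)=(2,3,3)$, for which Theorem \ref{thm:vanish} provides only the \emph{weak} vanishing line hypothesis and Theorem \ref{thm:BPnAP} accordingly yields only preAP (not AP) for the unshifted pair, is correctly excluded from the corollary: preAP of $(\BP\langle n\rangle,\BP\langle m\rangle)$ gives injectivity solely for $k=1$ and does not propagate to arbitrary suspensions.
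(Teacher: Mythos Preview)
Your proof is correct and follows exactly the approach the paper intends: the paper's own proof is the single line ``This follows from Theorem \ref{thm:BPnAP} and Proposition \ref{prop:fracture},'' and you have simply unpacked that citation chain (Theorem \ref{thm:vanish} $\Rightarrow$ vanishing line hypothesis $\Rightarrow$ Theorem \ref{thm:BPnAP} gives AP $\Rightarrow$ Proposition \ref{prop:fracture} gives injectivity). Your remark on why the case $(p,m,n)=(2,3,3)$ is excluded is a nice clarification but not strictly needed.
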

    \begin{proof}
        This follows from Theorem \ref{thm:BPnAP} and Proposition \ref{prop:fracture}.
    \end{proof}
    \begin{corollary}\label{cor:tmfsplit}
        When $p>3$, the spectrum of topological modular forms $\tmf_{(p)}$ splits into a direct sum of suspensions of $\BP\langle 2\rangle$.
    \end{corollary}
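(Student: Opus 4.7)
The plan is to reduce the statement to Corollary \ref{cor:uniqueplocal} applied with $n = 2$, using Rezk's cohomological computation as the key input.

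First, I would invoke \cite[Theorem 21.5]{Rezktmf}, which describes $H^\ast(\tmf;\mathbb F_p)$ as a left $\mathcal A$-module at each prime $p$, and repackage the answer at $p \geq 5$ as an explicit $\mathcal A$-module splitting
\[
    H^\ast(\tmf;\mathbb F_p) \simeq \bigoplus_{j=1}^k \Sigma^{2i_j}\bigl(\mathcal A//E(2)\bigr),\qquad 0 \leq i_1 \leq \cdots \leq i_k.
\]
At $p \geq 5$, $H^\ast(\tmf;\mathbb F_p)$ is expected to be free over $\mathcal A//E(2)$ on finitely many even-dimensional generators, since the stack of formal groups of elliptic curves simplifies considerably in that range and $\pi_\ast\tmf_{(p)}$ is torsion-free with generators whose degrees are the degrees of an $\mathbb F_p$-basis of the Hopf algebroid of the relevant formal group. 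The shifts $2i_j$ are read off from Rezk's table.

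Second, I would verify the numerical bound $i_k < p^3 - 1$ required by Corollary \ref{cor:uniqueplocal}. At $p = 5$ this demands all shifts $2i_j \leq 246$, which is easy to check against Rezk's table, and at larger primes $p$ the bound becomes even easier to meet since $2(p^3 - 1)$ grows quickly. Given that $H^\ast(\tmf;\mathbb F_p)$ is even-concentrated and the generating degrees are controlled by classical modular-forms data, this is a finite combinatorial check.

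With these two ingredients in place, set
\[
    Y := \bigoplus_{j=1}^k \Sigma^{2i_j}\BP\langle 2\rangle,
\]
so that $H^\ast(Y;\mathbb F_p) \simeq H^\ast(\tmf_{(p)};\mathbb F_p)$ as $\mathcal A$-modules by construction. Since $p \geq 5$ is odd, Corollary \ref{cor:uniqueplocal} lifts this cohomological isomorphism to an equivalence of spectra $\tmf_{(p)} \simeq Y$, which is the asserted splitting. The main obstacle is the very first step, the verification of the free splitting and the shift bounds from Rezk's computation, since this is where all input from the theory of topological modular forms enters; everything subsequent — the vanishing line, the AP property, and the recovery of $p$-local from $p$-complete homotopy type — has been packaged already into Theorem \ref{thm:vanish}, Theorem \ref{thm:BPnAP}, and Corollary \ref{cor:uniqueplocal}.
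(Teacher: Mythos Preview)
Your overall strategy matches the paper's, but there is a gap in your first step. You assert that Rezk's \cite[Theorem 21.5]{Rezktmf} can be ``repackaged'' as an explicit $\mathcal A$-module splitting
\[
    H^\ast(\tmf;\mathbb F_p) \simeq \bigoplus_{j=1}^k \Sigma^{2i_j}\bigl(\mathcal A//E(2)\bigr),
\]
but Rezk's computation only provides a \emph{filtration} of $H^\ast(\tmf;\mathbb F_p)$ whose associated graded is such a direct sum; it does not directly assert that the filtration splits as $\mathcal A$-modules. Your justification that the module is ``expected to be free over $\mathcal A//E(2)$'' is not an argument, and it is precisely this freeness that needs to be established.

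The paper closes this gap by observing that the extension classes governing whether the filtration splits live in groups of the form
\[
    \Ext^1_{\mathcal A}\bigl(\Sigma^{2i_a}\mathcal A//E(2),\,\Sigma^{2i_b}\mathcal A//E(2)\bigr)
\]
with $0 \leq 2i_a, 2i_b \leq 2(p^2+p-12)$, so the relevant topological degree $2i_b - 2i_a - 1$ is odd and strictly greater than $1 - 2p^3$. Theorem~\ref{thm:intro6} (the vanishing line for $n=m=2$) then forces these $\Ext^1$ groups to vanish, and the filtration splits. Only after this step does one have the $\mathcal A$-module isomorphism needed to invoke Corollary~\ref{cor:uniqueplocal}. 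Once you add this extension argument, your proof becomes identical to the paper's.
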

    \begin{proof}
        It is shown in \cite[Theorem 21.5]{Rezktmf} that $H^\ast(\tmf;\mathbb F_p)$ as an $\mathcal A$-module has a filtration whose associated graded is a sum of even suspensions of $H^\ast(\BP\langle 2\rangle;\mathbb F_p)$ with suspension degrees between $0$ and $2(p^2+p-12)$. However, the extension problems going from the associated graded to the filtered module is trivial since we have proved that all of the relevant $\Ext^1$ groups vanish. Therefore, $H^\ast(\tmf;\mathbb F_p)$ is a direct sum of small even suspensions of $H^\ast(\BP\langle2\rangle;\mathbb F_p)$ and we are done by the previous corollary.
    \end{proof}

    \bibliographystyle{amsalpha}
    \bibliography{ref}
\end{document}